\newtheorem{thm}{Theorem}
\newtheorem{lem}[thm]{Lemma}
\newtheorem{prop}[thm]{Proposition}
\newtheorem{thmintro}{Theorem}
\newcommand{\Z}{\mathbb Z}
\newcommand{\Q}{\mathbb Q}
\newcommand{\mc}{\mathcal}
\newcommand{\mr}{\mathrm}
\newcommand{\enuma}[1]{\begin{enumerate}[\textup{(}a\textup{)}] {#1} \end{enumerate}}
\newcommand{\red}{\mathrm{red}}
\begin{document}

\title[Conjugacy of Levi subgroups]{Conjugacy of Levi subgroups of reductive groups\\
and a generalization to linear algebraic groups}
\author{Maarten Solleveld}
\address{IMAPP, Radboud Universiteit \\
Heyendaalseweg 135, 6525AJ Nijmegen, the Netherlands}
\email{m.solleveld@science.ru.nl}
\date{\today}
\thanks{The author is supported by a NWO Vidi grant "A Hecke algebra approach to the 
local Langlands correspondence" (nr. 639.032.528).}
\subjclass[2010]{20G07,20G15}
\maketitle

\begin{abstract}
We investigate Levi subgroups of a connected reductive algebraic group $\mc G$,
over a ground field $K$. We parametrize their conjugacy classes in terms of sets of
simple roots and we prove that two Levi $K$-subgroups of $\mc G$ are rationally 
conjugate if and only if they are geometrically conjugate.

These results are generalized to arbitrary connected linear algebraic $K$-groups. 
In that setting the appropriate analogue of a Levi subgroup is derived from the
notion of a pseudo-parabolic subgroup.
\end{abstract}
\vspace{1cm}

\section{Introduction}

Let $\mc G$ be a connected reductive group over a field $K$. It is well-known that 
conjugacy classes of parabolic $K$-subgroups correspond bijectively to set of simple
roots (relative to $K$). Further, two parabolic $K$-subgroups are $\mc G (K)$-conjugate
if and only if they are conjugate by an element of $\mc G (\overline K)$. In other
words, rational and geometric conjugacy classes coincide.

By a Levi $K$-subgroup of $\mc G$ we mean a Levi factor of some parabolic $K$-subgroup 
of $\mc G$. Such groups play an important role in the representation theory of reductive 
groups, via parabolic induction.
Conjugacy of Levi subgroups, also known as association of parabolic
subgroups, has been studied less. Although their rational conjugacy classes are
known (see \cite[Proposition 1.3.4]{Cas}), it appears that so far these have not
been compared with geometric conjugacy classes.

Let $\Delta_K$ be the set of simple roots for $\mc G$ with respect to a maximal 
$K$-split torus $\mc S$. For every subset $I_K \subset \Delta_K$ there exists a standard
Levi $K$-subgroup $\mc L_{I_K}$. We will prove:

\begin{thmintro}\label{thm:A}
Let $\mc G$ be a connected reductive $K$-group. Every Levi $K$-subgroup of $\mc G$ is 
$\mc G (K)$-conjugate to a standard Levi $K$-subgroup. 

For two standard Levi $K$-subgroups $\mc L_{I_K}$ and $\mc L_{J_K}$
the following are equivalent:
\begin{itemize}
\item $I_K$ and $J_K$ are associate under the Weyl group $W(\mc G,\mc S)$;
\item $\mc L_{I_K}$ and $\mc L_{J_K}$ are $\mc G (K)$-conjugate;
\item $\mc L_{I_K}$ and $\mc L_{J_K}$ are $\mc G (\overline K)$-conjugate.
\end{itemize}
\end{thmintro}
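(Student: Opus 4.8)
\emph{Overall strategy and the first assertion.} The plan is to deduce everything from a short cycle of implications, with all of the genuine difficulty living in the implication ``$\mc G(\overline K)$-conjugate $\Longrightarrow$ associate''. For the opening sentence: if $\mc M$ is a Levi factor of a parabolic $K$-subgroup $\mc Q$, I would first use the classical result recalled above to conjugate $\mc Q$ by $\mc G(K)$ onto a standard parabolic $\mc P_{I_K}$, so that $\mc M$ becomes a Levi factor of $\mc P_{I_K}$; since any two Levi factors of $\mc P_{I_K}$ are conjugate by an element of $R_u(\mc P_{I_K})(K)$ (conjugacy of Levi decompositions of a parabolic subgroup, valid over an arbitrary field), $\mc M$ is then $\mc G(K)$-conjugate to $\mc L_{I_K}$. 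In particular all three conjugacy relations may be tested on standard Levi $K$-subgroups.

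\emph{The two easy implications.} That $\mc G(K)$-conjugate implies $\mc G(\overline K)$-conjugate is trivial. For ``associate $\Rightarrow$ $\mc G(K)$-conjugate'' I would use that $W(\mc G,\mc S)=N_{\mc G}(\mc S)(K)/Z_{\mc G}(\mc S)(K)$ and that $\mc L_{I_K}=Z_{\mc G}(\mc A_{I_K})$ with $\mc A_{I_K}=\bigl(\bigcap_{a\in I_K}\ker a\bigr)^{\circ}\subseteq\mc S$: if $n\in N_{\mc G}(\mc S)(K)$ represents $w\in W(\mc G,\mc S)$ with $w(I_K)=J_K$, then $n\mc L_{I_K}n^{-1}=Z_{\mc G}(w\mc A_{I_K})=Z_{\mc G}(\mc A_{J_K})=\mc L_{J_K}$. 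Reading the same computation backwards, and using the Borel--Tits conjugacy of maximal $K$-split tori inside $\mc L_{J_K}$, one also gets ``$\mc G(K)$-conjugate $\Rightarrow$ associate''; so the first two conditions are equivalent by elementary arguments.

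\textbf{The main point} is ``$\mc G(\overline K)$-conjugate $\Rightarrow$ associate''. Given $g\in\mc G(\overline K)$ with $g\mc L_{I_K}g^{-1}=\mc L_{J_K}$, fix a maximal torus $\mc T$ of $\mc G$ defined over $K$ with $\mc S\subseteq\mc T$; then $\mc T\subseteq Z_{\mc G}(\mc S)\subseteq\mc L_{I_K}\cap\mc L_{J_K}$, and as $\mc T$ and $g\mc T g^{-1}$ are maximal tori of $\mc L_{J_K}$, after replacing $g$ by $hg$ for a suitable $h\in\mc L_{J_K}(\overline K)$ I may assume $g\in N_{\mc G}(\mc T)(\overline K)$. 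Its image $w\in W(\mc G,\mc T)$ carries $\Phi(\mc L_{I_K},\mc T)$ onto $\Phi(\mc L_{J_K},\mc T)$, and after a further adjustment by an element of $W(\mc L_{J_K},\mc T)$ lifted into $\mc L_{J_K}(\overline K)$ (which preserves the property $g\mc L_{I_K}g^{-1}=\mc L_{J_K}$) I may assume $w(\tilde I)=\tilde J$, where $\tilde I,\tilde J\subseteq\Delta$ are the bases of $\Phi(\mc L_{I_K},\mc T)$, $\Phi(\mc L_{J_K},\mc T)$ contained in $\Delta$. These $\tilde I,\tilde J$ contain $\Delta_0$ (the simple roots vanishing on $\mc S$), are stable under the $*$-action of $\mr{Gal}(\overline K/K)$ on $\Delta$, and satisfy $I_K=\mr{res}_{\mc S}(\tilde I)\setminus\{0\}$ and likewise for $J_K$. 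Thus the theorem reduces to a combinatorial statement about the Tits index of $\mc G$: \emph{if two $*$-stable subsets of $\Delta$ containing $\Delta_0$ are $W(\mc G,\mc T)$-associate, then the corresponding subsets of $\Delta_K$ are $W(\mc G,\mc S)$-associate.}

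I expect this last statement to be the real obstacle. The natural route is to reduce first to $\mc G$ absolutely almost simple, by splitting $\Phi$ into $\mr{Gal}$-orbits of irreducible components and recognising each block as a Weil restriction $\mr{Res}_{L/K}\mc H$ with $\mc H$ absolutely almost simple over a finite separable extension $L/K$, for which Levi subgroups, standard ones and their conjugacy all translate to the corresponding notions for $\mc H$ over $L$; central isogenies do not affect the combinatorial data. For $\mc G$ absolutely almost simple, $\mr{Gal}(\overline K/K)$ acts on $(\Phi,\Delta)$ through a finite group $\bar\Gamma$ of diagram automorphisms, and the task becomes to choose $w$ with $w(\tilde I)=\tilde J$ that is $\bar\Gamma$-equivariant: for then $w$ descends to the quotient lattice $X^*(\mc S)$ of $X^*(\mc T)$ and sends $I_K$ onto $J_K$, as required. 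The obstruction to replacing an arbitrary such $w$ by a $\bar\Gamma$-equivariant one is the class in $H^1\bigl(\bar\Gamma,\mr{Stab}_{W(\mc G,\mc T)}(\tilde I)\bigr)$ of the cocycle $\sigma\mapsto w^{-1}\,{}^{\sigma}w$, and showing that it vanishes is where I anticipate the main work; this should follow from the facts that the cocycle actually arises from the element $g$ (equivalently, that the attached class in $H^1(K,N_{\mc G}(\mc L_{I_K}))$ maps to $0$ in $H^1(K,\mc G)$) together with the explicit classification of admissible Tits indices. A more uniform alternative, bypassing the case analysis, would be a direct Galois-descent argument on the cocycle $\sigma\mapsto g^{-1}\sigma(g)$, which takes values in $N_{\mc G}(\mc T)(\overline K)\cap N_{\mc G}(\mc L_{I_K})(\overline K)$ and which one would try to untwist so as to move $g$ into $\mc G(K)$ directly, using again Borel--Tits conjugacy of maximal $K$-split tori and the structure of $Z_{\mc G}(\mc S)$.
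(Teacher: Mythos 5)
Your treatment of the first assertion and of the equivalence ``associate $\Leftrightarrow$ $\mc G(K)$-conjugate'' matches the paper's Lemma \ref{lem:1} and is fine. The reduction of the remaining implication to the combinatorial statement about the Tits index is also the right first move. The problems are in your proposed attack on that combinatorial statement. First, the claim that a $\bar\Gamma$-equivariant $w\in W(\mc G,\mc T)$ ``descends to the quotient lattice $X^*(\mc S)$ and sends $I_K$ onto $J_K$'' is false outside the quasi-split case. The $*$-action ($\mu_{\mc B}$ in the paper) differs from the genuine Galois action on $X^*(\mc T)$ by inner automorphisms, so $\bar\Gamma$-equivariance for the $*$-action does not make $w$ stabilize the kernel of $X^*(\mc T)\to X^*(\mc S)$; the extreme case is an inner form of a split group, where $\bar\Gamma$ acts trivially on $\Delta$, every $w$ is ``equivariant'', and yet $W(\mc G,\mc S)$ is a proper subquotient of $W(\mc G,\mc T)$ (cf.\ $GL_m(D)$). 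Equivariance alone ignores $\Delta_0$ entirely, and the identity $W(\mc G,\mc S)=W(\mc G,\mc T)^{\Gamma_K}$ that your descent needs (equation \eqref{eq:7}) is only available when $\mc G$ is quasi-split. The paper gets around exactly this by a genuinely group-theoretic (not combinatorial) reduction to the quasi-split inner form: transfer $\mc L_I,\mc L_J$ through an inner twist $\psi$, check via Lemma \ref{lem:6} that the transferred groups are defined over $K$, conjugate them rationally in $\mc G^*$, and pull the resulting parabolic back using that rational and geometric conjugacy already agree for \emph{parabolic} subgroups (Lemma \ref{lem:7}).

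Second, even in the quasi-split case, the vanishing of your obstruction class in $H^1\bigl(\bar\Gamma,\mathrm{Stab}_{W(\mc G,\mc T)}(\tilde I)\bigr)$ is precisely the hard content of the theorem, and you have not proved it — you only ``anticipate'' it. The suggested justification (the class maps to $0$ in $H^1(K,\mc G)$) does not suffice: that map can have a large kernel, and nothing formal forces a $W(\mc G,\mc T)$-association between two $\Gamma$-stable subsets of $\Delta$ to be realized $\Gamma$-equivariantly. The paper establishes this by a case-by-case verification over the quasi-split absolutely simple types ${}^2A_n$, ${}^2D_n$, ${}^3D_4$, ${}^2E_6$ (Lemma \ref{lem:4}); your Weil-restriction reduction to the absolutely simple case is sound and parallels Lemma \ref{lem:3}, but the case analysis (or some substitute for it) is missing. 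As it stands the proposal is a correct framing plus the easy half; the two essential ingredients — the passage from general to quasi-split groups and the equivariant-association statement for quasi-split simple groups — are not supplied.
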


The first claim and the first equivalence are folklore and not hard to show. 
The meat of the theorem is the equivalence of $\mc G (K)$-conjugacy and 
$\mc G (\overline K)$-conjugacy, that is, of rational conjugacy and geometric conjugacy. 
Our proof of that equivalence involves reduction steps and a case-by-case analysis for 
quasi-split absolutely simple groups. It occupies Section \ref{sec:1} of the paper.\\

Our main result is a generalization of Theorem \ref{thm:A} to arbitrary connected linear 
algebraic groups. There we replace the notion of a Levi subgroup by that of a 
\emph{pseudo-Levi subgroup}. By definition, a pseudo-Levi $K$-subgroup of $\mc G$ is the 
intersection of two opposite pseudo-parabolic $K$-subgroups of $\mc G$. We refer to
\cite[\S 2.1]{CGP} and the start of Section \ref{sec:2} for more background.
For reductive groups, pseudo-Levi subgroups are the same as Levi subgroups. When $\mc G$ 
does not admit a Levi decomposition, these pseudo-Levi subgroups are the best analogues. 
In the representation theory of pseudo-reductive groups over local fields (of positive
characteristic), these pseudo-Levi subgroups play a key role \cite[\S 4.1]{Sol}.

We prove that Theorem \ref{thm:A} has a natural analogue in the "pseudo"-setting:

\begin{thmintro}\label{thm:B}
Let $\mc G$ be a connected linear algebraic $K$-group. Every pseudo-Levi $K$-subgroup of 
$\mc G$ is $\mc G (K)$-conjugate to a standard pseudo-Levi $K$-subgroup. 

For two standard pseudo-Levi $K$-subgroups $\mc L_{I_K}$ and $\mc L_{J_K}$
the following are equivalent:
\begin{itemize}
\item $I_K$ and $J_K$ are associate under the Weyl group $W(\mc G,\mc S)$;
\item $\mc L_{I_K}$ and $\mc L_{J_K}$ are $\mc G (K)$-conjugate;
\item $\mc L_{I_K}$ and $\mc L_{J_K}$ are $\mc G (\overline K)$-conjugate.
\end{itemize}
\end{thmintro}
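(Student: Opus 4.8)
The plan is to deduce Theorem~\ref{thm:B} from Theorem~\ref{thm:A} by the two basic reduction devices in the theory of pseudo-reductive groups: passage to the maximal pseudo-reductive quotient, and passage to the reductive quotient over the minimal field of definition of the geometric unipotent radical. As in Theorem~\ref{thm:A}, the first claim and the first equivalence are folklore: with the theory of pseudo-parabolic subgroups of a connected linear algebraic $K$-group from \cite[\S 2--3]{CGP} --- maximal $K$-split tori $\mc S$ are $\mc G(K)$-conjugate; the standard pseudo-parabolic $K$-subgroups (those containing a fixed minimal one $\mc P_\emptyset\supseteq\mc S$) are parametrised by subsets of a basis $\Delta_K$ of $\Phi(\mc G,\mc S)$; two pseudo-parabolic $K$-subgroups are $\mc G(K)$-conjugate if and only if their types agree; and $N_{\mc G}(\mc S)(K)/Z_{\mc G}(\mc S)(K)$ acts on $\Phi(\mc G,\mc S)$ as $W(\mc G,\mc S)$ --- the argument of \cite[Proposition~1.3.4]{Cas} goes through verbatim with ``parabolic'' replaced by ``pseudo-parabolic'', and yields that every pseudo-Levi $K$-subgroup is $\mc G(K)$-conjugate to a standard one and that $\mc L_{I_K},\mc L_{J_K}$ are $\mc G(K)$-conjugate if and only if $I_K,J_K$ are $W(\mc G,\mc S)$-associate (a conjugator may be modified on each side by elements of the two pseudo-Levi subgroups so as to normalise $\mc S$). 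So the content is the equivalence of $\mc G(K)$- and $\mc G(\overline K)$-conjugacy.

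First I would reduce to $\mc G$ pseudo-reductive. By construction $\mc R_{u,K}(\mc G)$ is contained in every pseudo-parabolic $K$-subgroup of $\mc G$, hence in every pseudo-Levi $K$-subgroup, and $\pi\colon\mc G\to\mc G/\mc R_{u,K}(\mc G)=:\mc G_{\mathrm{pred}}$ is surjective with smooth connected kernel. Thus $H\mapsto\pi^{-1}(H)$ identifies the (standard) pseudo-Levi $K$-subgroups of $\mc G_{\mathrm{pred}}$ with those of $\mc G$, and since a subgroup containing $\ker\pi$ --- or, over $\overline K$, $\ker\pi_{\overline K}$ --- is the preimage of its image, a conjugator over $K$, or over $\overline K$, of two standard pseudo-Levi subgroups of $\mc G_{\mathrm{pred}}$ lifts to one for the corresponding pseudo-Levi subgroups of $\mc G$, and conversely. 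So we may assume $\mc G$ pseudo-reductive.

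Let then $\mc G$ be pseudo-reductive over $K$, let $K'/K$ be the minimal field of definition of $\mc R_u(\mc G_{\overline K})\subseteq\mc G_{\overline K}$ --- finite and purely inseparable, so $\overline{K'}=\overline K$ --- and put $\mc G':=(\mc G_{K'})^{\mr{red}}$, a connected reductive $K'$-group. From \cite[\S 2.2--2.3]{CGP} I would take that the canonical homomorphism $\mc G\to\mathrm R_{K'/K}(\mc G')$ sends a maximal $K$-split torus $\mc S\subseteq\mc G$ to a maximal $K'$-split torus $\mc S'\subseteq\mc G'$, identifies $W(\mc G,\mc S)$ with $W(\mc G',\mc S')$ compatibly with the parametrising sets $\Delta_K$ and $\Delta_{K'}$, and matches the standard pseudo-parabolic, resp.\ pseudo-Levi, $K$-subgroups of $\mc G$ with the standard parabolic, resp.\ Levi, $K'$-subgroups of $\mc G'$, preserving types. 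Then, on one hand, $\mc L_{I_K}$ and $\mc L_{J_K}$ are $\mc G(K)$-conjugate $\iff$ $I_K,J_K$ are $W(\mc G,\mc S)$-associate $\iff$ the corresponding standard Levi $K'$-subgroups of $\mc G'$ are $\mc G'(K')$-conjugate; on the other hand, applying the previous paragraph over $\overline K$ to the smooth connected group $\mc G_{\overline K}$ --- whose geometric unipotent radical is $\mc R_u(\mc G_{\overline K})$, with reductive quotient $\mc G'_{\overline K}$ --- shows that $\mc L_{I_K}$ and $\mc L_{J_K}$ are $\mc G(\overline K)$-conjugate $\iff$ their images in $\mc G'_{\overline K}$ are $\mc G'(\overline K)$-conjugate $\iff$ the corresponding standard Levi $K'$-subgroups of $\mc G'$ are $\mc G'(\overline{K'})$-conjugate. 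The equivalence of the two right-hand conditions is exactly Theorem~\ref{thm:A} for the reductive $K'$-group $\mc G'$, which finishes the proof.

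The main obstacle is this last step: pinning down, with the precision needed here, the dictionary between the pseudo-parabolic combinatorics of $\mc G$ over $K$ and the parabolic combinatorics of $\mc G'$ over $K'$ --- especially the identity $W(\mc G,\mc S)=W(\mc G',\mc S')$ of relative Weyl groups and the preservation of the ``types'' of (pseudo-)Levi subgroups under base change to $\overline K$. One must also be careful about the non-reduced pseudo-reductive groups occurring in characteristic $2$, for which $\Phi(\mc G,\mc S)$ may be non-reduced; but a non-reduced root system and the associated reduced one have the same set of simple roots up to rescaling and the same Weyl group, so the parametrisation of (pseudo-)parabolic subgroups and the association relation on subsets of the basis are unchanged, and the argument goes through unaltered.
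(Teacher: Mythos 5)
Your first two steps are sound and agree with the paper: the first claim and the first equivalence are the paper's Lemma \ref{lem:2.2}, and the passage to $\mc G/\mc R_{u,K}(\mc G)$ is the paper's Lemma \ref{lem:2.7}. The proof stands or falls with your third step, the ``dictionary'' between the relative combinatorics of $\mc G$ over $K$ and of $\mc G'=(\mc G_{K'})^{\mr{red}}$ over $K'$, and this is a genuine gap, not just a reference to be pinned down. What \cite[\S 2.2--2.3]{CGP} gives is a comparison between $\mc G_{K'}$ and $\mc G'$ \emph{over the same field} $K'$; what you need is a comparison between $\mc G$ over $K$ and $\mc G_{K'}$ over $K'$, across the purely inseparable extension $K'/K$. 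The paper explicitly warns that pseudo-parabolicity and pseudo-Levi-ness are \emph{not} preserved under inseparable base change. Concretely: (i) it is not automatic that the image of a maximal $K$-split torus $\mc S$ of $\mc G$ is a maximal $K'$-split torus of $\mc G'$, i.e.\ that the $K$-rank does not jump; (ii) even granting that, you need $\Delta_0(\mc G/K)=\Delta_0(\mc G'/K')$ and $W(\mc G,\mc S)=W(\mc G',\mc S')$ compatibly with $\Delta_K=\Delta_{K'}$, i.e.\ that $\mc G$ and $\mc G'$ have the same \emph{index}. This would force the index of every pseudo-reductive group to be the index of a reductive group of the same absolute type, which is exactly what fails in the hard cases: for absolutely pseudo-simple $\mc G$ of absolute type $B_n$ in characteristic $2$ with $[K:K^2]>4$, the anisotropic kernel $\Delta_0$ can exhibit the ``evenly spaced'' $GL_m(D)$-pattern inside the long-root $A$-subdiagram (this is what the analysis in the proof of Proposition \ref{prop:2.8}, case (iii), has to allow for), whereas the index of a reductive group of type $B_n$ over any field is always a connected tail containing the short root. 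So for such $\mc G$ the asserted matching of types and relative Weyl groups with the reductive $B_n$-group $\mc G'$ over $K'$ cannot hold, and Theorem \ref{thm:A} applied to $\mc G'$ does not see the relevant association classes. A smaller point: your equivalence ``$\mc L_{I_K},\mc L_{J_K}$ are $\mc G(\overline K)$-conjugate $\iff$ their images in $\mc G'_{\overline K}$ are conjugate'' is only justified in the forward direction, since $(\mc L_{I_K})_{\overline K}$ does not contain $\mc R_u(\mc G_{\overline K})$ and hence is not the preimage of its image; fortunately only the forward direction is needed.

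For comparison, the paper does not attempt any such global dictionary. It reduces to absolutely pseudo-simple groups with trivial centre (Lemma \ref{lem:2.6}), proves the quasi-split case by comparing with an auxiliary quasi-split \emph{reductive} group over $K$ with the same root datum and Galois action (Proposition \ref{prop:2.3}), handles the groups admitting a quasi-split pseudo-inner form by a twisting argument modelled on the reductive case (Lemmas \ref{lem:2.4}--\ref{lem:2.5} and Proposition \ref{prop:2.8}(i)--(ii)), and then deals with the residual types $B_n,C_n,BC_n$ in characteristic $2$ with $[K:K^2]>4$ --- where no quasi-split pseudo-inner form need exist --- by a separate combinatorial argument that realizes the relevant piece of the index as that of an inner form of $GL_n$ over $\Q$. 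Your approach, as it stands, skips precisely the part of the theorem that is hard.
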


Our arguments rely mainly on the structure theory of linear algebraic groups and 
pseudo-reductive groups developed by Conrad, Gabber and Prasad \cite{CGP,CP}. 
The first claim and the first equivalence are quickly dealt with in Lemma \ref{lem:2.2}. 
Like for reductive groups, the hard part is the equivalence of rational and geometric conjugacy. 
The proof of that constitutes the larger part of Section \ref{sec:2}, from Theorem
\ref{thm:2.7} onwards. We make use of Theorem \ref{thm:A} and of deep classification results
about absolutely pseudo-simple groups \cite{CP}.\\

\textbf{Acknowledgements.}

We thank Jean-Loup Waldspurger for explaining us important steps in the proof of Theorem
\ref{thm:A} and Gopal Prasad for pointing out some subtleties in \cite{CGP}.
\vspace{5mm}

\section{Connected reductive groups}
\label{sec:1}

Let $K$ be field with an algebraic closure $\overline K$ and a separable closure 
$K_s \subset \overline K$. Let $\Gamma_K$ be the Galois group of $K_s / K$.

Let $\mc G$ be a connected reductive $K$-group. Let $\mc T$ be a maximal torus 
of $\mc G$ with character lattice $X^* (\mc T)$. let $\Phi (\mc G, \mc T) \subset X^* (\mc T)$ 
be the associated root system. We also fix a Borel subgroup $\mc B$ of $\mc G$ 
containing $\mc T$, which determines a basis $\Delta$ of $\Phi (\mc G, \mc T)$. For 
every $\gamma \in \Gamma_K$ there exists a $g_\gamma \in \mc G (K_s)$ such that
\[
g_\gamma \gamma (\mc T) g_\gamma^{-1} = \mc T \quad \text{and} \quad
g_\gamma \gamma (\mc B) g_\gamma^{-1} = \mc B . 
\]
One defines the $\mu$-action of $\Gamma_K$ on $\mc T$ by
\begin{equation}\label{eq:1.2}
\mu_{\mc B}(\gamma) (t) = \mr{Ad}(g_\gamma) \circ \gamma (t) .
\end{equation}
This also determines an action $\mu_{\mc B}$ of $\Gamma_K$ on $\Phi (\mc G,\mc T)$,
which stabilizes $\Delta$.

Let $\mc S$ be a maximal $K$-split torus in $\mc G$. By \cite[Theorem 13.3.6.(i)]{Spr} 
applied to $Z_{\mc G}(\mc S)$, we may assume that $\mc T$ is defined over $K$ and 
contains $\mc S$. Then $Z_{\mc G}(\mc S)$ is a minimal $K$-Levi subgroup of $\mc G$. Let
\[
\Delta_0 := \{ \alpha \in \Delta : \mc S \subset \ker \alpha \}
\]
be the set of simple roots of $(Z_{\mc G}(\mc S), \mc T)$. It is known that $\Delta_0$
is stable under $\mu_{\mc B}(\Gamma_K)$ \cite[Proposition 15.5.3.i]{Spr}, so $\mu_{\mc B}$ 
can be regarded as a group homomorphism $\Gamma_K \to \mr{Aut}(\Delta,\Delta_0)$. The 
triple $(\Delta ,\Delta_0, \mu_{\mc B})$ is called the index of $\mc G$ \cite[\S 15.5.5]{Spr}.

Recall from \cite[Lemma 15.3.1]{Spr} that the root system $\Phi (\mc G, \mc S)$ is the 
image of $\Phi (\mc G, \mc T)$ in $X^* (\mc S)$, without 0. The set of simple roots $\Delta_K$ 
of $(\mc G, \mc S)$ can be identified with $(\Delta \setminus \Delta_0 ) / \mu_{\mc B}(\Gamma_K)$.
The Weyl group of $(\mc G, \mc S)$ can be expressed in various ways:
\begin{equation}\label{eq:1.1}
\begin{aligned}
W(\mc G,\mc S) & = N_{\mc G}(\mc S) / Z_{\mc G}(\mc S) \cong
N_{\mc G (K)}(\mc S(K)) / Z_{\mc G (K)}(\mc S (K)) \\
& \cong N_{\mc G}(\mc S,\mc T) / N_{Z_{\mc G}(\mc S)}(\mc T) =
\big( N_{\mc G}(\mc S,\mc T) / \mc T \big) \big/ \big( N_{Z_{\mc G}(\mc S)}(\mc T) / \mc T \big) \\
& \cong \mr{Stab}_{W(\mc G,\mc T)} (\mc S) / W(Z_{\mc G}(\mc S), \mc T) . 
\end{aligned}
\end{equation}
Let $\mc P_{\Delta_0} = Z_{\mc G}(\mc S) \mc B$ the minimal parabolic $K$-subgroup of $\mc G$ 
associated to $\Delta_0$. It is well-known \cite[Theorem 15.4.6]{Spr} 
that the following sets are canonically in bijection:
\begin{itemize}
\item $\mc G (K)$-conjugacy classes of parabolic $K$-subgroups of $\mc G$;
\item standard (i.e. containing $\mc P_{\Delta_0}$) parabolic $K$-subgroups of $\mc G$;
\item subsets of $(\Delta \setminus \Delta_0 ) /  \mu_{\mc B}(\Gamma_K)$;
\item $ \mu_{\mc B}(\Gamma_K)$-stable subsets of $\Delta$ containing $\Delta_0$.
\end{itemize}
Comparing these criteria over $K$ and over $\overline K$, we see that two parabolic $K$-subgroups
of $\mc G$ are $\mc G (K)$-conjugate if and only if they are $\mc G (\overline K)$-conjugate.

By a parabolic pair for $\mc G$ we mean a pair $(\mc P,\mc L)$, where $\mc L \subset \mc P$ is
a parabolic subgroup and $\mc L$ is a Levi factor of $\mc P$. We say that the pair is defined
over $K$ if both $\mc P$ and $\mc L$ are so.
By a Levi subgroup of $\mc G$ we mean a Levi factor of some parabolic subgroup of $\mc G$.
Equivalently, a Levi $K$-subgroup of $\mc G$ is the centralizer of a $K$-split torus in $\mc G$.

With \cite[Lemma 15.4.5]{Spr} every $\mu_{\mc B}(\Gamma_K)$-stable subset $I \subset \Delta$ 
containing $\Delta_0$ gives rise to a standard Levi $K$-subgroup $\mc L_I$ of $\mc G$, 
namely the group generated by $Z_{\mc G}(\mc S)$ and the root subgroups for roots in
$\Z I \cap \Phi (\mc G,\mc T)$. By construction $\mc L_I$ is a Levi factor of the standard
parabolic $K$-subgroup $\mc P_I$ of $\mc G$. In the introduction we denoted $\mc L_I$ by
$\mc L_{I_K}$, where $I_K = (I \setminus \Delta_0) / \mu_{\mc B}(\Gamma_K)$.

Two parabolic $K$-subgroups of $\mc G$ are called associate if their Levi factors are 
$\mc G (K)$-conjugate. As Levi factors are unique up to conjugation (see the proof of Lemma 
\ref{lem:1}.a below), there is a natural bijection between the set of $\mc G (K)$-conjugacy 
classes of Levi $K$-subgroups of $\mc G$ and the set of association classes of parabolic
$K$-subgroups of $\mc G$. The explicit description of these sets is known, for instance from
\cite[Proposition 1.3.4]{Cas}. Unfortunately we could not find a complete proof of these 
statements in the literature, so we provide it here.

\begin{lem}\label{lem:1}
\enuma{
\item Every Levi $K$-subgroup of $\mc G$ is $\mc G (K)$-conjugate to a standard Levi
$K$-subgroup of $\mc G$.
\item For two standard Levi $K$-subgroups $\mc L_I$ and $\mc L_J$ the following are
equivalent:
\begin{enumerate}[(i)]
\item $\mc L_I$ and $\mc L_J$ are $\mc G (K)$-conjugate;
\item $(I \setminus \Delta_0) / \mu_{\mc B}(\Gamma_K)$ and 
$(J \setminus \Delta_0) / \mu_{\mc B}(\Gamma_K)$ are $W (\mc G, \mc S)$-associate.
\end{enumerate} }
\end{lem}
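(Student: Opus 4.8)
The plan is to reduce both parts to the familiar combinatorics of standard Levi $K$-subgroups, together with two standard facts: maximal $K$-split tori of any connected reductive $K$-group are conjugate under its rational points, and every element of $W(\mc G,\mc S)$ has a representative in $\mc G(K)$ normalizing $\mc S$ (see \eqref{eq:1.1}). Throughout, write $I_K := (I \setminus \Delta_0)/\mu_{\mc B}(\Gamma_K)$ and $\mc S_{I_K} := \bigl( \bigcap_{\alpha \in I_K} \ker \alpha \bigr)^{\circ} \subseteq \mc S$, so that $\mc L_I = Z_{\mc G}(\mc S_{I_K})$ and $\Phi(\mc L_I,\mc S) = \Z I_K \cap \Phi(\mc G,\mc S)$; likewise for $J$.

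For part (a), recall (as noted in the excerpt) that a Levi $K$-subgroup of $\mc G$ has the form $Z_{\mc G}(\mc A)$ for some $K$-split torus $\mc A$. Since $\mc A$ lies in a maximal $K$-split torus and these are all $\mc G(K)$-conjugate, after conjugating by an element of $\mc G(K)$ I may assume $\mc A \subseteq \mc S$. Next I would pick a cocharacter $\mu \in X_*(\mc A) \subseteq X_*(\mc S)$ that is \emph{generic in} $\mc A$, meaning that for every $\alpha \in \Phi(\mc G,\mc S)$ one has $\langle \alpha,\mu \rangle = 0$ exactly when $\alpha|_{\mc A} = 0$; this is possible because $\mu$ need only avoid the finitely many hyperplanes $\ker(\alpha|_{\mc A}) \subset X_*(\mc A) \otimes \R$. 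Then $Z_{\mc G}(\mu) = Z_{\mc G}(\mc A)$. Choosing $w \in W(\mc G,\mc S)$ with $w\mu$ dominant for $\Delta_K$ and lifting $w$ to some $n \in \mc G(K)$ normalizing $\mc S$, conjugation by $n$ carries $Z_{\mc G}(\mc A) = Z_{\mc G}(\mu)$ onto $Z_{\mc G}(w\mu)$. Since $w\mu$ is dominant, the roots in $\Phi(\mc G,\mc S)$ vanishing on $w\mu$ are precisely those lying in $\Z I_K$, where $I_K := \{ \alpha \in \Delta_K : \langle \alpha,w\mu \rangle = 0 \}$; as $Z_{\mc G}(w\mu)$ contains $Z_{\mc G}(\mc S)$ and has relative root system $\Z I_K \cap \Phi(\mc G,\mc S)$, it coincides with the standard Levi $K$-subgroup $\mc L_I$ for which $(I \setminus \Delta_0)/\mu_{\mc B}(\Gamma_K) = I_K$ (cf.\ \cite[\S 15.4]{Spr}). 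This proves (a). (One could instead reduce to a standard parabolic $\mc P_I$ and invoke the conjugacy of Levi $K$-factors of a parabolic $K$-subgroup, but the argument above does not need this input.)

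For part (b), the implication (ii) $\Rightarrow$ (i) is immediate: given $w \in W(\mc G,\mc S)$ with $w(I_K) = J_K$, a lift $n \in \mc G(K)$ of $w$ normalizing $\mc S$ sends $\ker \alpha$ to $\ker(w\alpha)$ for each $\alpha \in I_K$, hence sends $\mc S_{I_K}$ to $\mc S_{w(I_K)} = \mc S_{J_K}$, so $n \mc L_I n^{-1} = Z_{\mc G}(\mc S_{J_K}) = \mc L_J$. For (i) $\Rightarrow$ (ii), suppose $g \in \mc G(K)$ satisfies $g \mc L_I g^{-1} = \mc L_J$. Both $\mc L_I$ and $\mc L_J$ contain $\mc S$, so $\mc S$ and $g^{-1}\mc S g$ are two maximal $K$-split tori of $\mc L_I$; after right-multiplying $g$ by a suitable element of $\mc L_I(K)$ (which does not affect $g\mc L_I g^{-1} = \mc L_J$) I may assume $g$ normalizes $\mc S$, so that $g$ represents an element $w \in W(\mc G,\mc S)$. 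Conjugation by $g$ is then an isomorphism $\mc L_I \to \mc L_J$ fixing $\mc S$, so it carries $\Phi(\mc L_I,\mc S) = \Z I_K \cap \Phi(\mc G,\mc S)$ onto $\Phi(\mc L_J,\mc S) = \Z J_K \cap \Phi(\mc G,\mc S)$; that is, $w$ maps the first of these sub-root systems onto the second. Hence $w(I_K)$ and $J_K$ are both bases of the root system $\Phi(\mc L_J,\mc S)$, so there is $w'$ in its Weyl group $W(\mc L_J,\mc S) \subseteq W(\mc G,\mc S)$ with $w'(w(I_K)) = J_K$; then $w'w \in W(\mc G,\mc S)$ carries $I_K$ to $J_K$, which is (ii).

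I do not expect a genuine obstacle --- as the introduction observes, this is the folklore part of Theorem \ref{thm:A} --- but two points deserve a little care. First, in (a), the identification of $Z_{\mc G}(w\mu)$ with the standard Levi subgroup $\mc L_I$: this is the relative-root-system bookkeeping of \cite[\S 15.4]{Spr}, matching $\mu_{\mc B}(\Gamma_K)$-stable standard subsystems of $(\mc G,\mc T)$ with standard subsystems of $(\mc G,\mc S)$. Second, in (i) $\Rightarrow$ (ii), the reduction to the case that $g$ normalizes $\mc S$, after which one must check that conjugation by $g$ genuinely restricts to a bijection $\Phi(\mc L_I,\mc S) \to \Phi(\mc L_J,\mc S)$ intertwining the action of $g$ on $\mc S$.
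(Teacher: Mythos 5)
Your part (b) is essentially the paper's own argument: the same lift of $w$ to $N_{\mc G(K)}(\mc S(K))$ for (ii)$\Rightarrow$(i), and the same adjustment of $g$ by an element of a standard Levi's rational points (the paper multiplies on the left by $l \in \mc L_J(K)$, you on the right by an element of $\mc L_I(K)$ --- equivalent) followed by the "two bases of $\Phi(\mc L_J,\mc S)$ are Weyl-associate" step. Part (a), however, takes a genuinely different route. The paper conjugates the ambient parabolic $\mc P$ to a standard $\mc P_I$ and then invokes \cite[Proposition 16.1.1]{Spr} to see that all $K$-rational Levi factors of $\mc P_I$ are $\mc P_I(K)$-conjugate; you instead start from the characterization of a Levi $K$-subgroup as $Z_{\mc G}(\mc A)$ for a $K$-split torus $\mc A$, conjugate $\mc A$ into $\mc S$ via \cite[Theorem 15.2.6]{Spr}, replace $\mc A$ by a generic cocharacter, and move that cocharacter into the dominant cone by $W(\mc G,\mc S)$. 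Both arguments are correct. Yours is more self-contained on the parabolic side (it never mentions $\mc P$) but leans on the torus-centralizer characterization, which the paper asserts without proof and whose usual justification runs through the very conjugacy statement for Levi factors of a parabolic; the paper's route has the additional payoff that the $\mc P_I(K)$-conjugacy of Levi factors of $\mc P_I$ is set up explicitly, since it is reused later in the proof of Lemma \ref{lem:7}. The only points in your write-up that deserve the care you already flag are the existence of a cocharacter of $X_*(\mc A)$ avoiding the finitely many proper sublattices $\ker(\alpha|_{X_*(\mc A)})$ (fine, as these have infinite index) and the identification $Z_{\mc G}(w\mu)=\mc L_I$ for dominant $w\mu$, which is the standard positivity argument that a nonnegative integer combination of simple roots pairing to zero with a dominant cocharacter is supported on the simple roots orthogonal to it.
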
 
\begin{proof}
(a) Let $\mc P$ be a parabolic $K$-subgroup of $\mc G$ with a Levi factor $\mc L$
defined over $K$. Since $\mc P$ is $\mc G (K)$-conjugate to a standard parabolic
subgroup $\mc P_I$ \cite[Theorem  15.4.6]{Spr}, $\mc L$ is $\mc G (K)$-conjugate
to a Levi factor of $\mc P_I$. By \cite[Proposition 16.1.1]{Spr} any two such factors
are conjugate by an element of $\mc P_I (K)$. In particular $\mc L$ is 
$\mc G (K)$-conjugate to $\mc L_I$.\\
(b) Suppose that (ii) is fulfilled, that is, 
\[
w(I \setminus \Delta_0) /  \mu_{\mc B}(\Gamma_K) = (J \setminus \Delta_0) /  
\mu_{\mc B}(\Gamma_K) \quad \text{for some } w \in W(\mc G,\mc S) .
\]
Let $\bar w \in N_{\mc G (K)}(\mc S(K))$ be a lift of $w$. Then $\bar w \mc L_I {\bar w}^{-1}$
contains $Z_{\mc G} (\mc S)$ and 
\[
\Phi (\bar w \mc L_I {\bar w}^{-1} ,\mc S) = w \Phi (\mc L_I,\mc S) = \Phi (\mc L_J ,\mc S) .
\]
Hence $\bar w \mc L_I {\bar w}^{-1} = \mc L_J$, showing that (i) holds.

Conversely, suppose that (ii) holds, so $g \mc L_I g^{-1} = \mc L_J$ for some $g \in 
\mc G (K)$. Then $g \mc S g^{-1}$ is a maximal $K$-split torus of $\mc L_J$.
By \cite[Theorem 15.2.6]{Spr} there is a $l \in \mc L_J (K)$ such that 
$l g \mc S g^{-1} l^{-1} = \mc S$. Thus $(lg) \mc L_I (l g)^{-1} = \mc L_J$ and
$lg \in N_{\mc G}(\mc S)$. Let $w_1$ be the image of $l g$ in $W(\mc G,\mc S)$. Then 
$w_1 (\Phi (\mc L_I,\mc S)) = \Phi (\mc L_J,\mc S)$, so $w_1 \big( (I \setminus \Delta_0) 
/ \mu_{\mc B}(\Gamma_K) \big)$ is a basis of $\Phi (\mc L_J,\mc S)$. Any two bases 
of a root system are associate under its Weyl group, so there exists a 
$w_2 \in W(\mc L_J,\mc S) \subset W(\mc G,\mc S)$ such that 
\[
w_2 w_1 \big( (I \setminus \Delta_0) / \mu_{\mc B}(\Gamma_K) \big) = 
(J \setminus \Delta_0) / \mu_{\mc B}(\Gamma_K) . \qedhere
\]
\end{proof}

When $\mc G$ is $K$-split, $\Delta_0$ is empty and the action of $\Gamma_K$ is trivial.
Then Lemma \ref{lem:1} says that $\mc L_I$ and $\mc L_J$ are $\mc G (K)$-conjugate
if and only if $I$ and $J$ are $W(\mc G,\mc T)$-associate. With $\overline K$ instead of
$K$ we would obtain the same criterion. In particular $\mc L_I$ and $\mc L_J$ are
$\mc G (K)$-conjugate if and only if they are $\mc G (\overline K)$-conjugate.\\

We want to prove that rational conjugacy and geometric conjugacy of Levi subgroups
is equivalent. More precisely:

\begin{thm}\label{thm:2}
Let $\mc L, \mc L'$ be two Levi $K$-subgroups of $\mc G$. Then $\mc L$ and $\mc L'$
are $\mc G(K)$-conjugate if and only if they are $\mc G (\overline K)$-conjugate.
\end{thm}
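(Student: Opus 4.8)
The plan is to reduce Theorem \ref{thm:2} to the case of standard Levi subgroups handled by Lemma \ref{lem:1}, and then to the case of quasi-split absolutely simple groups, where a case-by-case analysis using the classification of indices (Tits indices) becomes feasible. By Lemma \ref{lem:1}.a both $\mc L$ and $\mc L'$ are $\mc G(K)$-conjugate to standard Levi $K$-subgroups, so we may assume $\mc L = \mc L_I$ and $\mc L' = \mc L_J$ for $\mu_{\mc B}(\Gamma_K)$-stable subsets $I, J \subset \Delta$ containing $\Delta_0$. One direction is trivial: $\mc G(K)$-conjugacy implies $\mc G(\overline K)$-conjugacy. For the converse, by Lemma \ref{lem:1}.b (applied over $\overline K$, where the $\Gamma$-action is trivial and $\Delta_0 = \emptyset$), $\mc G(\overline K)$-conjugacy of $\mc L_I$ and $\mc L_J$ is equivalent to $I$ and $J$ being $W(\mc G,\mc T)$-associate. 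So the heart of the matter is the implication: if $I, J$ are $\mu_{\mc B}(\Gamma_K)$-stable, contain $\Delta_0$, and are $W(\mc G,\mc T)$-associate, then $(I \setminus \Delta_0)/\mu_{\mc B}(\Gamma_K)$ and $(J \setminus \Delta_0)/\mu_{\mc B}(\Gamma_K)$ are $W(\mc G,\mc S)$-associate. Equivalently, I must show that a geometric conjugating element can be adjusted, using only the stabilizer of $\mc S$ in $W(\mc G,\mc T)$ modulo $W(Z_{\mc G}(\mc S),\mc T)$ — which by \eqref{eq:1.1} is exactly $W(\mc G,\mc S)$ — to carry $I$ to $J$.

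Next I would perform standard reductions on $\mc G$. Since everything depends only on the index $(\Delta,\Delta_0,\mu_{\mc B})$, on the root datum, and on the relevant Weyl groups, I can replace $\mc G$ by its adjoint quotient (or, if convenient for the Weyl-group argument, by a $z$-extension or the derived group), reducing to the case where $\mc G$ is semisimple adjoint. Then $\mc G$ decomposes, over $K$, as a product of $\Gamma_K$-orbits of its almost-simple factors; by Weil restriction each such factor is of the form $R_{L/K}(\mc H)$ for a finite separable extension $L/K$ and an absolutely simple $L$-group $\mc H$, and Levi subgroups, parabolics, indices and the two Weyl groups all behave compatibly with these operations. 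This reduces the problem to $\mc G$ absolutely simple over $K$. A further reduction replaces $\mc G$ by its quasi-split inner form $\mc G^*$: inner twisting does not change $(\Delta,\Delta_0,\mu_{\mc B})$ away from... more precisely, one checks that $W(\mc G,\mc S)$ as a subquotient of $W(\mc G,\mc T)$ together with its action on $\Delta \setminus \Delta_0$ depends only on the $*$-action $\mu_{\mc B}$ and on $\Delta_0$, hence is the same for $\mc G$ and for the quasi-split form with the same index; so it suffices to treat $\mc G$ quasi-split and absolutely simple.

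In the quasi-split absolutely simple case, $\Delta_0 = \emptyset$ and $\mu_{\mc B} \colon \Gamma_K \to \mr{Aut}(\Delta)$ is a homomorphism into the (cyclic, of order $\le 3$, or trivial) group of diagram automorphisms; $\Delta_K = \Delta/\mu_{\mc B}(\Gamma_K)$ and $W(\mc G,\mc S) = W(\mc G,\mc T)^{\mu_{\mc B}(\Gamma_K)}$. So the concrete statement to prove is: for $\Phi$ an irreducible root system with basis $\Delta$ and $\sigma$ a diagram automorphism, if two $\sigma$-stable subsets $I, J \subset \Delta$ are $W(\Phi)$-associate, then they are $W(\Phi)^\sigma$-associate. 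When $\sigma = \mr{id}$ there is nothing to prove. The nontrivial cases are $A_n$ with $\sigma$ the flip, $D_n$ with $\sigma$ the flip (and $D_4$ with $\sigma$ of order $3$), and $E_6$ with its order-$2$ automorphism; these give $\mc G$ of type $A, B/C, D$ or $F_4$ relative to $K$. I would go through each: classify $\sigma$-stable subsets $I$ up to $W(\Phi)$-association, and within each $W(\Phi)$-association class verify that $W(\Phi)^\sigma$ already acts transitively on the $\sigma$-stable members. The main obstacle — and where genuine work is required — is precisely this case analysis, most delicately in type $A_n$ (where $W(\Phi) = S_{n+1}$, the associate subsets of $\Delta$ correspond to unordered partitions of $n+1$, and one must check that the flip-stable ones in a fixed partition class are permuted by the centralizer $W(\Phi)^\sigma$) and in type $D_n$ (where the "outer" behavior of subsets involving the two tail nodes interacts subtly with the flip). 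In the remaining low-rank cases ($D_4$ triality, $E_6$) one can finish by direct inspection, possibly aided by the explicit list of Tits indices. Assembling these per-type verifications with the reduction steps completes the proof.
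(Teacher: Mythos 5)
Your overall architecture (reduce to standard Levi subgroups via Lemma \ref{lem:1}, reduce to absolutely simple groups, then a case-by-case check for quasi-split groups using $W(\mc G,\mc S)=W(\mc G,\mc T)^{\Gamma_K}$) matches the paper, and the final case analysis you sketch ($A_n$ and $D_n$ with the flip, $D_4$ with triality, $E_6$) is exactly the content of Lemma \ref{lem:4}. The genuine gap is your reduction to the quasi-split case. You argue that the question depends only on the index $(\Delta,\Delta_0,\mu_{\mc B})$ and is ``hence the same for $\mc G$ and for the quasi-split form with the same index''. But the quasi-split inner form does \emph{not} have the same index: quasi-splitness means precisely $\Delta_0=\emptyset$, and inner twisting changes $\Delta_0$, hence the relative root system $\Phi(\mc G,\mc S)$ and the relative Weyl group $W(\mc G,\mc S)$ (compare $GL_n$ with $GL_{n/d}(D)$ for a division algebra $D$: relative types $A_{n-1}$ versus $A_{n/d-1}$). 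So the combinatorial problem for $\mc G$ is genuinely different from the one for $\mc G^*$, and your reduction, taken literally, would force you to verify the statement for every Tits index, not just for the $\Delta_0=\emptyset$ cases you propose to treat.

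The paper closes exactly this gap with a geometric argument (Lemmas \ref{lem:6} and \ref{lem:7}, due to Waldspurger), and it is the part of the proof your proposal is missing. One normalizes the inner twist $\psi:\mc G\to\mc G^*$ so that the cocycle $u(\gamma)$ lies in $\mc L^*_{\Delta_0}(K_s)$; then a $K_s$-subgroup of $\mc G$ containing $\mc L_{\Delta_0}$ is defined over $K$ if and only if its image under $\psi$ is. Given $\mc L_I,\mc L_J$ geometrically conjugate in $\mc G$, one transports them to Levi $K$-subgroups of $\mc G^*$, conjugates there by an element of $\mc G^*(K)$ (the quasi-split case), pulls the resulting parabolic back to a parabolic $K$-subgroup $\mc P\supset\mc L_J$ of $\mc G$ which is $\mc G(K_s)$-conjugate, hence $\mc G(K)$-conjugate, to $\mc P_I$, and finishes using that all $K$-rational Levi factors of $\mc P_I$ are $\mc P_I(K)$-conjugate. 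You need an argument of this kind (or else a full index-by-index verification) to make your second reduction valid; the rest of your outline is sound, modulo actually carrying out the quasi-split case analysis.
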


The proof consists of several steps:
\begin{itemize}
\item Reduction from reductive to quasi-split $\mc G$.
\item Reduction from reductive (quasi-split) to absolutely simple (quasi-split) $\mc G$.
\item Proof for absolutely simple, quasi-split groups.
\end{itemize}
The first of these three steps is due to Jean-Loup Waldspurger.

Let $\mc G^*$ be a quasi-split $K$-group with an inner twist $\psi : \mc G \to \mc G^*$.
Thus $\psi$ is an isomorphism of $K_s$-groups and there exists a map
$u : \Gamma_K \to \mc G^* (K_s)$ such that
\begin{equation}\label{eq:1.7}
\psi \circ \gamma \circ \psi^{-1} = \mr{Ad}(u(\gamma)) \circ \gamma^*
\qquad \forall \gamma \in \Gamma_K .
\end{equation}
Here $\gamma^*$ denotes the $\Gamma_K$-action which defines the $K$-structure of $\mc G^*$.
We fix a Borel $K$-subgroup $\mc B^*$ of $\mc G^*$ and a maximal $K$-torus
$\mc T^* \subset \mc B^*$ which is maximally $K$-split. In other words, $(\mc B^*, \mc T^*)$
is a minimal parabolic pair of $\mc G^*$, defined over $K$. 
In $\mc G^*$ we also have the parabolic pair 
\[
(\mc P_{\Delta_0}^* , \mc L_{\Delta_0}^*) := (\psi (\mc P_{\Delta_0}), \psi (\mc L_{\Delta_0})) ,
\]
which is defined over $K_s$.
By the conjugacy of minimal parabolic pairs, there exists a $g_0 \in \mc G^* (K_s)$ such that 
\[
g_0 \psi (\mc P_{\Delta_0}) g_0^{-1} \supset \mc B^* \quad \text{ and } \quad
g_0 \psi (\mc L_{\Delta_0}) g_0^{-1} \supset \mc T^* .
\]
Replacing $\psi$ by Ad$(g_0) \circ \psi$, we may assume that 
$\mc P_{\Delta_0}^* \supset \mc B^*$ and $\mc L_{\Delta_0}^* \supset \mc T^*$.

\begin{lem}\label{lem:6} 
\enuma{
\item The parabolic pair $(\mc P_{\Delta_0}^*,\mc L_{\Delta_0}^*)$ is defined over $K$.
\item $u(\gamma) \in \mc L_{\Delta_0}^* (K_s)$ for all $\gamma \in \Gamma_K$.
\item Let $\mc H$ be a $K_s$-subgroup of $\mc G$ containing $\mc L_{\Delta_0}$.
Then $\mc H$ is defined over $K$ if and only if $\psi (\mc H)$ is defined over $K$.
}
\end{lem}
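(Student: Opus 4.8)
The plan is to prove the three assertions in order, each using the previous ones. Throughout I rewrite \eqref{eq:1.7} as $\gamma^* = \mr{Ad}(u(\gamma))^{-1}\circ\psi\circ\gamma\circ\psi^{-1}$ on $\mc G^*(K_s)$, and I use repeatedly that $\mc P_{\Delta_0},\mc L_{\Delta_0}$ (in $\mc G$) and $\mc B^*,\mc T^*$ (in $\mc G^*$) are defined over $K$, that $\psi$ is a $K_s$-isomorphism, and that a $\Gamma_K$-stable $K_s$-subgroup of a $K$-group is defined over $K$. For part (a), applying the rewritten relation to $\mc P_{\Delta_0}^*=\psi(\mc P_{\Delta_0})$ and using $\gamma(\mc P_{\Delta_0})=\mc P_{\Delta_0}$ gives $\gamma^*(\mc P_{\Delta_0}^*)=u(\gamma)^{-1}\,\mc P_{\Delta_0}^*\,u(\gamma)$ for all $\gamma\in\Gamma_K$. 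Since $\mc B^*\subseteq\mc P_{\Delta_0}^*$ and $\gamma^*(\mc B^*)=\mc B^*$, we get $u(\gamma)\,\mc B^*\,u(\gamma)^{-1}\subseteq\mc P_{\Delta_0}^*$, so $\mc P_{\Delta_0}^*$ contains the two Borel subgroups $\mc B^*$ and $u(\gamma)\mc B^*u(\gamma)^{-1}$, both defined over $K_s$. Conjugacy of Borel subgroups of the connected group $\mc P_{\Delta_0}^*$ over $K_s$, together with $N_{\mc G^*}(\mc B^*)=\mc B^*$, then forces $u(\gamma)\in\mc P_{\Delta_0}^*(K_s)$; as parabolic subgroups are self-normalizing, $\gamma^*(\mc P_{\Delta_0}^*)=\mc P_{\Delta_0}^*$, so $\mc P_{\Delta_0}^*$ is defined over $K$. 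For the Levi factor, $\mc L_{\Delta_0}^*$ is the unique Levi factor of $\mc P_{\Delta_0}^*$ containing $\mc T^*$, namely the subgroup generated by $\mc T^*$ and the root subgroups $\mc U_\alpha$ for which $\alpha$ and $-\alpha$ are both roots of $(\mc P_{\Delta_0}^*,\mc T^*)$; this description is Galois-equivariant in the pair $(\mc P_{\Delta_0}^*,\mc T^*)$, which is now defined over $K$, so $\mc L_{\Delta_0}^*$ is defined over $K$ as well.

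For part (b), the proof of (a) already gives $u(\gamma)\in\mc P_{\Delta_0}^*(K_s)$, so it suffices to show that $u(\gamma)$ normalizes $\mc L_{\Delta_0}^*$ and then invoke the identity $N_{\mc P}(\mc L)=\mc L$ for a Levi factor $\mc L$ of a parabolic subgroup $\mc P$ (a consequence of the simple transitivity of $\mc R_u(\mc P)$ on the set of Levi factors of $\mc P$), which yields $u(\gamma)\in\mc L_{\Delta_0}^*(K_s)$. For $l^*\in\mc L_{\Delta_0}^*(K_s)$, the rewritten \eqref{eq:1.7} gives $\mr{Ad}(u(\gamma))(\gamma^*(l^*))=\psi(\gamma(\psi^{-1}(l^*)))$, which lies in $\mc L_{\Delta_0}^*(K_s)$ because $\psi^{-1}(l^*)\in\mc L_{\Delta_0}(K_s)$ and $\mc L_{\Delta_0}$ is defined over $K$. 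Since $\mc L_{\Delta_0}^*$ is defined over $K$ by part (a), $\gamma^*$ permutes $\mc L_{\Delta_0}^*(K_s)$, hence $\mr{Ad}(u(\gamma))$ maps $\mc L_{\Delta_0}^*(K_s)$ into itself; as $\mc L_{\Delta_0}^*(K_s)$ is Zariski dense in $\mc L_{\Delta_0}^*$, this shows $u(\gamma)\in N_{\mc G^*}(\mc L_{\Delta_0}^*)(K_s)$, and we conclude.

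For part (c), from $\mc L_{\Delta_0}\subseteq\mc H$ we get $\mc L_{\Delta_0}^*\subseteq\psi(\mc H)$, so by part (b) $u(\gamma)\in\psi(\mc H)(K_s)$ and therefore $\mr{Ad}(u(\gamma))$ preserves $\psi(\mc H)$. Using \eqref{eq:1.7}, the equality $\gamma(\mc H)=\mc H$ is equivalent to $(\psi\circ\gamma\circ\psi^{-1})(\psi(\mc H))=\psi(\mc H)$, that is, to $(\mr{Ad}(u(\gamma))\circ\gamma^*)(\psi(\mc H))=\psi(\mc H)$, that is, to $\gamma^*(\psi(\mc H))=\psi(\mc H)$. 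Letting $\gamma$ range over $\Gamma_K$ and applying Galois descent on each side gives the claimed equivalence.

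The one genuinely geometric point I expect is the parabolic case of (a): the key observation is that the $\gamma^*$-translate of $\mc P_{\Delta_0}^*$ still contains the $K$-rational Borel subgroup $\mc B^*$, which pins the translate back to $\mc P_{\Delta_0}^*$. Everything else is bookkeeping with the inner-twisting relation together with standard structure theory (self-normalization of parabolic and Borel subgroups, $N_{\mc P}(\mc L)=\mc L$, uniqueness of the Levi factor through a fixed maximal torus, and Galois descent), the only mild subtlety being to make sure these facts are applied over $K_s$ rather than over $\overline K$.
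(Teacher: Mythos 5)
Your proof is correct and follows essentially the same route as the paper: exploit the inner-twist relation \eqref{eq:1.7} together with the standardness of $(\mc P_{\Delta_0}^*,\mc L_{\Delta_0}^*)$ to get $\Gamma_K$-stability and $u(\gamma)\in\mc L_{\Delta_0}^*(K_s)$, then transport $\Gamma_K$-stability through $\psi$ for part (c). The only (harmless) variation is in (a), where the paper simply invokes that two conjugate standard parabolic pairs are equal, while you take a slightly longer detour via conjugacy of Borel subgroups over $K_s$ and the uniqueness of the Levi factor containing $\mc T^*$.
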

\begin{proof}
(a) Recall that a $K_s$-subgroup of $\mc G$ is defined over $K$ if and only if it is 
$\Gamma_K$-stable. Applying that to $\mc P_{\Delta_0}$ and $\mc L_{\Delta_0}$, 
we see from \eqref{eq:1.7} that Ad$(u(\gamma)) \circ \gamma^*$ stabilizes 
$(\mc P_{\Delta_0}^*,\mc L_{\Delta_0}^*)$. In other words, Ad$(u(\gamma))$ sends 
$(\gamma^* \mc P_{\Delta_0}^*,\gamma^* \mc L_{\Delta_0}^*)$ to 
$(\mc P_{\Delta_0}^*,\mc L_{\Delta_0}^*)$. By the above setup both 
$(\mc P_{\Delta_0}^*,\mc L_{\Delta_0}^*)$ and $(\gamma^* \mc P_{\Delta_0}^*, 
\gamma^* \mc L_{\Delta_0}^*)$ are standard, that is, contain $(\mc B^*,\mc T^*)$. But two
conjugate standard parabolic pairs of $\mc G^*$ are equal, so $\gamma^*$ stabilizes
$(\mc P_{\Delta_0}^*,\mc L_{\Delta_0}^*)$. Hence this parabolic pair is defined over $K$.\\
(b) Now also Ad$(u(\gamma))$ stabilizes $(\mc P_{\Delta_0}^*,\mc L_{\Delta_0}^*)$. As 
every parabolic subgroup is its own normalizer:
\[
u(\gamma) \in N_{\mc G^* (K_s)}(\mc P_{\Delta_0}^*,\mc L_{\Delta_0}^*) = 
N_{\mc P_{\Delta_0}^* (K_s)}(\mc L_{\Delta_0}^*) = \mc L_{\Delta_0}^* (K_s) . 
\]
(c) By part Ad$(u(\gamma))$ stabilizes $\psi (\mc H)$, for any $\gamma \in \Gamma_K$. 
From \eqref{eq:1.7} we see now that $\gamma$ stabilizes $\mc H$ if and only
if it stabilizes $\psi (\mc H)$.
\end{proof}

We thank Jean-Loup Waldspurger for showing us the proof of the next result.

\begin{lem}\label{lem:7}
Suppose that Theorem \ref{thm:2} holds for all quasi-split $K$-groups. Then
it holds for all reductive $K$-groups $\mc G$. 
\end{lem}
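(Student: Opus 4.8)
The plan is to transfer the statement of Theorem \ref{thm:2} for $\mc G$ to the quasi-split group $\mc G^*$ through the inner twist $\psi$, using the preparatory Lemma \ref{lem:6} as the dictionary between $K$-subgroups of $\mc G$ containing $\mc L_{\Delta_0}$ and $K$-subgroups of $\mc G^*$ containing $\mc L^*_{\Delta_0}$. First I would reduce to the case of \emph{standard} Levi $K$-subgroups: by Lemma \ref{lem:1}.a each of $\mc L, \mc L'$ is $\mc G(K)$-conjugate to some $\mc L_I, \mc L_J$, and $\mc G(K)$-conjugacy (resp.\ $\mc G(\overline K)$-conjugacy) is unchanged under replacing a Levi subgroup by a $\mc G(K)$-conjugate; so it suffices to prove that $\mc L_I$ and $\mc L_J$ are $\mc G(K)$-conjugate if and only if they are $\mc G(\overline K)$-conjugate, where $I, J$ are $\mu_{\mc B}(\Gamma_K)$-stable subsets of $\Delta$ containing $\Delta_0$.

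Next, since $\mc L_I \supset \mc L_{\Delta_0}$, Lemma \ref{lem:6}.c tells us $\psi(\mc L_I)$ is a $K$-subgroup of $\mc G^*$, and likewise for $\psi(\mc L_J)$. I claim $\psi(\mc L_I)$ is a Levi $K$-subgroup of $\mc G^*$: over $K_s$ it is a Levi factor of $\psi(\mc P_I)$, and by Lemma \ref{lem:6}.a,c (applied to $\mc P_I \supset \mc L_{\Delta_0} \supset \mc P_{\Delta_0} \supset \mc L_{\Delta_0}$, so really to $\mc P_I$) the parabolic $\psi(\mc P_I)$ is defined over $K$ too, and $\psi(\mc L_I)$ is its Levi factor defined over $K$. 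The key point is now the conjugacy comparison. If $g \in \mc G(\overline K)$ conjugates $\mc L_I$ to $\mc L_J$, then $\psi(g) \in \mc G^*(\overline K)$ conjugates $\psi(\mc L_I)$ to $\psi(\mc L_J)$ (here $\psi$ is defined over $K_s$, hence over $\overline K$), so the two Levi $K$-subgroups $\psi(\mc L_I), \psi(\mc L_J)$ of the quasi-split group $\mc G^*$ are $\mc G^*(\overline K)$-conjugate. By hypothesis they are then $\mc G^*(K)$-conjugate: there is $h \in \mc G^*(K)$ with $h \psi(\mc L_I) h^{-1} = \psi(\mc L_J)$.

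The main obstacle — and the place where Lemma \ref{lem:6}.b earns its keep — is pulling this $h$ back to an element of $\mc G(K)$. The element $\psi^{-1}(h) \in \mc G(K_s)$ conjugates $\mc L_I$ to $\mc L_J$, but a priori it need not be $\Gamma_K$-fixed, because $\psi$ only intertwines the actions up to the cocycle $u(\gamma)$. Applying $\gamma$ to $\psi^{-1}(h) \mc L_I \psi^{-1}(h)^{-1} = \mc L_J$ and using \eqref{eq:1.7} together with $u(\gamma) \in \mc L^*_{\Delta_0}(K_s) \subset \psi(\mc L_I)(K_s) \cap \psi(\mc L_J)(K_s)$ from Lemma \ref{lem:6}.b, one checks that $\mr{Ad}(u(\gamma)^{-1} \gamma^*(h) u(\gamma))$ and $\mr{Ad}(h)$ have the same effect of conjugating $\psi(\mc L_I)$ onto $\psi(\mc L_J)$; hence $\psi^{-1}(h)^{-1} \gamma(\psi^{-1}(h))$ normalizes $\mc L_I$ for every $\gamma$. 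So $\psi^{-1}(h)$ determines a well-defined element of $N_{\mc G}(\mc L_I) / \mc L_I$, and the obstruction to correcting it to a $\Gamma_K$-fixed conjugator lives in $H^1(\Gamma_K, \mc L_I)$ or, after using that $\mc L_I$ is connected and contains a maximal torus, can be killed by adjusting within $\mc L_J(K_s)$. Concretely, I expect the cleanest route is: the coset $\psi^{-1}(h)\mc L_I$ lies in $(\mc G / \mc L_I)(K_s)$, is $\Gamma_K$-stable by the computation above, hence lies in $(\mc G/\mc L_I)(K)$, and since $\mc G(K) \to (\mc G/\mc L_I)(K)$ is surjective when $H^1(K, \mc L_I)$ vanishes on the relevant class — which holds here because the fibre is a torsor under a connected group and one can use the normalizer of a maximal $K$-split torus as in the proof of Lemma \ref{lem:1}.b — we obtain $g \in \mc G(K)$ with $g \mc L_I g^{-1} = \mc L_J$. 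The converse direction ($\mc G(K)$-conjugate $\Rightarrow$ $\mc G(\overline K)$-conjugate) is trivial, completing the reduction.
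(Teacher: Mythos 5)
Your setup — reduction to standard Levi subgroups, transfer to $\mc G^*$ via $\psi$ and Lemma \ref{lem:6}.c, and invoking the hypothesis to produce $h \in \mc G^*(K)$ conjugating $\psi(\mc L_I)$ to $\psi(\mc L_J)$ — matches the paper exactly. The gap is in the final descent step. You correctly observe that $h' := \psi^{-1}(h)$ satisfies $h'\mc L_I h'^{-1} = \mc L_J$ and that $h'^{-1}\gamma(h')$ normalizes $\mc L_I$ for every $\gamma$, so the coset of $h'$ gives a $K$-point of $\mc G / N_{\mc G}(\mc L_I)$. But to lift that $K$-point to an element of $\mc G(K)$ you must trivialize a class in $H^1(K, N_{\mc G}(\mc L_I))$, and your justification — ``the fibre is a torsor under a connected group and one can use the normalizer of a maximal $K$-split torus as in the proof of Lemma \ref{lem:1}.b'' — is not an argument. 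The normalizer $N_{\mc G}(\mc L_I)$ need not be connected, and even $H^1(K,\mc L_I)$ is nonzero for a general field $K$ (already for tori over $\Q$). This obstruction is precisely the difficulty the whole theorem is about: if geometric conjugators of Levi subgroups could always be corrected to rational ones by such a cohomological argument, the case-by-case analysis of Lemma \ref{lem:4} would be unnecessary.

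The paper sidesteps the cohomology entirely by descending a \emph{parabolic subgroup} instead of the conjugating element. Set $\mc P^* = h\, \psi(\mc P_I)\, h^{-1}$; this is a parabolic $K$-subgroup of $\mc G^*$ containing $\psi(\mc L_J) \supset \mc L^*_{\Delta_0}$, so by Lemma \ref{lem:6}.c the group $\mc P := \psi^{-1}(\mc P^*)$ is a parabolic $K$-subgroup of $\mc G$ containing $\mc L_J$, and it is $\mc G(K_s)$-conjugate to $\mc P_I$. Two key facts about parabolics, for which the relevant descent \emph{is} known, then finish the proof: geometrically conjugate parabolic $K$-subgroups are $\mc G(K)$-conjugate (their conjugacy classes are classified by subsets of $\Delta_K$), giving $g \in \mc G(K)$ with $g\mc P g^{-1} = \mc P_I$; and any two Levi factors of $\mc P_I$ defined over $K$ — here $g\mc L_J g^{-1}$ and $\mc L_I$ — are conjugate under $\mc P_I(K)$ by \cite[Proposition 16.1.1]{Spr}. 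You should replace your cohomological lifting step with this detour through the parabolic $\mc P$, or else supply an actual proof that the obstruction class you encounter vanishes (which I do not believe can be done in this generality).
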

\begin{proof}
By Lemma \ref{lem:1}.a it suffices to consider two standard Levi $K$-subgroups $\mc L_I,
\mc L_J$ of $\mc G$. We assume that they are $\mc G (\overline K)$-conjugate. By Lemma
\ref{lem:1}.b this depends only the Weyl group of $(\mc G,\mc T)$, so we can pick
$w \in N_{\mc G (K_s)}(\mc T)$ with $w \mc L_I w^{-1} = \mc L_J$. We denote the images
of these objects (and of $\mc P_I,\mc P_J$) under $\psi$ by a *, e.g. $\mc L^*_I =
\psi (\mc L_I)$. Then $w^* \mc L_I^* {w^*}^{-1} = \mc L_J^*$ and by Lemma \ref{lem:6}.c 
the parabolic pairs $(\mc P_I^*,\mc L_I^*)$ and $(\mc P_J^*,\mc L_J^*)$ are defined over $K$.

Using the hypothesis of the lemma for $\mc G^*$, we pick a $h^* \in \mc G^* (K)$ with 
$h^* \mc L_I^* {h^*}^{-1} = \mc L_J^*$. Write $\mc P^* = h^* \mc P_I^* {h^*}^{-1}$,
$h = \psi^{-1}(h^*)$ and $\mc P := \psi^{-1}(\mc P^*)$. 
Here $\mc P^*$ is defined over $K$ because $\mc P_I^*$ and $h^*$ are. Furthermore 
\[
\mc P^* \supset \mc L_J^* \supset \mc L_{\Delta_0}^* \quad \text{and} \quad
\mc P \supset \mc L_J \supset \mc L_{\Delta_0} ,
\]
so by Lemma \ref{lem:6}.c $\mc P$ is defined over $K$.

Thus the parabolic $K$-subgroups $\mc P_I$ and $\mc P$ of $\mc G$ are conjugate by
$h \in \mc G (K_s)$. Hence they are also $\mc G (K)$-conjugate, say
$g \mc P g^{-1} = \mc P_I$ with $g \in \mc G (K)$. Now $g \mc L_J g^{-1}$ is a Levi
factor of $\mc P_I$ defined over $K$. By \cite[Proposition 16.1.1]{Spr} $g \mc L_J g^{-1}$
is $\mc P_I (K)$-conjugate to $\mc L_I$, so $\mc L_I$ and $\mc L_J$ are $\mc G(K)$-conjugate. 
\end{proof}

\begin{lem}\label{lem:3}
Suppose that Theorem \ref{thm:2} holds for all absolutely simple $K$-groups. Then
it holds for all reductive $K$-groups $\mc G$.

Similarly, if Theorem \ref{thm:2} holds for all absolutely simple, quasi-split $K$-groups,
then it holds for all quasi-split reductive $K$-groups $\mc G$.
\end{lem}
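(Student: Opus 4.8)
The plan is to reduce the general reductive (resp.\ quasi-split reductive) case to the absolutely simple (resp.\ absolutely simple quasi-split) case by decomposing $\mc G$ into its simple factors up to isogeny, and then checking that rational/geometric conjugacy of Levi subgroups is compatible with that decomposition. First I would pass from $\mc G$ to a convenient cover or quotient: conjugacy of Levi subgroups depends only on the root datum with its $\Gamma_K$-action (more precisely, by Lemma~\ref{lem:1} on $\Phi(\mc G,\mc T)$, the Weyl group $W(\mc G,\mc T)$, and the $\mu_{\mc B}$-action, together with $\Delta_0$), so I may replace $\mc G$ by its derived group, or by the simply connected cover of the derived group, without changing the problem; one must check that a standard Levi subgroup of $\mc G$ corresponds to a standard Levi subgroup of $\mc G_{\der}^{\Sc}$ and that $\mc G(K)$-conjugacy is detected on the level of $N_{\mc G}(\mc T)/\mc T$ acting on subsets of $\Delta$, which is already built into Lemma~\ref{lem:1}.

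Next, for $\mc G$ semisimple simply connected, $\mc G = \prod_i \mc G_i$ where each $\mc G_i = \mathrm{Res}_{K_i/K}(\mc H_i)$ for a finite separable extension $K_i/K$ and an absolutely simple simply connected $K_i$-group $\mc H_i$ (this is the standard structure theorem, e.g.\ \cite[\S 6.21.(ii)]{Bor} or via the $\Gamma_K$-orbits on the connected components of the Dynkin diagram). The Dynkin diagram $\Delta$ of $\mc G$ decomposes as a disjoint union of connected components permuted by $\mu_{\mc B}(\Gamma_K)$, with orbits of components indexed by the $i$; a $\mu_{\mc B}(\Gamma_K)$-stable subset $I \subset \Delta$ is then the same as a tuple $(I_i)_i$, with $I_i$ a $\mu_{\mc B}(\Gamma_{K_i})$-stable subset of the diagram of $\mc H_i$ (using Shapiro-type compatibility between the $\Gamma_K$-action on $\mathrm{Res}_{K_i/K}\mc H_i$ and the $\Gamma_{K_i}$-action on $\mc H_i$). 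Likewise $W(\mc G,\mc S) \cong \prod_i W(\mc H_i, \mc S_i)$, and $\mc G(K) = \prod_i \mc H_i(K_i)$. Hence two standard Levi $K$-subgroups $\mc L_I$ and $\mc L_J$ of $\mc G$ are $\mc G(K)$-conjugate iff $\mc L_{I_i}$ and $\mc L_{J_i}$ are $\mc H_i(K_i)$-conjugate for every $i$, and the analogous statement holds over $\overline K$; so the equivalence for each factor $\mc H_i$ (which is absolutely simple) yields the equivalence for $\mc G$. The quasi-split variant is identical once one notes that $\mc G$ quasi-split forces each $\mc H_i$ quasi-split.

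The main obstacle I expect is not conceptual but bookkeeping: making precise the identification of the $\mu_{\mc B}$-action and the Weyl-group action under the Weil restriction, so that "$\mu_{\mc B}(\Gamma_K)$-stable subset of $\Delta(\mc G)$" genuinely corresponds to a tuple of "$\mu_{\mc B}(\Gamma_{K_i})$-stable subsets of $\Delta(\mc H_i)$", and that conjugacy really does decompose as a product. One delicate point is whether a subset $I$ that is $\mu_{\mc B}(\Gamma_K)$-stable but lies in a single $\Gamma_K$-orbit of components (rather than being a union over all components) is correctly handled --- but since $I$ must contain $\Delta_0$ and $\Delta_0$ meets every component in the reductive setting only after possibly adjoining central tori, one has to track the minimal Levi $Z_{\mc G}(\mc S)$ carefully; this is where I would lean on \eqref{eq:1.1} and on the fact that $\Delta_0$ itself decomposes compatibly with the product. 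Once that is set up, everything reduces to Lemma~\ref{lem:1} applied factor by factor.
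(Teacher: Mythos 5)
Your proposal is correct and takes essentially the same route as the paper: reduce via Lemma~\ref{lem:1}.b to a convenient group in the central isogeny class (the paper passes to the adjoint quotient rather than the simply connected cover, but both are justified by the isogeny-invariance of the relative Weyl group and of the index), split off the $K$-simple factors, and identify each $K$-simple factor over $K_s$ with a product of absolutely simple groups permuted transitively by $\Gamma_K$. Your Weil-restriction/Shapiro bookkeeping is exactly what the paper carries out explicitly, using $W(\mc G,\mc S) = \bigl( \prod_i W(\mc G_i,\mc S_i) \bigr)^{\Gamma_K}$ together with the triviality of the $\Gamma_i$-action on $W(\mc G_i,\mc S_i)$ to transport a Weyl element from one absolutely simple factor to all of them.
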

\begin{proof}
The set of standard Levi $K$-subgroups of $\mc G$ does not change when we divide out any 
central $K$-subgroup $\mc Z$ of $\mc G$. In Lemma \ref{lem:1} the criterion (ii) also does not 
change if we divide out $\mc Z$, because $W(\mc G / \mc Z, \mc S / \mc Z) \cong W(\mc G,\mc S)$.
Therefore we may assume that $\mc G$ is of adjoint type. 

Now $\mc G$ is a direct product of $K$-simple groups of adjoint type. If Theorem \ref{thm:2} 
holds for $\mc G'$ and $\mc G''$, then it clearly holds for $\mc G' \times \mc G''$. Thus we
may further assume that $\mc G$ is $K$-simple and of adjoint type.

Then there are simple adjoint $K_s$-groups $\mc G_i$ such that
\begin{equation}\label{eq:1.4}
\mc G \cong \mc G_1 \times \cdots \times \mc G_d \qquad \text{as } K_s \text{-groups.}  
\end{equation}
Since $\mc G$ is $K$-simple, the action of $\Gamma_K$ (which defines the $K$-structure)
permutes the $\mc G_i$ transitively. Write $\mc T_i = \mc T \cap \mc G_i$, so that
$\mc T = \mc T_1 \times \cdots \times \mc T_d$ and
\begin{align}\label{eq:1.3} 
W(\mc G, \mc T) = W(\mc G_1,\mc T_1) \times \cdots \times W(\mc G_d, \mc T_d) , \\
\Phi (\mc G, \mc T) = \Phi (\mc G_1,\mc T_1) \sqcup \cdots \sqcup \Phi (\mc G_d, \mc T_d) .
\end{align}
Put $\Delta^i = \Delta \cap \Phi (\mc G_i,\mc T_i)$ and $\Delta_0^i = 
\Delta_0 \cap \Phi (\mc G_i,\mc T_i)$. Let $\Gamma_i$ be the 
$\Gamma_K$-stabilizer of $\mc G_i$. By \cite[Proposition 15.5.3]{Spr} $\mu_{\mc B}(\Gamma_i)$ 
stabilizes $\Delta_0^i$ and $\mu_{\mc B}(\Gamma) \Delta_0^i = \Delta_0$.

Select $\gamma_i \in \Gamma_K$ with $\gamma_i (\mc G_1) = \mc G_i$ and $\gamma_1 = 1$. Note that
$\mc B_i := \gamma_i (\mc B \cap \mc G_1)$ is a Borel subgroup of $\mc G_i$. To simplify things 
a little bit, we replace $\mc B$ by $\mc B_1 \times \cdots \times \mc B_d$. With this new $\mc B$:
\begin{equation}\label{eq:1.6}
\mu_{\mc B}(\gamma_i) \Delta^1 = \gamma_i (\Delta^1) = \Delta^i \quad \text{and} \quad
\mu_{\mc B}(\gamma_i) \Delta_0^1 = \gamma_i (\Delta_0^1) = \Delta_0^i .
\end{equation}
By Lemma \ref{lem:1}.a it suffices to prove Theorem \ref{thm:2} for standard Levi 
$K$-subgroups $\mc L_I ,\mc L_J$ of $\mc G$, where $\Delta_0 \subset I,J \subset \Delta$ and $I,J$ 
are $\mu_{\mc B}(\Gamma)$-stable. We suppose that $\mc L_I$ and $\mc L_J$ are 
$\mc G (\overline K)$-conjugate, and we have to show that they are also $\mc G (K)$-conjugate.

By \eqref{eq:1.4} the groups $\mc L_I \cap \mc G_i$ and $\mc L_J \cap \mc G_i$ are 
$\mc G_i (K_s)$-conjugate, for $i = 1,\ldots,d$. The absolutely simple group $\mc G_i$ 
is defined over the field $K_i := K_s^{\Gamma_i}$. By the assumption of the current lemma,
$\mc L_I \cap \mc G_i$ and $\mc L_J \cap \mc G_i$ are $\mc G_i (K_i)$-conjugate.

Let $\mc S_i$ be the maximal $K_i$-split torus of $\mc G_i$ such that 
\[
\mc S = \mc S_1 \times \cdots \times \mc S_d \qquad \text{as } K_s \text{-groups.} 
\]
Then $\Gamma_i$ acts trivially on $W(\mc G_i,\mc S_i)$, because the latter is generated 
by $\Gamma_i$-invariant reflections \cite[Lemma 15.3.7.ii]{Spr}. Consider the 
$\mu_{\mc B}(\Gamma_i)$-stable sets $I^i = I \cap \Phi (\mc G_i,\mc T_i)$ and 
$J^i = J \cap \Phi (\mc G_i,\mc T_i)$. By Lemma \ref{lem:1}.b the sets 
$I^i \setminus \Delta_0^i$ and $J^i \setminus \Delta_0^i$ are $W(\mc G_i,\mc S_i)$-associate. 
Pick $w_1 \in W(\mc G_1,\mc S_1)$ with
\[
w_1 ( J^1 \setminus \Delta_0^1 ) = I^1 \setminus \Delta_0^1 .
\]
The analogue of \eqref{eq:1.3} for $\mc S$ reads
\begin{equation}\label{eq:1.5}
W(\mc G,\mc S) = \big( W(\mc G_1,\mc S_1) \times \cdots \times W(\mc G_d, \mc S_d) \big)^{\Gamma_K} .
\end{equation}
Put $w_i = \gamma_i (w_1) \in W(\mc G_i,\mc S_i)$. From \eqref{eq:1.5} we see that 
$w := w_1 \times \cdots \times w_d$ lies in $W(\mc G,\mc S)$. 
By \eqref{eq:1.6} and by the $\mu_{\mc B}(\Gamma_K)$-stability of $I$ and $J$:
\[
w_i ( J^i \setminus \Delta_0^i ) = I^i \setminus \Delta_0^i \qquad \text{for } i = 1,\ldots,d .  
\]
Hence $w (J \setminus \Delta_0 ) = I \setminus \Delta_0$. Now Lemma \ref{lem:1}.b 
says that $\mc L_I$ and $\mc L_J$ are $\mc G (K)$-conjugate.

Finally, we take a closer at the special case where the initial group $\mc G$ was quasi-split
over $K$. Then the group $\mc G_i$ from \eqref{eq:1.4} is quasi-split over $K_i$, for instance
because it admits the $\Gamma_i$-stable Borel subgroup $\mc B_i$. So in the above proof of
Theorem \ref{thm:2} for a quasi-split group $\mc G$, we only need to assume it for the 
quasi-split absolutely simple groups $\mc G_i$.
\end{proof}

When $\mc G$ is quasi-split over $K$, $\Delta_0$ is empty and we can choose $\mc B$ and $\mc T$
defined over $K$, that is, $\Gamma_K$-stable. Then the $\mu$-action of $\Gamma_K$ agrees with 
the action defining the $K$-structure, and it is known from \cite[Proposition 2.4.2]{SiZi} that
\begin{equation}\label{eq:7}
W(\mc G,\mc S) = W(\mc G,\mc T)^{\Gamma_K} . 
\end{equation}
In this case every $\Gamma_K$-stable subset $I$ of $\Delta$ gives rise to standard Levi 
$K$-subgroup $\mc L_I$ of $\mc G$. Lemma \ref{lem:1}.b says that $\mc L_I$ and $\mc L_J$ are
\begin{itemize}
\item $\mc G(\overline K)$-conjugate if and only if $I$ and $J$ are $W(\mc G,\mc T)$-associate;
\item $\mc G(K)$-conjugate if and only if $I$ and $J$ are $W(\mc G,\mc T)^{\Gamma_K}$-associate.
\end{itemize}

\begin{lem}\label{lem:4}
Theorem \ref{thm:2} holds when $\mc G$ is absolutely simple and quasi-split (over $K$).
\end{lem}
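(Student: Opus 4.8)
The plan is to reduce the lemma to a purely combinatorial statement about root systems and then to verify that statement by running through the possible types. Since $\mc G$ is quasi-split we are in the situation recorded just before the lemma: $\Delta_0=\emptyset$, the groups $\mc B$ and $\mc T$ may be taken over $K$, the $\mu$-action of $\Gamma_K$ is the one defining the $K$-structure and factors through a subgroup $\theta\subseteq\mr{Aut}(\Delta)$, and by \eqref{eq:7} one has $W(\mc G,\mc S)=W(\mc G,\mc T)^{\Gamma_K}=W^\theta$ with $W:=W(\mc G,\mc T)$. By Lemma \ref{lem:1}.a it suffices to treat standard Levi subgroups $\mc L_I,\mc L_J$ with $I,J\subseteq\Delta$ $\theta$-stable. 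Over $\overline K$ the group $\mc G$ is split, so $\mc L_I$ and $\mc L_J$ are $\mc G(\overline K)$-conjugate precisely when $w(I)=J$ for some $w\in W$. Identifying $\theta$-stable subsets of $\Delta$ with subsets of $\Delta_K=\Delta/\theta$, and using that the restriction map $\Phi(\mc G,\mc T)\to\Phi(\mc G,\mc S)$ is $W^\theta$-equivariant with $\theta$-stable fibres, Lemma \ref{lem:1}.b translates $\mc G(K)$-conjugacy of $\mc L_I,\mc L_J$ into: $w'(I)=J$ for some $w'\in W^\theta$. Thus Theorem \ref{thm:2} for such $\mc G$ is equivalent to the assertion $(\star)$: \emph{if $I,J\subseteq\Delta$ are $\theta$-stable and $W$-conjugate, then they are $W^\theta$-conjugate.}

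If $\theta$ is trivial then $\mc G$ is split and $(\star)$ is exactly the remark following Lemma \ref{lem:1}. So I may assume $\theta\neq 1$; since $\Delta$ is connected this forces type $A_n$ $(n\ge 2)$, $D_n$ $(n\ge 4)$ or $E_6$ with $|\theta|=2$, except that for $D_4$ one may also have $|\theta|\in\{3,6\}$. In the latter case the only $\theta$-stable subsets of $\Delta(D_4)$ are $\emptyset$, the branch node, the three end nodes, and $\Delta$; these have pairwise distinct types ($\emptyset$, $A_1$, $3A_1$, $D_4$), so no two are $W$-conjugate and $(\star)$ holds trivially.

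This leaves ${}^2\!A_n$, ${}^2\!D_n$ and ${}^2\!E_6$, which I would handle through the relative root system $\Phi(\mc G,\mc S)$ — of type $C_m$ for ${}^2\!A_{2m-1}$, $BC_m$ for ${}^2\!A_{2m}$, $B_{n-1}$ for ${}^2\!D_n$, and $F_4$ for ${}^2\!E_6$ — with simple roots $\Delta_K$. For each, I would enumerate the subsets $\bar I\subseteq\Delta_K$, hence the $\theta$-stable $I\subseteq\Delta$, and compare two invariants: the $W$-conjugacy class of the parabolic subsystem $\langle I\rangle\subseteq\Phi(\mc G,\mc T)$ (which controls $\mc G(\overline K)$-conjugacy) and the $W^\theta$-conjugacy class of the parabolic subsystem $\langle\bar I\rangle\subseteq\Phi(\mc G,\mc S)$ (which controls $\mc G(K)$-conjugacy); $(\star)$ says precisely that the first determines the second, i.e.\ the natural map from $W^\theta$-classes of subsets of $\Delta_K$ to $W$-classes of $\theta$-stable subsets of $\Delta$ is injective. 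The verification uses the classical classification of parabolic subsystems up to conjugacy in types $A,B,C,BC,D,E_6,F_4$. A subtlety is that folding changes isomorphism types (a $\theta$-stable $A_k\times A_k$ upstairs folds to a single $A_k$; $\{\alpha_m,\alpha_{m+1}\}$ in ${}^2\!A_{2m}$ folds to the rank-one non-reduced system $BC_1$; a $\theta$-stable $D_k$ folds to $B_{k-1}$), so one must track the long/short data of the images, not merely their types. With that care, in ${}^2\!A_n$ and ${}^2\!D_n$ all $A$-type parabolic subsystems of the relative system lie inside its ``linear'' $A$-part, so same-type ones are automatically conjugate, and this absorbs every coincidence.

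The crux — and the step I expect to be the real work — is showing that no two distinct $\theta$-stable subsets of $\Delta$ are $W$-conjugate without being $W^\theta$-conjugate after folding. The ${}^2\!D_n$ family needs attention because $W(D_n)$ already has a nontrivial fusion pattern among parabolic subgroups (most famously the two $A_{n-1}$-parabolics are not $W(D_n)$-conjugate; conveniently they are interchanged by $\theta$ and so are not $\theta$-stable, hence do not occur as standard Levi $K$-subgroups). The ${}^2\!E_6$ case is the most computational: one must match the $W(E_6)$-conjugacy classes of the small parabolic subsystems ($3A_1$, $A_2{+}2A_1$, $\widetilde{A}_2{+}A_1$, $D_4$, $\ldots$) with the $W(F_4)$-classes of their images — e.g.\ verifying that the three standard parabolic subgroups of $F_4$ of type $A_1{+}\widetilde{A}_1$ form a single $W(F_4)$-class, in parallel with $E_6$ having a single class of $3A_1$. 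If $(\star)$ were to fail anywhere it would be through a long-versus-short clash in one of the non-simply-laced relative root systems; the content of the lemma is that this never occurs.
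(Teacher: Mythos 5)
Your reduction to the combinatorial statement $(\star)$ --- that $W$-associate $\theta$-stable subsets of $\Delta$ are already $W^\theta$-associate --- is exactly the reduction the paper makes, and your treatment of the split case and of triality $D_4$ (where the four stable subsets have pairwise distinct ranks) is complete and agrees with the paper. The gap is that for the three remaining families ${}^2\!A_n$, ${}^2\!D_n$, ${}^2\!E_6$ you describe a verification strategy but do not carry it out: phrases like ``I would enumerate'', ``the verification uses the classical classification'', and ``the content of the lemma is that this never occurs'' stand in for the case analysis that is the entire substance of the lemma. The paper does this work explicitly: for ${}^2\!A_n$ a multiset argument on the component sizes $(n_1,n_1,\ldots,n_k,n_k,n_0)$, the middle component being pinned down by its odd multiplicity; for ${}^2\!D_n$ a count of how many coordinates of $\Z^n$ a $D_{n_0}$-component versus an $A_{n_0}$-component occupies, which is a $W(D_n)$-invariant forcing $w$ to match the $D$-parts; for ${}^2\!E_6$ an explicit rank-by-rank enumeration of the association classes.

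Moreover, one of the shortcuts you invoke to dispose of types $A$ and $D$ is false as stated: in the relative system $B_{n-1}$ of ${}^2\!D_n$ the short simple root by itself generates an $A_1$-parabolic subsystem that does \emph{not} lie in the ``linear'' $A_{n-2}$-part and is not $W(B_{n-1})$-conjugate to a long $A_1$, so it is not true that all $A$-type parabolic subsystems of the relative system lie inside its linear part. (No counterexample to $(\star)$ arises from this, because the short $A_1$ unfolds to $A_1\times A_1$ upstairs --- but that is precisely the long/short bookkeeping your sketch defers rather than performs.) Until the enumeration is actually done for each type --- in particular the ${}^2\!E_6$/$F_4$ comparison you yourself flag as the most computational --- what you have is a correct plan, not a proof.
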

\begin{proof}
By Lemma \ref{lem:1} and the remarks after its proof, Theorem \ref{thm:2} holds for 
$K$-split reductive groups. Thus it suffices to consider quasi-split, non-split, absolutely 
simple $K$-groups. In view of Lemma \ref{lem:1}.a, we may assume that $\mc L = \mc L_I$
and $\mc L' = \mc L_J$ are standard Levi $K$-subgroups of $\mc G$. By the above criteria for 
conjugacy, the only things that matter are the root system $\Phi (\mc G,\mc T)$, its Weyl group 
and the Galois action on those. These reductions make a case-by-case consideration feasible. 
In each case, we suppose that $\mc L_I$ and $\mc L_J$ are $\mc G (\overline K)$-conjugate and we 
have to show that $w I = J$ for some $w \in W(\mc G,\mc S) = W(\mc G,\mc T)^{\Gamma_K}$.\\

\textbf{Type $A_n^{(2)}$.} 
The $\Gamma_K$-stable subset $I \subset A_n^{(2)}$ has the form
\[
{A_{n_1}}^2 \times \cdots \times {A_{n_k}}^2 \times A_{n_0}^{(2)} ,
\]
where $n_0$ has the same parity as $n$ and
\[
n_1 + \cdots + n_k + k \leq (n - n_0) / 2 .
\]
Here the connected component $A_{n_0}^{(2)}$ lies in 
the middle of the Dynkin diagram, and all the connected components $A_{n_i}$ occur two times, 
symmetrically around the middle. Similarly $J$ looks like
\[
{A_{m_1}}^2 \times \cdots \times {A_{m_l}}^2 \times A_{m_0}^{(2)} .
\]
Lemma \ref{lem:1}.b tells us that $I$ and $J$ are associate by an element $w$ of $W(\mc G,\mc T) 
\cong S_{n+1}$. Hence the multisets $(n_1,n_1,\ldots,n_k,n_k,n_0)$ and $(m_1,m_1,\ldots,m_l,m_l,m_0)$ 
are equal. Only the element $n_0$ (resp. $m_0$) occurs with odd multiplicity, so $n_0 = m_0$. 
Composing $w$ inside $S_{n+1}$ with a suitable permutation on the components $A_{n_0}$ of
$I$, we may assume that $w$ fixes the subset $A_{m_0}^{(2)} = A_{n_0}^{(2)}$ of $A_n^{(2)}$. 
In ${A_{(n - n_0 - 2)/2}}^2$, the complement of $A_{n_0}^{(2)}$ and the two adjacent simple roots, 
the sets
\[
I' := ( A_{n_1} \times \cdots \times A_{n_k} )^2 \quad \text{and} \quad
J' := ( A_{m_1} \times \cdots \times A_{m_l} )^2 
\]
are associated by $w$. In particular $k = l$. With the group 
$(S_{(n - n_0) / 2}^2)^{\Gamma_K} \cong S_{(n - n_0) / 2}$ 
we can sort $I'$ and $J'$, so that $n_1 \geq \cdots \geq n_k$ and $m_1 \geq \cdots \geq m_k$.
As $I'$ and $J'$ came from the same multiset, they become equal after sorting. This shows that
$w' I' = J'$ for some $w' \in (S_{(n - n_0) / 2}^2)^{\Gamma_K} \subset W(\mc G,\mc T)^{\Gamma_K}$.
In view of \eqref{eq:7}, this says $w' I = J$ with $w' \in W(\mc G,\mc S)$.\\

\textbf{Type $D_n^{(2)}$.}
The $\Gamma_K$-stable subset $I \subset D_n^{(2)}$ has the type 
\[
A_{n_1} \times \cdots \times A_{n_k} \times D_{n_0}^{(2)}
\quad \text{with } n_0 \geq 2 \text{ and }n_1 + \cdots + n_k + k + n_0 \leq n ,
\]
or (when $n_0 = 0)$
\[
A_{n_1} \times \cdots \times A_{n_k} 
\quad \text{with } n_1 + \cdots + n_k + k + 1 \leq n .
\]
Similarly we write 
\[
J = A_{m_1} \times \cdots \times A_{m_l} \times D_{m_0}^{(2)} \quad \text{with } m_0 \neq 1 .
\]
By assumption there exists a $w \in W(D_n)$ such that $w (I) = J$. Suppose that $n_0 \geq 2$
and $w D_{n_0}^{(2)}$ is a component $A_{n_0}$ of $J$. In the standard construction of
the root system $D_n$ in $\Z^n$, the subset $D_{n_0}^{(2)}$ involves precisely $n_0$
coordinates, whereas $A_{n_0}$ involves $n_0 + 1$ coordinates (irrespective of where it is
located in the Dynkin diagram). As $W(D_n) \subset S_n \ltimes \{ \pm 1 \}^n$, applying
$w$ to a set of simple roots does not change the number of involved coordinates. This
contradiction shows that $w$ must map $D_{n_0}^{(2)}$ to $D_{m_0}^{(2)}$ if $n_0 \geq 2$.

For the same reason, if $m_0 \geq 2$, then $w^{-1} D_{m_0}^{(2)}$ must be contained in 
$D_{n_0}^{(2)}$. Hence $n_0 = m_0$ and $w D_{n_0}^{(2)} = D_{m_0}^{(2)}$ whenever 
$n_0 \geq 2$ or $m_0 \geq 2$. Obviously the same conclusion holds in the remaining case 
$n_0 = m_0 = 0$.

Consider the sets of simple roots
\[
I' := A_{n_1} \times \cdots \times A_{n_k} \quad \text{and} \quad
J' := A_{m_1} \times \cdots \times A_{m_l} 
\]
They are associated by $w \in W(D_n)$, so $(n_1,\ldots,n_k) = (m_1,\ldots,m_l)$ as multisets.
Then there exists a $w' \in S_{n - n_0 - 1}$ (or in $S_{n - 2}$ if $n_0 = 0$) with
$w' I' = J'$. Such a $w'$ commutes with the diagram automorphism, so
$w' I = J$ with $w' \in W(D_n)^{\Gamma_K} = W(\mc G,\mc S)$.\\

\textbf{Type $D_4^{(3)}$.}
The cardinality of $I$ is 0, 1, 3 or 4, and for all these sizes there is a unique 
$\Gamma_K$-stable subset of the Dynkin diagram $D_4^{(3)}$. Hence $\mc L_I$ is completely
characterized by its rank $|I| = \mr{rk}(\Phi (\mc L_I,\mc T))$. For each possible rank
there is a unique $\mc G (K)$-conjugacy class of Levi $K$-subgroups, and those Levi subgroups
definitely cannot be $\mc G (\overline K)$-conjugate to Levi subgroups of other ranks.\\

\textbf{Type $E_6^{(2)}$.}
We label the Dynkin diagram as
\[
\begin{array}{c}
\alpha_2 \\ 
\mid \\
\alpha_1 - \alpha_3 - \alpha_4 - \alpha_5 - \alpha_6 
\end{array}
\]
The nontrivial automorphism $\gamma$ exchanges $\alpha_1$ with $\alpha_6$ and $\alpha_3$ with 
$\alpha_5$. Since $\mc L_I$ and $\mc L_J$ are $\mc G (\overline K)$-conjugate, they have 
the same rank $|I| = |J|$. When $|I| = 0$ or $|I| = 6$, this already shows that $J = I$.

For the remaining ranks, we will check that the $W(E_6)$-association classes of $\Gamma_K$-stable
subsets of $E_6$ of that rank are exactly the $W(E_6)^{\Gamma_K}$-association classes. That
suffices, for it implies that the $W(E_6)$-associate sets $I$ and $J$ are already associated 
by an element of $W(E_6)^{\Gamma_K}$.

For $|I| = 1$, the options are $\{\alpha_2\}$ and $\{\alpha_4\}$. These sets 
are associated by an element $w_2 \in \langle s_{\alpha_2},s_{\alpha_4} \rangle \cong S_3$. 
As $\alpha_2$ and $\alpha_4$ are fixed by $\Gamma_K$, $w_2 \in W(E_6)^{\Gamma_K}$.
Hence there is only one $W(E_6)^{\Gamma_K}$-association class of $I$'s of rank 1.

When $|I| = 2$, the possible sets of simple roots are 
\[
I_{2,1} = \{\alpha_2,\alpha_4\} ,\; I_{2,2} = \{ \alpha_3, \alpha_5\} \,\; 
I_{2,3} = \{ \alpha_1, \alpha_6\}.
\]
Among these $I_{2,1} \cong A_2$ is the only connected Dynkin diagram, so it is not 
$W(E_6)$-associate to the other two. Pick 
$w_1 \in \langle s_{\alpha_1},s_{\alpha_3} \rangle \cong S_3$ with
$w_1 (\alpha_1) = \alpha_3$. Then $(\gamma (w_1))(\alpha_6) = \alpha_5$ and
$w_1 \gamma (w_1) \in W(E_6)^{\Gamma_K}$. We conclude the $W(E_6)$-association classes on\\
$\{I_{2,1}, I_{2,2}, I_{2,3} \}$ are exactly the $W(E_6)^{\Gamma_K}$-association classes.

In the case $|I| = 3$, the possibilities are 
\[
I_{3,1} = \{ \alpha_3, \alpha_4, \alpha_5\},\;
I_{3,2} = \{ \alpha_2, \alpha_3, \alpha_5\},\; 
I_{3,3} = \{ \alpha_1, \alpha_2, \alpha_6\},\; 
I_{3,4} = \{ \alpha_1, \alpha_4, \alpha_6\}. 
\]
Among these $I_{3,1} \cong A_3$ is the only connected diagram, so it is not $W(E_6)$-associate 
to the other three. The sets $I_{3,2}$ and $I_{3,3}$ are associated via $w_1 \gamma (w_1)$, 
while the sets $I_{3,3}$ and $I_{3,4}$ are associated via $w_2$ (as above). Hence $\{ I_{3,2},
I_{3,3}, I_{3,4}\}$ forms one $W(E_6)^{\Gamma_K}$-association class and one
$W(E_6)$-association class.

If $I$ has rank 4, it is one of
\begin{align*}
& \{\alpha_1,\alpha_3,\alpha_5,\alpha_6\} \cong A_2 \times A_2 ,\\
& \{\alpha_1,\alpha_2,\alpha_4,\alpha_6\} \cong A_2 \times A_1 \times A_1,\\
& \{\alpha_2,\alpha_3,\alpha_4,\alpha_5\} \cong A_4 .
\end{align*}
These three are mutually non-isomorphic, so they form three association classes, both
for $W(E_6)$ and for $W(E_6)^{\Gamma_K}$.

When $|I| = 5$, we have the options
\[
E_6 \setminus \{\alpha_2\} \cong A_5 \quad \text{and} \quad
E_6 \setminus \{\alpha_4\} \cong A_2 \times A_2 \times A_1 .
\]
These are not isomorphic, so they form two association classes, both
for $W(E_6)$ and for $W(E_6)^{\Gamma_K}$.
\end{proof}
\vspace{2mm}

\section{Connected linear algebraic groups}
\label{sec:2}

The previous results about reductive groups can be generalized to all linear algebraic groups.
This relies mainly on t{he theory initiated by Borel and Tits \cite{BoTi}, and worked out
much further by Conrad, Gabber and Prasad \cite{CGP,CP}.

Let $\mc G$ be a connected linear algebraic $K$-group. We recall from \cite[Theorem 4.3.7]{Spr}
that $\mc G$ is irreducible and smooth as $K$-variety. In particular it is a smooth affine
group -- the terminology used in \cite{CGP}.

When $\mc G$ has a Levi decomposition, it is clear how Levi subgroups of $\mc G$ can be defined: 
as a Levi subgroup (in the sense of the previous section) of a Levi factor of $\mc G$. However, 
there exist linear algebraic groups that do not admit any Levi decomposition, even over 
$\overline K$ \cite[Appendix A.6]{CGP}. For those we do not know a good notion of Levi subgroups.

Instead we investigate a closely related kind of subgroups, already present in \cite{Spr}. 
Fix a $K$-rational cocharacter $\lambda : GL_1 \to \mc G$ and put
\begin{align*}
& \mc P_{\mc G}(\lambda) = \{ g \in \mc G : \lim_{a \to 0} \lambda (a) g \lambda (a)^{-1}
\text{ exists in } \mc G \} , \\
& \mc U_{\mc G}(\lambda) = \{ g \in \mc G : \lim_{a \to 0} \lambda (a) g \lambda (a)^{-1} = 1 \} ,\\
& \mc Z_{\mc G}(\lambda) = \mc P_{\mc G}(\lambda) \cap \mc P_{\mc G}(\lambda^{-1})
\end{align*}
These are $K$-subgroups of $\mc G$ \cite[Lemma 2.1.5]{CGP}. Moreover $\mc U_{\mc G}(\lambda)$ 
is $K$-split unipotent \cite[Proposition 2.1.10]{CGP}, and there 
is a Levi-like decomposition \cite[Proposition 2.1.8]{CGP}
\begin{equation}\label{eq:2.1}
\mc P_{\mc G}(\lambda) = \mc Z_{\mc G}(\lambda) \ltimes \mc U_{\mc G}(\lambda) . 
\end{equation}
By \cite[Lemma 2.1.5]{CGP} $Z_{\mc G}(\lambda)$ is the (scheme-theoretic) centralizer of  
$\lambda (GL_1)$, a $K$-split torus in $\mc G$. More generally, if $\mc S'$ is any $K$-split
torus in $\mc G$, $Z_{\mc G}(\mc S')$ is of the form $Z_{\mc G}(\lambda)$. Namely, for a
$K$-rational cocharacter $\lambda : GL_1 \to \mc S'$ whose image does not lie in the kernel 
of any of the roots of $(\mc G,\mc S')$.

Let $\mc R_{u,K}(\mc G)$ denote the unipotent $K$-radical of $\mc G$. By definition,
a pseudo-parabolic $K$-subgroup of $\mc G$ is a group of the form
\[
\mc P_\lambda := \mc P_{\mc G}(\lambda) \mc R_{u,K}(\mc G)
\quad \text{for some $K$-rational cocharacter } \lambda : GL_1 \to \mc G .
\]
Similarly we define
\[
\mc L_\lambda := \mc P_\lambda \cap \mc P_{\lambda^{-1}} = Z_{\mc G}(\lambda) \mc R_{u,K}(\mc G) . 
\]
We call $\mc L_\lambda$ a pseudo-Levi subgroup of $\mc G$. Just a like a Levi subgroup of a
reductive group is intersection of a parabolic subgroup with an opposite parabolic, a pseudo-Levi
subgroup is the interesection of a pseudo-parabolic subgroup with an opposite pseudo-parabolic.
We note that $\mc L_\lambda$ contains the centralizer of the $K$-split torus $\lambda (GL_1)$, 
but it may be strictly larger than the latter.

Unfortunately the groups $\mc P_\lambda$ and $\mc L_\lambda$ do in general not fit in a 
decomposition like \eqref{eq:2.1}, because $\mc U_{\mc G}(\lambda)$ may intersect 
$\mc R_{u,K}(\mc G)$ nontrivially. When $\mc G$ is pseudo-reductive over $K$ (that is,
$\mc R_{u,K}(\mc G) = 1$), the groups $\mc P_\lambda$ and $\mc L_\lambda$ coincide with
$\mc P_{\mc G}(\lambda)$ and $Z_{\mc G}(\lambda)$, respectively. In view of the remarks after
\eqref{eq:2.1}, the pseudo-Levi $K$-subgroups of a pseudo-reductive group are precisely the
centralizers of the $K$-split tori in that group.

More specifically, when $\mc G$ is reductive, the $\mc P_\lambda$ are precisely the parabolic 
subgroups of $\mc G$ \cite[Proposition 2.2.9]{CGP}, the $\mc L_\lambda$ are the Levi subgroups 
of $\mc G$ and \eqref{eq:2.1} is an actual Levi 
decomposition of $\mc P_\lambda$. This justifies our terminology ``pseudo-Levi subgroup''.

The notions pseudo-parabolic and pseudo-Levi are preserved under separable extensions
of the base field $K$ \cite[Proposition 1.1.9]{CGP}, but not necessarily under inseparable
base-change. This is caused by the corresponding behaviour of the unipotent $K$-radical. 

We consider the $K$-group $\mc G' := \mc G / \mc R_{u,K}(\mc G)$, the maximal pseudo-reductive
quotient of $\mc G$.

\begin{lem}\label{lem:2.1}
There is a natural bijection between the sets of pseudo-parabolic $K$-subgroups of $\mc G$
and of $\mc G'$. It remains a bijection if we take $K$-rational conjugacy classes on both
sides.
\end{lem}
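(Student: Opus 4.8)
The plan is to exploit the quotient map $\pi : \mc G \to \mc G' = \mc G / \mc R_{u,K}(\mc G)$ and the fact that pseudo-parabolic subgroups are built from the $\mc P_{\mc G}(\lambda)$ by enlarging with the unipotent $K$-radical. First I would fix a $K$-rational cocharacter $\lambda : GL_1 \to \mc G$ and observe that $\pi \circ \lambda$ is a $K$-rational cocharacter of $\mc G'$; conversely, since $GL_1$ is (split) of multiplicative type and $\mc R_{u,K}(\mc G)$ is $K$-split unipotent, every $K$-rational cocharacter $\lambda'$ of $\mc G'$ lifts to a $K$-rational cocharacter of $\mc G$ (one lifts to a torus in $\mc G$ mapping isomorphically onto the image torus, using that extensions of a torus by a split unipotent group are split — this is the key splitting input). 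Next I would use \cite[Proposition 2.1.8(3)]{CGP} (functoriality of the $\mc P_{\mc G}(\lambda)$ construction under quotients), which gives $\pi(\mc P_{\mc G}(\lambda)) = \mc P_{\mc G'}(\pi \circ \lambda)$ and, more precisely, $\pi^{-1}(\mc P_{\mc G'}(\pi\circ\lambda)) = \mc P_{\mc G}(\lambda)\,\mc R_{u,K}(\mc G) = \mc P_\lambda$, since $\ker\pi = \mc R_{u,K}(\mc G)$ lies in $\mc P_{\mc G}(\lambda)\mc R_{u,K}(\mc G)$ and the preimage of a parabolic-type subgroup under $\pi$ picks up exactly the kernel. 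This identifies the pseudo-parabolic $K$-subgroup $\mc P_\lambda$ of $\mc G$ with the full preimage under $\pi$ of the pseudo-parabolic $K$-subgroup $\mc P_{\pi\circ\lambda} = \mc P_{\mc G'}(\pi\circ\lambda)$ of $\mc G'$ (the latter has trivial unipotent $K$-radical, so its pseudo-parabolics are just the $\mc P_{\mc G'}(\lambda')$).

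The bijection is then $\mc P_\lambda \mapsto \pi(\mc P_\lambda) = \mc P_{\mc G'}(\pi\circ\lambda)$ in one direction and $\mc Q \mapsto \pi^{-1}(\mc Q)$ in the other. I would check it is well-defined (independent of the choice of $\lambda$ representing a given pseudo-parabolic: if $\mc P_\lambda = \mc P_\mu$ then applying $\pi$ and using that $\pi^{-1}$ recovers them shows $\mc P_{\mc G'}(\pi\circ\lambda) = \mc P_{\mc G'}(\pi\circ\mu)$), that the two maps are mutually inverse (one composite is the identity because $\pi$ is surjective and $\mc P_\lambda = \pi^{-1}(\pi(\mc P_\lambda))$ by the preimage computation above; the other because every $K$-rational cocharacter of $\mc G'$ lifts), and that it genuinely lands among pseudo-parabolic $K$-subgroups on each side — on the $\mc G'$ side this is immediate, and on the $\mc G$ side one needs that $\pi^{-1}(\mc P_{\mc G'}(\lambda'))$ equals $\mc P_{\mc G}(\tilde\lambda)\mc R_{u,K}(\mc G)$ for a lift $\tilde\lambda$, which is exactly the preimage formula again.

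For the second assertion, conjugation is compatible with $\pi$: for $g \in \mc G(K)$ one has $\pi(g\mc P_\lambda g^{-1}) = \pi(g)\,\pi(\mc P_\lambda)\,\pi(g)^{-1}$, so the bijection descends to a map on $\mc G(K)$-conjugacy classes, and surjectivity of $\mc G(K) \to \mc G'(K)$ is needed to see it is surjective on conjugacy classes. Here one must be slightly careful: $\mc G(K) \to \mc G'(K)$ is surjective because $\mc R_{u,K}(\mc G)$ is $K$-split unipotent and hence has trivial Galois cohomology $H^1(K, \mc R_{u,K}(\mc G)) = 1$ \cite[Lemma 2.2.3]{CGP} or the standard fact that split unipotent groups have vanishing $H^1$ over any field; this gives exactness of $1 \to \mc R_{u,K}(\mc G)(K) \to \mc G(K) \to \mc G'(K) \to 1$. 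Injectivity on conjugacy classes follows because if $\pi(\mc P_\lambda)$ and $\pi(\mc P_\mu)$ are conjugate by $\pi(g)$, then $\pi(g\mc P_\lambda g^{-1}) = \pi(\mc P_\mu)$, and applying $\pi^{-1}$ (using the well-definedness and the preimage formula) gives $g\mc P_\lambda g^{-1} = \mc P_\mu$. I expect the main obstacle to be pinning down the functoriality/preimage statement $\pi^{-1}(\mc P_{\mc G'}(\pi\circ\lambda)) = \mc P_{\mc G}(\lambda)\mc R_{u,K}(\mc G)$ cleanly from \cite{CGP}, and the cocharacter-lifting step, since both rely on the structure of extensions by split unipotent groups; everything else is formal bookkeeping with the quotient map and Galois cohomology.
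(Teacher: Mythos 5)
Your construction of the bijection itself---sending $\mc P_\lambda$ to its image in $\mc G'$ and recovering it as the full preimage $\pi^{-1}\big(\mc P_{\mc G'}(\pi\circ\lambda)\big) = \mc P_{\mc G}(\lambda)\,\mc R_{u,K}(\mc G) = \mc P_\lambda$---is exactly the map the paper uses; the paper simply cites \cite[Proposition 2.2.10]{CGP} for the preimage formula and the correspondence of cocharacters, which you unpack by hand. That part is fine.

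The gap is in the assertion about conjugacy classes. You deduce surjectivity of $\mc G(K)\to\mc G'(K)$ from the claim that $\mc R_{u,K}(\mc G)$ is $K$-split and hence has trivial $H^1$. The premise is false in general: over an imperfect field $K$ the unipotent $K$-radical need not be $K$-split---it can have a nontrivial $K$-wound part, and $K$-wound smooth connected unipotent groups can have nontrivial (even infinite) $H^1$. Consequently $\mc G(K)\to\mc G'(K)$ is not known to be surjective, and your lift of a conjugating element $g'\in\mc G'(K)$ to $\mc G(K)$ is unjustified. (Over perfect fields everything is split and your argument works, but then $\mc G'$ is reductive; the content of this section lies in the imperfect case.) The paper circumvents this by never lifting group elements: it uses that $\mc P_\lambda$ is its own normalizer and that $\mc G(K)\to(\mc G/\mc P_\lambda)(K)$ is surjective \cite[Lemma C.2.1]{CGP}---a statement that relies only on the $K$-split unipotent group $\mc U_{\mc G}(-\lambda)$ providing local sections of $\mc G\to\mc G/\mc P_\lambda$, not on the full unipotent radical---together with the identification of $K$-varieties $\mc G/\mc P_\lambda\cong\mc G'/\mc P'_\lambda$ from \cite[Proposition 2.2.10]{CGP}. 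This gives $\mc G(K)/\mc P_\lambda(K)\cong\mc G'(K)/\mc P'_\lambda(K)$ and hence the matching of rational conjugacy classes directly. You should replace your $H^1$ step by this flag-variety argument.
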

\begin{proof}
The map sends $\mc P_\lambda$ to $\mc P'_\lambda := \mc P_\lambda / \mc R_{u,K}(\mc G)$.
It is bijective by \cite[Proposition 2.2.10]{CGP}. According to \cite[Proposition 3.5.7]{CGP}
every pseudo-parabolic subgroup of $\mc G$ (or of $\mc G'$) is its own scheme-theoretic
normalizer. Hence the variety of $\mc G (K)$-conjugates of $\mc P_\lambda$ is
$\mc G (K) / \mc P_\lambda (K)$. By \cite[Lemma C.2.1]{CGP} this is isomorphic with
$(\mc G / \mc P_\lambda ) (K)$. Next \cite[Proposition 2.2.10]{CGP} tells us that the 
$K$-varieties $\mc G / \mc P_\lambda$ and $\mc G' / \mc P'_\lambda$ can be identified. We obtain
\[
\mc G (K) / \mc P_\lambda (K) \cong (\mc G / \mc P_\lambda ) (K) \cong 
(\mc G' / \mc P_\lambda' ) (K) \cong \mc G' (K) / \mc P'_\lambda (K) ,
\]
where the right hand side can be interpreted as the variety of $\mc G' (K)$-conjugates of
$\mc P'_\lambda$. It follows that two pseudo-parabolic $K$-subgroups $\mc P_\lambda$ and
$\mc P_\mu$ are $\mc G (K)$-conjugate if and only if  $\mc P'_\lambda$ and 
$\mc P'_\mu$ are $\mc G' (K)$-conjugate.
\end{proof}

The setup from the start of Section \ref{sec:1} (with $\mc S, \mc T, \Delta_0, 
\ldots$) remains valid for the current $\mc G$, when we reinterpret $\mc B$ as a minimal 
pseudo-parabolic $K_s$-subgroup of $\mc G$. (Also, the $K$-group $Z_{\mc G}(\mc S)$ is not 
always pseudo-Levi in $\mc G$, for that we still have to add $\mc R_{u,K}(\mc G)$ to it.)
We refer to \cite[Proposition C.2.10 and Theorem C.2.15]{CGP} for the proofs in this generality.

The set of simple roots $\Delta_K$ for $(\mc G,\mc S)$ can again be identified with
$(\Delta \setminus \Delta_0 ) / \mu_{\mc B}(\Gamma_K)$.
For every $\mu_{\mc B}(\Gamma_K)$-stable subset $I$ of $\Delta$ containing $\Delta_0$
we get a standard pseudo-parabolic $K$-subgroup $\mc P_I$ of $\mc G$. By Lemma \ref{lem:2.1} 
and \cite[Theorem 15.4.6]{Spr} every pseudo-parabolic $K$-subgroup is $\mc G (K)$-conjugate
to a unique such $\mc P_I$. The unicity implies that two pseudo-parabolic $K$-subgroups of $\mc G$ 
are $\mc G (K)$-conjugate if and only if they are $\mc G (K_s)$-conjugate. (Recall that by
\cite[Proposition 3.5.2.ii]{CGP} pseudo-parabolicity is preserved under base change from $K$ to 
$K_s$.) By \cite[Proposition 3.5.4]{CGP} (which can only be guaranteed when the fields are
separably closed, as pointed out to us by Gopal Prasad), $\mc G (K_s)$-conjugacy of 
pseudo-parabolic subgroups is equivalent to $\mc G (K)$-conjugacy.

Write $\mc P_I = \mc P_{\lambda_I}$ for some $K$-rational homomorphism 
$\lambda_I : GL_1 \to \mc S$. It is easy to see (from \cite[Lemma 15.4.4]{Spr} and Lemma
\ref{lem:2.1}) that $\mc P_{\lambda_I^{-1}}$ does not depend 
on the choice of $\lambda_I$, and we may denote it by $\mc P_{-I}$. Then we define
\[
\mc L_I := \mc P_I \cap \mc P_{-I} = 
\mc P_{\lambda_I} \cap \mc P_{\lambda_I^{-1}} = \mc L_{\lambda_I} . 
\]
We call $\mc L_I$ a standard pseudo-Levi subgroup of $\mc G$. It is the inverse image,
with respect to the quotient map $\mc G \to \mc G'$, of the (standard pseudo-Levi) 
$K$-subgroup of $\mc G'$ called $L_I$ in \cite[Lemma 15.4.5]{Spr}. In the introduction we
called this $\mc L_{I_K}$, which relates to $\mc L_I$ by $I_K = (I \setminus \Delta_0) 
/ \mu_{\mc B}(\Gamma_K)$.

We are ready to generalize Lemma \ref{lem:1}.

\begin{lem}\label{lem:2.2}
\enuma{
\item Every pseudo-Levi $K$-subgroup of $\mc G$ is $\mc G (K)$-conjugate to a standard Levi
$K$-subgroup of $\mc G$.
\item For two standard pseudo-Levi $K$-subgroups $\mc L_I$ and $\mc L_J$ the following are
equivalent:
\begin{enumerate}[(i)]
\item $\mc L_I$ and $\mc L_J$ are $\mc G (K)$-conjugate;
\item $(I \setminus \Delta_0) / \mu_{\mc B}(\Gamma_K)$ and 
$(J \setminus \Delta_0) / \mu_{\mc B}(\Gamma_K)$ are $W (\mc G, \mc S)$-associate.
\end{enumerate} }
\end{lem}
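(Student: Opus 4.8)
The plan is to reduce everything to the pseudo-reductive quotient $\mc G' = \mc G / \mc R_{u,K}(\mc G)$ and then to the already-understood combinatorics of pseudo-parabolic subgroups via Lemma~\ref{lem:2.1}. The key observation is that the quotient map $q : \mc G \to \mc G'$ sends pseudo-Levi subgroups to pseudo-Levi subgroups and, because $\mc R_{u,K}(\mc G) \subset \mc L_\lambda$ for every $\lambda$ (as $\mc L_\lambda = Z_{\mc G}(\lambda)\,\mc R_{u,K}(\mc G)$ by definition), a pseudo-Levi $K$-subgroup of $\mc G$ is exactly the full preimage $q^{-1}(\mc L')$ of a pseudo-Levi $K$-subgroup $\mc L'$ of $\mc G'$. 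Thus $\mc L \mapsto q(\mc L)$ is a bijection between pseudo-Levi $K$-subgroups of $\mc G$ and those of $\mc G'$, and — using \cite[Lemma C.2.1]{CGP} together with the fact (\cite[Proposition 3.5.7]{CGP}) that pseudo-parabolic, hence pseudo-Levi, subgroups are their own scheme-theoretic normalizers, so that $\mc G(K)$-orbits are described by $(\mc G / \mc L)(K)$ exactly as in the proof of Lemma~\ref{lem:2.1} — it descends to a bijection on $\mc G(K)$-conjugacy classes. Under this bijection $\mc L_I$ corresponds to the standard pseudo-Levi $L_I$ of $\mc G'$ introduced after the statement. Since $\mc G'$ is pseudo-reductive, $L_I = Z_{\mc G'}(\lambda_I)$, so both parts of the lemma for $\mc G$ follow from the corresponding statements for $\mc G'$, and we are reduced to the pseudo-reductive case.

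For pseudo-reductive $\mc G$, part~(a) goes as follows. Every pseudo-Levi $K$-subgroup has the form $\mc L_\lambda = \mc P_\lambda \cap \mc P_{\lambda^{-1}}$. By Lemma~\ref{lem:2.1} and \cite[Theorem 15.4.6]{Spr}, $\mc P_\lambda$ is $\mc G(K)$-conjugate to a unique standard $\mc P_I$; conjugating by the same element carries $\mc P_{\lambda^{-1}}$ to a pseudo-parabolic opposite to $\mc P_I$, which one must identify with $\mc P_{-I}$. Here one uses that any two pseudo-parabolic $K$-subgroups of $\mc G$ opposite to $\mc P_I$ (i.e.\ whose intersection with $\mc P_I$ is a common pseudo-Levi) are conjugate under $\mc P_I(K)$ — the pseudo-reductive analogue of \cite[Proposition 16.1.1]{Spr}, which is available through the Bruhat-type structure theory in \cite[Theorem C.2.15]{CGP} and the description of $\mc Z_{\mc G}(\lambda_I)$ as a common pseudo-Levi factor. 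Hence $\mc L_\lambda$ is $\mc G(K)$-conjugate to $\mc L_I = \mc P_I \cap \mc P_{-I}$.

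For part~(b), the implication (ii)$\Rightarrow$(i) is the easy direction: lifting $w \in W(\mc G,\mc S)$ to $\bar w \in N_{\mc G(K)}(\mc S)$ via \eqref{eq:1.1}, one checks that $\bar w \mc L_I \bar w^{-1}$ is a pseudo-Levi $K$-subgroup containing $Z_{\mc G}(\mc S)$ with root datum relative to $\mc S$ equal to that of $\mc L_J$, and then that a standard pseudo-Levi is determined by the subset $(I\setminus\Delta_0)/\mu_{\mc B}(\Gamma_K)$ of $\Delta_K$ (via \cite[Lemma 15.4.5]{Spr}), forcing $\bar w \mc L_I \bar w^{-1} = \mc L_J$. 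For (i)$\Rightarrow$(ii), if $g \mc L_I g^{-1} = \mc L_J$ with $g \in \mc G(K)$, then $g \mc S g^{-1}$ is a maximal $K$-split torus of $\mc L_J$; maximal $K$-split tori of $\mc L_J$ are $\mc L_J(K)$-conjugate (\cite[Theorem C.2.3]{CGP}), so after adjusting $g$ on the left by an element of $\mc L_J(K)$ we may assume $g \in N_{\mc G}(\mc S)$, with image $w_1 \in W(\mc G,\mc S)$. Then $w_1$ carries the simple roots of $(\mc L_I,\mc S)$ to a basis of $\Phi(\mc L_J,\mc S)$, and composing with a suitable element of $W(\mc L_J,\mc S) \subset W(\mc G,\mc S)$ (any two bases of a root system are Weyl-associate) we get $w \in W(\mc G,\mc S)$ with $w\big((I\setminus\Delta_0)/\mu_{\mc B}(\Gamma_K)\big) = (J\setminus\Delta_0)/\mu_{\mc B}(\Gamma_K)$. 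This is the verbatim transcription of the proof of Lemma~\ref{lem:1}.b, with the cited references replaced by their \cite{CGP} counterparts.

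The main obstacle is locating, within \cite{CGP,CP}, the precise pseudo-reductive substitutes for the two structural facts from \cite{Spr} that the reductive proof leans on: the conjugacy under $\mc P_I(K)$ of pseudo-parabolic subgroups opposite to a fixed $\mc P_I$ (replacing \cite[Proposition 16.1.1]{Spr}), and the $\mc L_J(K)$-conjugacy of maximal $K$-split tori in $\mc L_J$ together with the identification $W(\mc G,\mc S) \cong N_{\mc G}(\mc S)/Z_{\mc G}(\mc S)$ in this setting (replacing \cite[Theorems 15.2.6, 15.3.1]{Spr}). Once these are in hand the arguments are formally identical to those for Lemma~\ref{lem:1}; everything else is bookkeeping with the quotient $\mc G \to \mc G'$.
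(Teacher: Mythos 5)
Your reduction to the pseudo-reductive quotient $\mc G'$ rests on a false claim: pseudo-Levi subgroups are \emph{not} their own scheme-theoretic normalizers. Already for $\mc G = SL_2$ the diagonal torus is a (pseudo-)Levi subgroup whose normalizer contains a representative of the nontrivial Weyl element; in general $N_{\mc G}(\mc L_\lambda)/\mc L_\lambda$ surjects onto the stabilizer of $\Phi(\mc L_\lambda,\mc S)$ in the relative Weyl group. The inference ``pseudo-parabolic, hence pseudo-Levi, subgroups are self-normalizing'' is a non sequitur, and with it the identification of the set of $\mc G(K)$-conjugates of $\mc L_\lambda$ with $(\mc G/\mc L_\lambda)(K)$ collapses; \cite[Lemma C.2.1]{CGP} is specific to pseudo-parabolic subgroups and does not apply to $\mc G/N_{\mc G}(\mc L_\lambda)$. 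Note also that you cannot instead lift a conjugating element from $\mc G'(K)$ to $\mc G(K)$, since $\mc R_{u,K}(\mc G)$ need not be $K$-split and $H^1(K,\mc R_{u,K}(\mc G))$ may be nonzero. The paper sidesteps all of this by proving Lemma \ref{lem:2.2} directly in $\mc G$ and only afterwards (Lemma \ref{lem:2.7}) deducing the conjugacy-class bijection with $\mc G'$ \emph{from} Lemma \ref{lem:2.2}.b; your order of deduction would make that route circular. Fortunately the reduction is also unnecessary: the arguments in your second and third paragraphs make sense for arbitrary $\mc G$ once one keeps the factor $\mc R_{u,K}(\mc G)$ inside $\mc P_\lambda$ and $\mc L_\lambda$.

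For part (a) you then lean on the statement that any two pseudo-parabolic $K$-subgroups opposite to $\mc P_I$ are $\mc P_I(K)$-conjugate, which you correctly identify as the missing ingredient but do not supply. The paper takes a different and self-contained route: having conjugated $\mc P_\lambda$ to $\mc P_I$, it uses the $\mc P_I(K)$-conjugacy of maximal $K$-split tori \cite[Theorem C.2.3]{CGP} to move the image of $\lambda$ into $\mc S$, and then compares the $\mr{Ad}(\lambda)$- and $\mr{Ad}(\lambda_I)$-weight spaces on the Lie algebra of $\mc G/\mc R_{u,K}(\mc G)$ to conclude the equality $\mc L_\lambda = \mc L_I$ on the nose (this computation is also what later justifies the assertion, used in Proposition \ref{prop:2.8}, that all pseudo-Levi factors of $\mc P_I$ are $\mc P_I(K)$-conjugate). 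Your treatment of part (b) does agree with the paper, which likewise runs the proof of Lemma \ref{lem:1}.b verbatim using \cite[Theorem C.2.3 and Proposition C.2.10]{CGP}.
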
 
\begin{proof}
(a) Let $\mc L_\lambda$ be a pseudo-Levi $K$-subgroup of $\mc G$. Because $\mc P_\lambda$
is $\mc G (K)$-conjugate to a standard pseudo-parabolic $K$-subgroup $\mc P_I$ of $\mc G$, 
we may assume that 
\[
\mc L_\lambda \subset \mc P_\lambda = \mc P_I. 
\]
Since all maximal $K$-split tori of $\mc P_I$ are $\mc P_I (K)$-conjugate 
\cite[Theorem C.2.3]{CGP}, we may  further assume that the image of $\lambda$ is 
contained in $\mc S$. By \cite[Corollary 2.2.5]{CGP} the $K$-split unipotent radical 
$\mc R_{us,K}(\mc P_I)$ equals both $\mc U_{\mc G}(\lambda_I) \mc R_{u,K}(\mc G)$ and 
$\mc U_{\mc G}(\lambda) \mc R_{u,K}(\mc G)$ By \cite[Lemma 15.4.4]{Spr} the Lie algebra of 
$\mc P_I / \mc R_{u,K}(\mc G)$
can be analysed in terms of the weights for the adjoint action Ad$(\lambda)$ of $GL_1$ on
the Lie algebra of $\mc G'$. Namely, $\mc P_I / \mc R_{u,K}(\mc G)$ corresponds to the sum
of the subspaces on which $GL_1$ acts by characters $a \mapsto a^n$ with $n \in \Z_{\geq 0}$.
The Lie algebra of the subgroup 
\[
\mc R_{us,K}(\mc P_I) \mc R_{u,K}(\mc G) / \mc R_{u,K}(\mc G)
\]
is the sum of the subspaces on which Ad$(\lambda)$ acts as $a \mapsto a^n$ with $n \in \Z_{>0}$.
From \eqref{eq:2.1} inside $\mc G'$ we deduce that the Lie algebra of 
$\mc L_I / \mc R_{u,K}(\mc G)$ is the direct sum of the Lie algebra of $Z_{\mc G'}(\mc S)$ and 
the root spaces for roots $\alpha$ with $\langle \alpha ,\, \lambda \rangle = 0$. This holds for 
both $\lambda$ and $\lambda_I$, from which we conclude that $\mc L_\lambda = \mc L_I$.\\
(b) This can shown just as Lemma \ref{lem:1}.b, using in particular that the natural
map $N_{\mc G}(\mc S)(K) \to W(\mc G,\mc S)$ is surjective \cite[Proposition C.2.10]{CGP}.
\end{proof}

\begin{lem}\label{lem:2.7}
There is a natural bijection between the sets of pseudo-Levi $K$-subgroups of $\mc G$ and of 
$\mc G'$. It remains a bijection if we take $K$-rational conjugacy classes on both sides.
\end{lem}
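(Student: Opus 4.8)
The plan is to mimic the proof of Lemma \ref{lem:2.1}, now for pseudo-Levi subgroups instead of pseudo-parabolic subgroups. First I would define the map: send a standard pseudo-Levi $K$-subgroup $\mc L_I$ of $\mc G$ to $\mc L_I' := \mc L_I / \mc R_{u,K}(\mc G)$, which by the discussion preceding Lemma \ref{lem:2.2} is exactly the standard pseudo-Levi $K$-subgroup of $\mc G'$ attached to the same $\mu_{\mc B}(\Gamma_K)$-stable subset $I \subset \Delta$ (there denoted $L_I$). More intrinsically, the map sends $\mc L_\lambda = \mc Z_{\mc G}(\lambda)\mc R_{u,K}(\mc G)$ to $\mc L_\lambda' := \mc L_\lambda / \mc R_{u,K}(\mc G)$; since $\mc R_{u,K}(\mc G) \subset \mc L_\lambda$ always, taking the quotient makes sense, and one checks that $\mc L_\lambda'$ is a pseudo-Levi subgroup of $\mc G'$ (it is $\mc Z_{\mc G'}(\bar\lambda)$ for the induced cocharacter $\bar\lambda$, using that $\mc G'$ is pseudo-reductive so that its pseudo-Levis are the $\mc Z_{\mc G'}(\mu)$). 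The inverse map sends a pseudo-Levi subgroup of $\mc G'$ to its preimage under $\mc G \to \mc G'$, which contains $\mc R_{u,K}(\mc G)$. These are mutually inverse, so we get the first bijection; combined with Lemma \ref{lem:2.2}.a (every pseudo-Levi is $\mc G(K)$-conjugate to a standard one, on both sides) this already reduces the conjugacy statement to standard pseudo-Levi subgroups, which are classified on both sides by the same data $(\Delta,\Delta_0,\mu_{\mc B})$.

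Next I would handle conjugacy classes. The cleanest route is: two standard pseudo-Levi subgroups $\mc L_I, \mc L_J$ of $\mc G$ are $\mc G(K)$-conjugate iff $(I\setminus\Delta_0)/\mu_{\mc B}(\Gamma_K)$ and $(J\setminus\Delta_0)/\mu_{\mc B}(\Gamma_K)$ are $W(\mc G,\mc S)$-associate, by Lemma \ref{lem:2.2}.b; and likewise $\mc L_I', \mc L_J'$ of $\mc G'$ are $\mc G'(K)$-conjugate iff the same sets are $W(\mc G',\mc S)$-associate. Since $\mc S$ maps isomorphically onto a maximal $K$-split torus of $\mc G'$ and the root datum with Galois action is unchanged under the quotient by $\mc R_{u,K}(\mc G)$, we have $W(\mc G,\mc S) = W(\mc G',\mc S)$ and the two associativity conditions literally coincide. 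Hence $\mc L_I \sim_{\mc G(K)} \mc L_J$ iff $\mc L_I' \sim_{\mc G'(K)} \mc L_J'$, and passing to arbitrary pseudo-Levi subgroups via Lemma \ref{lem:2.2}.a gives that the bijection descends to conjugacy classes.

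The main obstacle I anticipate is making precise that the map $\mc L_\lambda \mapsto \mc L_\lambda/\mc R_{u,K}(\mc G)$ is well-defined and surjective onto the pseudo-Levi subgroups of $\mc G'$ in the non-standard case — i.e. before reducing to standard representatives. Two points need care here: that $\mc R_{u,K}(\mc G)$ is always contained in $\mc L_\lambda$ (clear from $\mc L_\lambda = \mc Z_{\mc G}(\lambda)\mc R_{u,K}(\mc G)$, and from $\mc R_{u,K}(\mc G)$ being normal and contained in $\mc P_\mu$ for every $\mu$), and that $\mc Z_{\mc G}(\lambda)\mc R_{u,K}(\mc G)/\mc R_{u,K}(\mc G)$ really is of the form $\mc Z_{\mc G'}(\bar\lambda)$ — this uses that formation of $\mc Z_{\mc G}(\lambda)$ and $\mc P_{\mc G}(\lambda)$ is compatible with the quotient map, which is \cite[Proposition 2.1.8 and Corollary 2.1.9]{CGP} together with the fact that $\mc R_{u,K}(\mc G)$ is $K$-split unipotent and hence sits inside $\mc U_{\mc G}(\mu)\mc Z_{\mc G}(\mu)$ compatibly. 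Once these compatibilities are in hand, everything else is bookkeeping parallel to Lemma \ref{lem:2.1}, and in fact the quickest write-up simply invokes Lemma \ref{lem:2.2}.a on both sides to reduce immediately to the standard case, where the identification is definitional.
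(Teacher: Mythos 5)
Your proposal is correct and follows essentially the same route as the paper: the map is $\mc L_\lambda \mapsto \mc L_\lambda/\mc R_{u,K}(\mc G)$, bijectivity comes from the compatibility of cocharacters/tori with the quotient (the paper cites \cite[Proposition 2.2.10]{CGP}), and the statement on conjugacy classes follows from Lemma \ref{lem:2.2}.b together with the fact that $\mc G$ and $\mc G'$ have the same root system and Weyl group (the paper cites \cite[Theorem C.2.15]{CGP}). Your write-up merely spells out a few well-definedness details that the paper leaves implicit.
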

\begin{proof}
The map sends $\mc L_\lambda$ to $\mc L'_\lambda := \mc L_\lambda / \mc R_{u,K}(\mc G)$.
This map is bijective for the same reason as in with pseudo-parabolic subgroups: 
$\mc G$ and $\mc G'$ have essentially the same tori, see \cite[Proposition 2.2.10]{CGP}.

By \cite[Theorem C.2.15]{CGP} the $K$-groups $\mc G$ and $\mc G'$ have the same root system 
and the same Weyl group. Then Lemma \ref{lem:2.2}.b says that set of the conjugacy classes of
pseudo-Levi $K$-subgroups are parametrized by the same data for both groups. Hence the
map $\mc L_I = \mc L_{\lambda_I} \mapsto \mc L'_{\lambda_I} = \mc L'_I$ also induces a bijection
between these sets of conjugacy classes.
\end{proof}

In case $\mc G'$ is reductive, Lemmas \ref{lem:2.1} and \ref{lem:2.7} furnish bijections
\begin{equation}\label{eq:2.2}
\begin{array}{ccc}
\hspace{-3mm} \{ \text{parabolic $K$-subgroups of } \mc G' \} & \longleftrightarrow &
\{ \text{pseudo-parabolic $K$-subgroups of } \mc G \} \hspace{-5mm} \\
\mc P_\lambda / \mc R_{u,K}(\mc G) = P_{\mc G'}(\lambda) & \leftrightarrow & \mc P_\lambda \\
\\
\{ \text{Levi $K$-subgroups of } \mc G' \} & \longleftrightarrow &
\{ \text{pseudo-Levi $K$-subgroups of } \mc G \} \hspace{-5mm} \\
\mc L_\lambda / \mc R_{u,K}(\mc G) = Z_{\mc G'}(\lambda) & \leftrightarrow & \mc L_\lambda
\end{array} 
\end{equation}
which induce bijections between the $K$-rational conjugacy classes on both sides.

We will now start to work towards the main result of this section:

\begin{thm}\label{thm:2.7}
Let $\mc G$ be a connected linear algebraic $K$-group. Any two pseudo-Levi $K$-subgroups of 
$\mc G$ which are $\mc G (\overline K)$-conjugate are already $\mc G (K)$-conjugate.
\end{thm}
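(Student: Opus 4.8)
The plan is to reduce Theorem \ref{thm:2.7} to Theorem \ref{thm:2} (the reductive case) by passing through the maximal pseudo-reductive quotient $\mc G'$ and then to the standard presentation of an absolutely pseudo-simple group. First I would invoke Lemma \ref{lem:2.2}.a to assume both pseudo-Levi subgroups are standard, say $\mc L_I$ and $\mc L_J$, and recall from Lemma \ref{lem:2.7} (and the remark that $\mc G$ and $\mc G'$ share root datum and Weyl group) that it is enough to prove the statement for $\mc G'$; thus from the outset we may assume $\mc G$ is pseudo-reductive. At this point the ``geometric'' hypothesis needs care: over $\overline K$ the unipotent radical can grow, so $\mc G_{\overline K}$ need not be reductive and ``$\mc G(\overline K)$-conjugate'' must be read via the $\overline K$-pseudo-parabolic/pseudo-Levi formalism. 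I would first replace $\overline K$ by $K_s$, which is harmless since pseudo-reductivity and the pseudo-parabolic notions are insensitive to separable extensions (\cite[Proposition 1.1.9]{CGP}); over $K_s$ the group is still pseudo-reductive, and the same combinatorial description of conjugacy classes (in terms of $W(\mc G,\mc T)$-association of subsets of $\Delta$) applies, exactly as in the reductive case via Lemma \ref{lem:2.2}.b with $K_s$ in place of $K$.

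Next I would run the same three-step reduction used for Theorem \ref{thm:2}: reduce to quasi-split $\mc G$, reduce to absolutely pseudo-simple $\mc G$, and then handle the absolutely pseudo-simple case. The reduction to the quasi-split inner form should go through essentially verbatim, using the pseudo-reductive analogues of the tools in Lemmas \ref{lem:6} and \ref{lem:7} — pseudo-parabolic pairs are self-normalizing (\cite[Proposition 3.5.7]{CGP}), minimal pseudo-parabolic pairs are conjugate, and an inner twist of a pseudo-reductive group is pseudo-reductive. The reduction to the absolutely pseudo-simple case mirrors Lemma \ref{lem:3}: divide out the (pseudo-reductive) center, decompose $\mc G_{K_s}$ into a product of its absolutely pseudo-simple normal subgroups permuted transitively by $\Gamma_K$, and run the Weyl-group bookkeeping of \eqref{eq:1.3}--\eqref{eq:1.6}; the structure theory of \cite{CP} guarantees such a decomposition exists.

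The heart of the argument, and the main obstacle, is the absolutely pseudo-simple quasi-split case. Here the strategy is to exploit the classification of \cite{CP}: an absolutely pseudo-simple $K$-group has the same root system, Weyl group, and Galois action on $\Delta$ as its maximal reductive quotient over $\overline K$ (or more precisely, these data are governed by the same Dynkin-diagram-with-$\Gamma_K$-action), so the combinatorial problem — show that two $\Gamma_K$-stable subsets $I,J\subset\Delta$ that are $W(\mc G,\mc T)$-associate are already $W(\mc G,\mc S)=W(\mc G,\mc T)^{\Gamma_K}$-associate — is literally the same problem already solved case-by-case in Lemma \ref{lem:4}. The subtlety I anticipate is that in the non-reduced pseudo-reductive world there are extra phenomena (e.g. the possibility of a non-reduced root system $BC_n$, and the ``exotic'' and ``basic exotic'' constructions in characteristics $2$ and $3$), so I would need to check that no new Galois types of diagrams arise beyond those in Lemma \ref{lem:4}, and in particular that the $BC_n$ case — where the root system is its own dual and Levi subsets correspond to sub-diagrams of type $B$, $C$, or $A$ — does not spoil the argument; but since $W(BC_n)=W(B_n)=W(C_n)$ and the relevant diagram automorphisms are the same, the counting arguments of Lemma \ref{lem:4} carry over. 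Once the combinatorics is in place, Lemma \ref{lem:2.2}.b upgrades the $W(\mc G,\mc S)$-association to an honest $\mc G(K)$-conjugacy, completing the proof.
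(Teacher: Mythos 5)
Your overall architecture (reduce to the pseudo-reductive quotient, then to absolutely pseudo-simple groups, then twist to a quasi-split form and reuse the reductive combinatorics) matches the paper's for most cases, but it has one genuine gap that the paper spends the bulk of its effort repairing: \emph{not every pseudo-reductive group admits a quasi-split pseudo-inner form}. By \cite[Theorem C.2.10]{CP} this can fail when $\mathrm{char}(K)=2$ and $[K:K^2]>4$, and by the classification the exceptions are exactly certain absolutely pseudo-simple groups whose root system over $K_s$ has type $B_n$, $C_n$ or $BC_n$. Your step ``the reduction to the quasi-split inner form should go through essentially verbatim'' therefore breaks down precisely there, and no amount of Weyl-group bookkeeping recovers it: for these groups one cannot compare $\mc G$ with a quasi-split form at all. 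The paper's Proposition \ref{prop:2.8} isolates this as case (iii) and gives a separate, direct argument: after fixing the short simple root configuration one shows the relevant Weyl element must lie in a sub-$W(B_{n-m})$, and then the type-$A$ part is handled by realizing the index $(\Delta',\Delta'_0,\mathrm{triv})$ as that of an inner form $GL_{(n-m)/d}(D)$ of $GL_n$ via Tits's tables, where association of standard Levis can be read off from multisets of block sizes. Without something playing this role your proof is incomplete.

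Two smaller points. First, your passage from $\mc G(\overline K)$-conjugacy to $\mc G(K_s)$-conjugacy cannot be justified by invariance under separable extensions, since $\overline K/K_s$ is purely inseparable; the paper instead passes through the reductive quotient $\mc G^{\red}=\mc G/\mc R_u(\mc G)$ over $\overline K$ and identifies $\Phi(\mc G^{\red},\mc T)$ with the non-multipliable roots of $\Phi(\mc G,\mc T)$, so that the two Weyl groups and their association relations coincide (this is \eqref{eq:2.3} in Proposition \ref{prop:2.3}). Second, even where a quasi-split pseudo-inner form exists, the twisting cocycle takes values in $(\mathrm{Aut}^{sm}_{\mc D(\mc G)/K})^\circ$ rather than in $\mc G$ itself, so the analogue of Lemma \ref{lem:6}.c is not automatic: one needs the paper's Lemma \ref{lem:2.4}, which uses the structure of the automorphism scheme (\cite[Propositions 6.1.4, 6.1.7, 6.2.4]{CP}) to show that such an automorphism stabilizing $(\mc P_\lambda,\mc L_\lambda)$ is realized by $\mathrm{Ad}(g)$ with $g\in\mc L_\lambda(K_s)$. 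You should flag both of these as substantive steps rather than routine transcriptions of the reductive argument.
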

The main steps of our argument are:
\begin{itemize}
\item Reduction from the general case to absolutely pseudo-simple $K$-groups with trivial centre.
\item Proof when $\mc G$ quasi-split over $K$ (i.e. $\Delta_0$ is empty).
\item Proof for absolutely pseudo-simple $K$-groups with trivial centre 
(using the quasi-split case).
\end{itemize}

\begin{lem}\label{lem:2.6}
Suppose that Theorem \ref{thm:2.7} holds for all absolutely pseudo-simple groups with 
trivial centre. Then it holds for all connected linear algebraic groups.
\end{lem}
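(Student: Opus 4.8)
The goal is to reduce Theorem \ref{thm:2.7} for an arbitrary connected linear algebraic $K$-group $\mc G$ to the case of absolutely pseudo-simple groups with trivial centre. The first and most important move is to pass from $\mc G$ to its maximal pseudo-reductive quotient $\mc G' = \mc G / \mc R_{u,K}(\mc G)$. By Lemma \ref{lem:2.7}, pseudo-Levi $K$-subgroups of $\mc G$ correspond bijectively to pseudo-Levi $K$-subgroups of $\mc G'$, and this bijection respects $\mc G(K)$-conjugacy. Applying the same bijection over $\overline K$ — here one must use that $\mc R_{u,\overline K}(\mc G_{\overline K})$ contains $\mc R_{u,K}(\mc G)_{\overline K}$, so that $(\mc G')_{\overline K}$ surjects onto $\mc G_{\overline K} / \mc R_{u,\overline K}(\mc G_{\overline K})$ with unipotent kernel — one sees that $\mc G(\overline K)$-conjugacy of pseudo-Levi subgroups of $\mc G$ matches $\mc G'(\overline K)$-conjugacy of the corresponding pseudo-Levi subgroups of $\mc G'$. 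Hence it suffices to prove the theorem for pseudo-reductive $\mc G$; I will assume $\mc R_{u,K}(\mc G) = 1$ from now on.

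Next I would strip away the centre and pass to a product of $K$-simple factors, following the pattern of Lemma \ref{lem:3}. First, dividing $\mc G$ by a central $K$-subgroup $\mc Z$ changes neither the standard pseudo-Levi subgroups nor the criterion in Lemma \ref{lem:2.2}(b), since the root datum and $W(\mc G,\mc S)$ are unaffected; here one needs the pseudo-reductive analogue that $\mc G / \mc Z$ is again pseudo-reductive and has the same root system and Weyl group relative to the image of $\mc S$, which follows from \cite[Theorem C.2.15]{CGP} together with the behaviour of pseudo-parabolics under central quotients. So we may take $\mc G$ of ``adjoint type'', i.e. with trivial centre. Such a pseudo-reductive group decomposes as a direct product of its $K$-simple factors (the pseudo-reductive analogue of this decomposition is in \cite{CGP}; it follows from the normality and almost-direct-product structure of the minimal nontrivial normal $K$-subgroups), and the theorem is trivially inherited by a product once it holds for each factor. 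Finally, a $K$-simple pseudo-reductive group with trivial centre becomes, over $K_s$, a product $\mc G_1 \times \cdots \times \mc G_d$ of absolutely pseudo-simple $K_s$-groups permuted transitively by $\Gamma_K$; exactly as in the proof of Lemma \ref{lem:3}, choosing $\gamma_i$ with $\gamma_i(\mc G_1) = \mc G_i$ and a compatible Borel-like (minimal pseudo-parabolic) $K_s$-subgroup, one reduces conjugacy of standard pseudo-Levi subgroups $\mc L_I, \mc L_J$ of $\mc G$ to conjugacy of $\mc L_I \cap \mc G_i, \mc L_J \cap \mc G_i$ inside $\mc G_i$ over the field $K_i := K_s^{\Gamma_i}$, and then assembles a $W(\mc G,\mc S)$-element out of the $W(\mc G_i,\mc S_i)$-elements using the analogue of \eqref{eq:1.5}.

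**The main obstacle.** The delicate point is that "absolutely pseudo-simple" is not automatically preserved under the reductions: dividing by the centre of $\mc G_i$ could in principle disturb absolute pseudo-simplicity, and the factor $\mc G_i$ might a priori fail to be absolutely pseudo-simple over $K_i$ even though $\mc G$ is $K$-simple. So I would need the structural facts — from \cite{CGP,CP} — that a $K$-simple pseudo-reductive group is, up to isogeny or up to the standard Weil-restriction construction, obtained from an absolutely pseudo-simple group over a finite (possibly inseparable) extension, and that passing to the adjoint quotient keeps the group absolutely pseudo-simple. These are more subtle than in the reductive case precisely because of the inseparable phenomena (Weil restrictions $R_{K'/K}$, etc.) that have no reductive analogue, and making sure the reduction to "trivial centre" is legitimate is where most of the care goes. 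Once the structural input is in place, the combinatorial assembly of the Weyl-group element is routine, just as in Lemma \ref{lem:3}. The quasi-split case and the absolutely-pseudo-simple case proper are then handled in the subsequent results (from Theorem \ref{thm:2.7} onwards, as announced), using Theorem \ref{thm:A} and the classification of absolutely pseudo-simple groups in \cite{CP}.
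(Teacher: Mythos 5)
Your overall strategy matches the paper's (pass to $\mc G' = \mc G/\mc R_{u,K}(\mc G)$ via Lemma \ref{lem:2.7}, kill the centre, split into $K$-simple pieces, then descend to the absolutely pseudo-simple $K_s$-factors as in Lemma \ref{lem:3}), but there is a genuine gap in the middle step. You claim that a pseudo-reductive group with trivial centre ``decomposes as a direct product of its $K$-simple factors.'' This is false in the pseudo-reductive world: unlike for reductive groups, trivial centre does not force pseudo-semisimplicity. One has $\mc G' = Z_{\mc G'}(\mc T)\,\mc D(\mc G')$ with the Cartan subgroup $Z_{\mc G'}(\mc T)$ commutative but in general not central and not a torus, so after dividing by $Z(\mc G')$ a nontrivial piece of the Cartan subgroup can survive outside the subgroup generated by the pseudo-simple normal subgroups (e.g.\ $R_{K'/K}(PGL_p)$ in characteristic $p$ for $K'/K$ purely inseparable has trivial centre but is not perfect). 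The paper repairs exactly this by first replacing $\mc G'$ with its derived group $\mc D(\mc G')$, which is legitimate because $\mc D(\mc G')$ has the same root system and Weyl group as $\mc G'$ over both $K$ and $K_s$ \cite[Proposition 1.2.6, Theorem C.2.15]{CGP}, and by Lemma \ref{lem:2.2} those data are all that govern conjugacy of standard pseudo-Levi subgroups. Only after that does one divide out the centre and invoke \cite[Proposition 3.1.8]{CGP}; and even then the paper is careful to use only that the root system and Weyl group decompose as products over the pseudo-simple normal subgroups, not that the group itself is literally a direct product.

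Two smaller remarks. First, the difficulty you single out as ``the main obstacle'' is not one: the $\mc G_i$ are by construction the pseudo-simple normal $K_s$-subgroups, and since $(K_i)_s = K_s$ they are automatically absolutely pseudo-simple over $K_i = K_s^{\Gamma_i}$; likewise dividing out their centres preserves pseudo-reductivity by \cite[Proposition 4.1.3]{CP} and perfectness, hence pseudo-simplicity. The Weil-restriction subtleties you anticipate belong to the later case analysis (Proposition \ref{prop:2.8}), not to this reduction. Second, your attention to matching $\mc G(\overline K)$- and $\mc G'(\overline K)$-conjugacy in the first step is a reasonable point of care that the paper passes over quickly.
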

\begin{proof}
By Lemma \ref{lem:2.7} we may just as well consider the pseudo-reductive group $\mc G' =
\mc G / \mc R_{u,K}(\mc G)$.
The derived group $\mc D (\mc G')$ has the same root system and Weyl group as $\mc G'$, both
over $K$ and over $K_s$, by \cite[Proposition 1.2.6 or Theorem C.2.15]{CGP}. In view of 
Lemma \ref{lem:2.2}, we may replace $\mc G'$ by $\mc D (\mc G')$. In particular $\mc G'$ is now 
pseudo-semisimple \cite[Remark 11.2.3]{CGP}.
Since the centre of $\mc G'$ is contained in every pseudo-Levi subgroup, we may 
divide it out. Thus we may assume that $Z(\mc G') = 1$, while retaining pseudo-reductivity 
\cite[Proposition 4.1.3]{CP}. 

Let $\{ \mc G'_j \}_j$ be the finite collection of normal pseudo-simple $K$-subgroups of $\mc G'$,
as in \cite[Propostion 3.1.8]{CGP}. The root system and Weyl group of $\mc G' (K)$ decompose 
as products of these objects for the $\mc G'_j (K)$. Combining that with Lemma \ref{lem:2.2} 
we see that it suffices to prove the theorem for each of the $\mc G'_j$. 

To simplify the notation, we assume from now on that $\mc G$ is a pseudo-simple $K$-group.
Let $\{ \mc G_i \}_i$ be the finite collection of normal pseudo-simple $K_s$-subgroups of $\mc G$.
These subgroups generate $\mc G$ as $K_s$-group \cite[Lemma 3.1.5]{CGP} and $\Gamma_K$ permutes 
them transitively. 
This serves as a slightly weaker analogue of \eqref{eq:1.4}. Next we can argue exactly as in
the proof of Lemma \ref{lem:3}, only replacing some parts by their previously established 
"pseudo"-analogues. As a consequence, it suffices to prove the theorem for the 
absolutely pseudo-simple groups $\mc G_i$ (over the field $K_i = K_s^{\Gamma_i}$).
If necessary, we can still divide out the centre of $\mc G_i$, as observed above for $\mc G'$.
\end{proof}

Following \cite[\S C.2]{CP} we say that a connected linear algebraic group $\mc G$ is 
quasi-split (over $K$) if a minimal pseudo-parabolic $K$-subgroup of $\mc G$ is also minimal 
as pseudo-parabolic $K_s$-subgroup. In view of the classification of conjugacy classes of
pseudo-parabolic $K_s$-subgroups, this condition is equivalent to $\Delta_0 = \emptyset$.

\begin{prop}\label{prop:2.3}
Theorem \ref{thm:2.7} holds when $\mc G$ is quasi-split over $K$.
\end{prop}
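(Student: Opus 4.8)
The plan is to imitate the strategy used for reductive groups in Lemmas \ref{lem:3} and \ref{lem:4}: cut down to an absolutely pseudo-simple group and then recognise the resulting combinatorics as something already settled there. First I would rerun the reduction steps from the proof of Lemma \ref{lem:2.6} and check, exactly as in the final paragraph of Lemma \ref{lem:3}, that each of them preserves the quasi-split property: the passages to $\mc G' = \mc G/\mc R_{u,K}(\mc G)$, to $\mc D(\mc G')$ and to the quotient by the centre (which stays pseudo-reductive by \cite[Proposition 4.1.3]{CP}), the decomposition into normal pseudo-simple $K$-subgroups, and the passage from a $K$-pseudo-simple group to its absolutely pseudo-simple factors $\mc G_i$ over $K_i = K_s^{\Gamma_i}$ all carry the (minimal pseudo-parabolic, pseudo-Levi) data over $K$ to the analogous data over $K_s$. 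So it suffices to treat an absolutely pseudo-simple $\mc G$ with trivial centre that is quasi-split over $K$. Then $\Delta_0 = \emptyset$, we may take $\mc B$ and $\mc T$ to be $\Gamma_K$-stable, the $\mu$-action is the Galois action and factors through $\mathrm{Aut}(\Delta)$, and $W(\mc G,\mc S) = W(\mc G,\mc T)^{\Gamma_K}$ by the pseudo-reductive analogue of \eqref{eq:7} (\cite[Theorem C.2.15]{CGP}). By Lemma \ref{lem:2.2}.b everything then depends only on the irreducible root system $\Phi(\mc G,\mc T)$ and on the homomorphism $\Gamma_K \to \mathrm{Aut}(\Delta)$.

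Next I would reduce the statement to the claim that two $\mc G(\overline K)$-conjugate standard pseudo-Levi subgroups $\mc L_I$, $\mc L_J$ have $I$ and $J$ associate under $W(\mc G,\mc T)$. Granting it, Lemma \ref{lem:4} upgrades the conclusion to $W(\mc G,\mc T)^{\Gamma_K}$-association (its case analysis depends only on the root system and the $\Gamma_K$-action on $\Delta$, and for types with no nontrivial diagram automorphism there is nothing to prove), and Lemma \ref{lem:2.2}.b then delivers $\mc G(K)$-conjugacy. To prove the claim I pass to the maximal reductive quotient $\mc G^{\mathrm{red}}_{\overline K} = \mc G_{\overline K}/\mc R_u(\mc G_{\overline K})$. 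Since $\mc G$ is pseudo-reductive, $\mc L_I = Z_{\mc G}(\lambda_I)$, and the image of $\mc L_{I,\overline K}$ in $\mc G^{\mathrm{red}}_{\overline K}$ is the Levi subgroup $Z_{\mc G^{\mathrm{red}}_{\overline K}}(\bar\lambda_I)$ (compatibility of the formation of $Z_{\mc G}(\lambda)$ with quotients by smooth connected normal unipotent subgroups, see \cite[\S 2.1]{CGP}). Under the natural $W$-equivariant bijection between $\Delta$ and the simple roots of $\mc G^{\mathrm{red}}_{\overline K}$ (an isomorphism of root systems when $\Phi(\mc G,\mc T)$ is reduced, and in the remaining characteristic-$2$ case the standard identification $\Delta(BC_n)\leftrightarrow\Delta(C_n)$, compatible with $W(BC_n)=W(C_n)$) this Levi subgroup still corresponds to $I$. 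A $\mc G(\overline K)$-conjugation between $\mc L_I$ and $\mc L_J$ then descends to one between $Z_{\mc G^{\mathrm{red}}_{\overline K}}(\bar\lambda_I)$ and $Z_{\mc G^{\mathrm{red}}_{\overline K}}(\bar\lambda_J)$, so Lemma \ref{lem:1}.b gives that $I$ and $J$ are $W(\mc G^{\mathrm{red}}_{\overline K}) = W(\mc G,\mc T)$-associate.

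Finally one checks that the pairs (root system, $\Gamma_K$-action on $\Delta$) produced by absolutely pseudo-simple quasi-split $K$-groups are exactly the ones handled in Lemma \ref{lem:4} (the types $A_n^{(2)}$, $D_n^{(2)}$, $D_4^{(3)}$, $E_6^{(2)}$) or else have $\Gamma_K$ acting trivially. By the classification of \cite{CP}, a standard absolutely pseudo-simple $K$-group is built, up to the irrelevant choice of Cartan subgroup, from $\mathrm{R}_{K'/K}(\mc G_0)$ with $K'/K$ purely inseparable and $\mc G_0$ absolutely simple and quasi-split over $K'$, so its root system and Galois action are those of $\mc G_0$; and the basic exotic and remaining exceptional absolutely pseudo-simple groups, living only in characteristics $2$ and $3$, carry root systems of type $B_n$, $C_n$, $F_4$, $G_2$ or $BC_n$, none of which has a nontrivial diagram automorphism, so that $\Gamma_K$ acts trivially on $\Delta$ and the previous paragraph already applies. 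I expect the main obstacle to be exactly this last point: one must work carefully through the classification of \cite{CP} -- the generalized-standard modifications of the Cartan subgroup and the various exceptional constructions included -- to be sure that no absolutely pseudo-simple quasi-split $K$-group produces a combination of root system and Galois action beyond those already disposed of in the proof of Lemma \ref{lem:4}.
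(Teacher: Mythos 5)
Your proposal is essentially correct, and its first half coincides with the paper's: reduce to the pseudo-reductive case, pass to the reductive quotient $\mc G_{\overline K}/\mc R_u(\mc G_{\overline K})$, identify $\Phi(\mc G^{\red},\mc T)$ with the non-multipliable roots of $\Phi(\mc G,\mc T)$ (the $BC_n \leftrightarrow C_n$ point in characteristic $2$), and deduce that $I$ and $J$ are $W(\mc G,\mc T)$-associate. Where you diverge is in upgrading this to $W(\mc G,\mc T)^{\Gamma_K}$-association. You reduce to absolutely pseudo-simple groups and re-run the internals of the reductive argument (the case analysis of Lemma \ref{lem:4}), and you flag as the main remaining obstacle a check, via the classification of \cite{CP}, that the resulting pairs (root system, Galois action on $\Delta$) all occur in Lemma \ref{lem:4}. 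The paper instead introduces an auxiliary quasi-split \emph{reductive} $K$-group $\mc H$ with the same based root datum and the same $\Gamma_K$-action as $\mc G^{\red}$, and applies Theorem \ref{thm:2} to $\mc H$ as a black box: since by Lemma \ref{lem:1}.b conjugacy of standard Levis of $\mc H$ is governed by exactly the same combinatorics, one gets $W(\mc G,\mc T)^{\Gamma_K}$-association with no reduction to simple factors and no appeal to the classification of pseudo-reductive groups. Both routes work, but note that your final ``obstacle'' is not a real one: once the question is reduced to the purely combinatorial statement that $\Gamma_K$-stable, $W$-associate subsets of $\Delta$ are $W^{\Gamma_K}$-associate, \emph{every} pair of an irreducible (possibly non-reduced) root system and a homomorphism $\Gamma_K \to \mr{Aut}(\Delta)$ is covered a priori -- a nontrivial diagram automorphism forces type $A_n$, $D_n$ or $E_6$ (exactly Lemma \ref{lem:4}'s cases), and $BC_n$ admits none -- so no input from \cite{CP} is needed. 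One genuine point that deserves more care than your citation suggests is the identity $W(\mc G,\mc S) = W(\mc G,\mc T)^{\Gamma_K}$ in the quasi-split pseudo-reductive setting: \cite[Theorem C.2.15]{CGP} does not state this directly, and the paper derives it by combining \cite[Lemma 15.3.7]{Spr} (generation of $W(\mc G,\mc S)$ by the reflections $s_\alpha$, $\alpha \in \Phi(\mc G,\mc S)$) with \cite[Proposition 2.4.2]{SiZi} applied to the auxiliary reductive group $\mc H$.
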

\begin{proof}
In view of Lemma \ref{lem:2.7} we may assume that $\mc G$ is pseudo-reductive. 
Consider the reductive $\overline K$-group $\mc G^\red := \mc G / \mc R_u (\mc G)$. 
The image of $\mc T$ in $\mc G^\red$ is a maximal torus of $\mc G^\red$. It is isomorphic to 
$\mc T$ via the projection map, and we may identify it with $\mc T$. Thus $\mc G^\red$ has a 
reduced (integral) root system $\Phi (\mc G^\red,\mc T)$. 
The maximal $K$-torus $\mc T$ of $\mc G$ splits over $K_s$. In the terminology of
\cite[Definition 2.3.1]{CGP}, $\mc G$ is pseudo-split over $K_s$. This is somewhat weaker than
split -- the root system $\Phi (\mc G,\mc T)$ is integral but not necessarily
reduced. (It can only be non-reduced if $K$ has characteristic 2.)

By \cite[Proposition 2.3.10]{CGP} the quotient map $\mc G \to \mc G^\red$ induces a bijection 
between $\Phi (\mc G^\red,\mc T)$ and $\Phi (\mc G,\mc T)$, provided that the latter is reduced.
In general $\Phi (\mc G^\red,\mc T)$ can be identified with the system of non-multipliable roots
in $\Phi (\mc G,\mc T)$. In particular these two root systems have the same Weyl group, and there
is a $W(\mc G,\mc T)$-equivariant bijection
\[
\begin{array}{ccc}
\{ \text{parabolic subsystems of } \Phi (\mc G,\mc T) \} & \longrightarrow & 
\{ \text{parabolic subsystems of } \Phi (\mc G^\red,\mc T) \} \\
R & \mapsto & R \cap \Phi (\mc G^\red,\mc T)  
\end{array}.
\]
This induces a bijection between the sets of simple roots for these root systems, say
$I \longleftrightarrow I^\red$. We note that
\begin{equation}\label{eq:2.3}
I,J \text{ are } W(\mc G,\mc T) \text{-associate} \; \Longleftrightarrow \; 
I^\red ,J^\red \text{ are } W(\mc G^\red,\mc T) \text{-associate}.
\end{equation}
By Lemma \ref{lem:2.2}.a it suffices to prove the lemma for standard pseudo-Levi $K$-subgroups 
$\mc L_I ,\mc L_J$. We assume that $\mc L_I$ and $\mc L_J$ are $\mc G (\overline K)$-conjugate. 
Then the pseudo-parabolic $\overline K$-subgroups 
\[
\mc L^\red_I = \mc L_I \mc R_u (\mc G) / \mc R_u (\mc G) \quad \text{and} \quad
\mc L^\red_J = \mc L_J \mc R_u (\mc G) / \mc R_u (\mc G)
\] 
of $\mc G^\red$ are conjugate. By Lemma \ref{lem:2.2}.b the associated sets of simple roots 
$I^\red$ and $J^\red$ are $W(\mc G^\red,\mc T)$-associate. Then \eqref{eq:2.3} and Lemma 
\ref{lem:2.2}.b entail that $\mc L_I$ and $\mc L_J$ are $\mc G (K_s)$-conjugate.

As $\mc G$ is quasi-split over $K$, the root system $\Phi (\mc G,\mc S)$ can be obtained by
a simple form of Galois descent: it consists of the $\Gamma_K$-orbits in $\Phi (\mc G,\mc T)$.
We know from \cite[Lemma 15.3.7]{Spr} that $W(\mc G,\mc S)$ is generated by the reflections
$s_\alpha$ with $\alpha \in \Phi (\mc G,\mc S)$. Let $\mc H$ be a quasi-split reductive $K$-group 
$\mc H$ with the same root datum as $\mc G^\red$, and the same $\Gamma_K$-action on that. 
By \cite[Proposition 2.4.2]{SiZi} (applied to $\mc H$), the aforementioned reflections generate 
the subgroup $W(\mc G,\mc T)^{\Gamma_K}$ of $W(\mc G,\mc T)$. Thus \eqref{eq:7} holds again.

We already showed that the $\Gamma_K$-stable subsets $I$ and $J$ of $\Delta$ are
$W(\mc G,\mc T)$-associate. By Lemma \ref{lem:1}.b the corresponding Levi $K$-subgroups
$\mc L_I^{\mc H} ,\mc L_J^{\mc H}$ of $\mc H$ are $\mc H (\overline K)$-conjugate. Then
Theorem \ref{thm:2} says that $\mc L_I^{\mc H}$ and $\mc L_J^{\mc H}$ are also 
$\mc H (K)$-conjugate. Again using Lemma \ref{lem:1}.b, we deduce that $I$ and $J$ are associate 
under $W(\mc G,\mc T)^{\Gamma_K} = W(\mc G,\mc S)$. 
Finally Lemma \ref{lem:2.2}.b tells us that $\mc L_I$ and $\mc L_J$ are $\mc G (K)$-conjugate.
\end{proof}

To go beyond quasi-split linear algebraic groups, we would like to use arguments like Lemmas 
\ref{lem:6} and \ref{lem:7}. However, the usual notion of an inner form (for reductive
groups) is not flexible enough for pseudo-reductive groups \cite[\S C]{CP}. Better results are
obtained by allowing inner twists involving a $K$-group of automorphisms called
$(\mr{Aut}^{sm}_{\mc D (\mc G) / K} )^\circ$ in \cite[\S C.2]{CP}. This leads to the notion of
pseudo-inner forms of pseudo-reductive groups. Every pseudo-reductive $K$-group admits a
quasi-split inner form, apart from some exceptions that can only occur if char$(K) = 2$ and
$[K : K^2] > 4$ \cite[Theorem C.2.10]{CP}.

\begin{lem}\label{lem:2.4}
Let $\mc G$ be a pseudo-reductive $K_s$-group and let $\lambda : GL_1 \to \mc G$ be a 
$K_s$-rational cocharacter. Suppose that $\phi \in (\mr{Aut}^{sm}_{\mc D (\mc G) / K_s} )^\circ (K_s)$
stabilizes the $K_s$-subgroups $\mc P_\lambda$ and $\mc L_\lambda$. 
Then $\phi$ stabilizes every $K_s$-subgroup of $\mc G$ that contains $\mc L_\lambda$.
\end{lem}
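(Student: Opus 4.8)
The plan is to run the pseudo-reductive analogue of the mechanism behind Lemma~\ref{lem:6}: show that stabilizing the pair $(\mc P_\lambda,\mc L_\lambda)$ pins $\phi$ down so tightly that, up to a conjugation by $\mc L_\lambda(K_s)$ — which is harmless here, since $\mc L_\lambda$ lies inside every subgroup in sight and enters both hypotheses — it cannot move any $K_s$-subgroup lying over $\mc L_\lambda$.

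First I would enlarge the list of subgroups fixed by $\phi$. As $\mc G$ is pseudo-reductive, $\mc L_\lambda=Z_{\mc G}(\lambda)$ and $\mc P_\lambda=\mc P_{\mc G}(\lambda)$. Since $\phi$ stabilizes $\mc L_\lambda$, the cocharacter $\phi\circ\lambda$ again has centralizer $\mc L_\lambda$; since $\phi$ also stabilizes $\mc P_{\mc G}(\lambda)$, the cocharacters $\phi\circ\lambda$ and $\lambda$ determine the same sign pattern on $\Phi(\mc G,\mc L_\lambda)$, and hence $\mc P_{\mc G}((\phi\circ\lambda)^{-1})=\mc P_{\mc G}(\lambda^{-1})$; that is, $\phi$ stabilizes $\mc P_{\lambda^{-1}}$, and therefore the $K_s$-split unipotent radicals $\mc U^{\pm}:=\mc U_{\mc G}(\lambda^{\pm1})$, which are characteristic in $\mc P_{\lambda^{\pm1}}$ (here I use the structure of $\mc P_{\mc G}(\lambda)$ and the open cell from \cite[\S 2.1]{CGP}). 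Next, for any $K_s$-subgroup $\mc H\supseteq\mc L_\lambda$ the cocharacter $\lambda$ factors through $\mc L_\lambda\subseteq\mc H$, so forming $\mc P_{\mc H}(\lambda)$, $\mc U_{\mc H}(\lambda^{\pm1})$ and $\mc Z_{\mc H}(\lambda)$ inside $\mc H$ gives $\mc Z_{\mc H}(\lambda)=\mc L_\lambda$, $\mc U_{\mc H}(\lambda^{\pm1})=\mc H\cap\mc U^{\pm}$, and a dense open cell $\mc U_{\mc H}(\lambda^{-1})\times\mc L_\lambda\times\mc U_{\mc H}(\lambda)$ in the identity component of $\mc H$. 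Thus $\mc H$ is generated by $\mc L_\lambda$ together with the two $\mc L_\lambda$-stable subgroups $\mc H^{\pm}:=\mc H\cap\mc U^{\pm}$, and it suffices to show $\phi(\mc H^{\pm})=\mc H^{\pm}$ for every $\mc H\supseteq\mc L_\lambda$.

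To that end I would, by composing $\phi$ with suitable conjugations $\mr{Ad}(\ell)$, $\ell\in\mc L_\lambda(K_s)$ — a move that affects neither the hypotheses nor whether $\phi$ stabilizes a given $\mc H\supseteq\mc L_\lambda$ — arrange that $\phi$ fixes a maximal torus $\mc T\subseteq\mc L_\lambda$ through which $\lambda$ factors and that $\phi$ preserves a chosen base of $\Phi(\mc L_\lambda,\mc T)$ (possible because all maximal tori of $\mc L_\lambda$ are $\mc L_\lambda(K_s)$-conjugate and $W(\mc L_\lambda,\mc T)$ is realized in $N_{\mc L_\lambda(K_s)}(\mc T)$, by \cite[\S C.2]{CGP}). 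After this normalization $\phi$ permutes the root groups $\mc U_{(a)}$ of $(\mc G,\mc T)$, preserving the partition of $\Phi(\mc G,\mc T)$ by the sign of $\langle\,\cdot\,,\lambda\rangle$, and acts on each $\mc U_{(a)}$ compatibly with the action of the Cartan $\mc C=Z_{\mc G}(\mc T)$. The crux — and, I expect, the only genuinely delicate point — is confined to characteristic $2$: a root group of a pseudo-reductive group can be a higher-dimensional vector group on which $\mc C$ acts only through scalars (the root character factoring through a Frobenius), so it carries a continuum of $\mc C$-stable subgroups that an \emph{a priori} arbitrary $\phi$ might permute. To close the argument one must use the classification of absolutely pseudo-simple groups and of their automorphism schemes in \cite{CP} to show that, after the above normalization, the residual freedom in $\phi\in(\mr{Aut}^{sm}_{\mc D(\mc G)/K_s})^\circ(K_s)$ not already realized by $\mc L_\lambda(K_s)$-conjugation still acts on each such root group through that same scalar structure, hence preserves every $\mc C$-stable subgroup, in particular $\mc H^{+}$ and $\mc H^{-}$. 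Granting this, $\phi$ stabilizes $\mc H=\langle\mc L_\lambda,\mc H^{+},\mc H^{-}\rangle$, which is the assertion.
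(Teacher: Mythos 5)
Your proposal correctly identifies where the difficulty lies, but it does not resolve it: the entire argument hinges on the final claim that, after normalization, the residual part of $\phi$ acts on each root group ``through the same scalar structure'' as the Cartan and hence preserves every $\mc C$-stable subgroup. You explicitly defer this to ``the classification of absolutely pseudo-simple groups and of their automorphism schemes in \cite{CP}'' and then proceed with ``Granting this\dots''. That is precisely the point where, in bad characteristic, a root group can be a higher-dimensional vector group carrying a continuum of $\mc C$-stable subgroups, and an a priori element of $(\mr{Aut}^{sm}_{\mc D (\mc G)/K_s})^\circ (K_s)$ could conceivably permute them. Without a concrete argument here the proof is incomplete; everything before it (stabilization of $\mc P_{\lambda^{-1}}$ and of $\mc U_{\mc G}(\lambda^{\pm 1})$, the open-cell generation of $\mc H$ by $\mc L_\lambda$ and $\mc H \cap \mc U_{\mc G}(\lambda^{\pm 1})$, the normalization by $\mr{Ad}(\ell)$ with $\ell \in \mc L_\lambda (K_s)$) is sound but only reduces the problem to the unproven crux. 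A further, smaller issue is that your open-cell decomposition requires $\mc H$ to be smooth (and only controls its identity component), whereas the lemma is stated for arbitrary $K_s$-subgroups containing $\mc L_\lambda$.

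The paper closes the gap by a different and more efficient route that avoids any analysis of root groups. One first divides out $Z(\mc G)$ (harmless, as the centre lies in $\mc L_\lambda$) and checks $Z(\mc D (\mc G)) = 1$. Then, by \cite[Propositions 6.1.4, 6.1.7 and 6.2.4]{CP}, the group $(\mr{Aut}^{sm}_{\mc D (\mc G)/K_s})^\circ (K_s)$ is the image of $\mc D (\mc G)(K_s) \rtimes Z_{\mc D (\mc G), \mc C}(K_s)$, and --- this is the key structural input you are missing --- with trivial centre each factor $Z_{\mc G_\alpha, \mc C_\alpha}$ of $Z_{\mc D (\mc G), \mc C}$ is a subtorus of $\mc T \cap \mc D (\mc G)$ acting by conjugation (\cite[Lemma 6.1.3]{CP}, \cite[Proposition 9.8.15]{CGP}). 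Hence $\phi = \mr{Ad}(g)$ for some $g \in \mc D (\mc G)(K_s)$ outright. Since $\mc P_\lambda$ is its own normalizer and no nontrivial element of $\mc U_{\mc G}(\lambda)$ normalizes $\mc L_\lambda$, the hypotheses force $g \in \mc L_\lambda (K_s)$, and the conclusion is immediate for every subgroup containing $\mc L_\lambda$. If you want to salvage your approach, the statement you would need to prove about the action on root groups is essentially equivalent to this innerness assertion, so you may as well prove that directly.
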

\begin{proof}
Since the centre of $\mc G$ is contained in $\mc L_\lambda$, we may divide it out. Thus we may
assume that $Z(\mc G) = 1$, while retaining pseudo-reductivity \cite[Proposition 4.1.3]{CP}. 
The derived group $\mc D (\mc G)$ is pseudo-semisimple \cite[Remark 11.2.3]{CGP} and 
$\mc G = Z_{\mc G}(\mc T) \mc D (G)$ \cite[Proposition 1.2.6]{CGP}. By \cite[Lemma 1.2.5.ii]{CGP}
the centre of $\mc D (\mc G)$ centralizes $\mc T$. Since $Z_{\mc G}(\mc T)$ is commutative
\cite[Proposition 1.2.4]{CGP}, $Z(\mc D (\mc G))$ commutes with it. Hence
\[
Z(\mc D (\mc G)) = Z( \mc G ) \cap \mc D (\mc G) = 1.
\]
We recall from \cite[\S 4.1.2]{CP} that $\mr{Aut}_{\mc G,Z_{\mc G}(\mc T)}$ is the $K_s$-group 
of automorphisms of $\mc G$ which restrict to the identity on the Cartan $K_s$-subgroup 
$Z_{\mc G}(\mc T)$. The maximal smooth closed $K_s$-subgroup $Z_{\mc G,Z_{\mc G}(\mc T)}$ of
$\mr{Aut}_{\mc G,Z_{\mc G}(\mc T)}$ is connected \cite[Proposition 6.1.4]{CP}. The same holds
with $\mc D (\mc G)$ and $\mc C := Z_{\mc D (\mc G)}(\mc T)$ instead of $\mc G$ and 
$Z_{\mc G}(\mc T)$. In fact, by \cite[Proposition 6.1.7]{CP} there is a natural isomorphism
\begin{equation}\label{eq:2.6}
Z_{\mc G,Z_{\mc G}(\mc T)} \to Z_{\mc D (\mc G),\mc C} .
\end{equation}
Embedding $\mc C$ diagonally in $\mc D(\mc G) \rtimes Z_{\mc D (\mc G),\mc C}$, one forms 
the $K_s$-group $(\mc D (\mc G) \rtimes Z_{\mc D(\mc G),\mc C}) / \mc C$. It naturally acts 
on $\mc G$, the part $\mc D (\mc G)$ by conjugation and $Z_{\mc D (\mc G),\mc C}$ via 
\eqref{eq:2.6}. According to \cite[Proposition 6.2.4]{CP}, which we may apply because 
$\mc D (\mc G)$ is pseudo-semisimple, there is an isomorphism of $K_s$-groups
\[
(\mc D (\mc G) \rtimes Z_{\mc D (\mc G),\mc C}) / \mc C \to 
(\mr{Aut}^{sm}_{\mc D (\mc G) / K_s} )^\circ ,
\]
which preserves the actions on $\mc G$. Furthermore \cite[Proposition 6.2.4]{CP} also says
that the homomorphism
\begin{equation}\label{eq:2.4}
\mc D (\mc G) (K_s) \rtimes Z_{\mc D (\mc G),\mc C}(K_s) \to  
(\mr{Aut}^{sm}_{\mc D (\mc G) / K_s} )^\circ (K_s)
\end{equation}
is surjective. By \cite[Proposition 6.1.4]{CP} there is a decomposition
\[
Z_{\mc D (\mc G),\mc C} \cong \prod\nolimits_{\alpha \in \Delta} Z_{\mc G_\alpha,\mc C_\alpha}
\qquad \text{as } K_s\text{-groups}.
\]
Taking into account that $Z(\mc D (\mc G)) = 1$, \cite[Lemma 6.1.3]{CP} and 
\cite[Proposition 9.8.15]{CGP} show that each of the $K_s$-groups $Z_{\mc G_\alpha,\mc C_\alpha}$ 
is a subtorus of $\mc T \cap \mc D (\mc G)$ which acts on $\mc G$ by conjugation. 
Combining that with \eqref{eq:2.4}, we deduce that $\phi$ can be realized as Ad$(g)$ for some 
$g \in \mc D (\mc G) (K_s)$.

As $\mc P_\lambda$ is its own normalizer \cite[Proposition 3.5.7]{CGP}, we must have $g \in
\mc P_\lambda (K_s)$. A nontrivial element $u$ of $\mc U_{\mc G}(\lambda)(K_s)$ cannot normalize 
$\mc L_\lambda$, because 
\[
\lambda (a) u \lambda^{-1}(a) u^{-1} \in \mc U_{\mc G}(\lambda)(K_s) \setminus \{1\}
\]
for generic (i.e. not a root of unity) $a \in K^\times$.  In view of \eqref{eq:2.1}, this 
implies that the normalizer of $\mc L_\lambda$ in $\mc P_\lambda$ is $\mc L_\lambda$ itself.
Thus the assumptions of the lemma even entail $g \in \mc L_\lambda (K_s)$. Now it is clear 
that $\phi = \mr{Ad}(g)$ stabilizes every $K_s$-subgroup of $\mc G$ that contains $\mc L_\lambda$.
\end{proof}

Suppose that $\mc G^*$ is a quasi-split pseudo-reductive group and that $\psi : \mc G \to \mc G^*$
is a pseudo-inner twist. (This forces $\mc G$ to be pseudo-reductive as well.) The setup 
leading to Lemma \ref{lem:6} remains valid if we replace all objects by their pseudo-versions.

\begin{lem}\label{lem:2.5}
Let $\mc H$ be a $K_s$-subgroup of $\mc G$ containing $\mc L_{\Delta_0}$. Then $\mc H$ is defined
over $K$ if and only if $\psi (\mc H)$ is defined over $K$.
\end{lem}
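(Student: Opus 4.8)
The plan is to run the proof of Lemma~\ref{lem:6}(c) again, with Lemma~\ref{lem:2.4} doing the work that was done there by the fact that a parabolic subgroup is its own normalizer. Write $u \colon \Gamma_K \to (\mr{Aut}^{sm}_{\mc D(\mc G^*)/K_s})^\circ(K_s)$ for the cocycle attached to the pseudo-inner twist $\psi$, so that the pseudo-version of \eqref{eq:1.7} reads $\psi \circ \gamma \circ \psi^{-1} = u(\gamma) \circ \gamma^*$ for all $\gamma \in \Gamma_K$, and fix a $K_s$-rational cocharacter $\lambda$ of $\mc G^*$ with $\mc P_{\Delta_0}^* = \mc P_\lambda$ and $\mc L_{\Delta_0}^* = \mc L_\lambda$ (concretely $\lambda = \psi\circ\lambda_0$ for a cocharacter $\lambda_0\colon GL_1\to\mc S$ defining the standard pair $(\mc P_{\Delta_0},\mc L_{\Delta_0})$).

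First I would establish the pseudo-analogues of Lemma~\ref{lem:6}(a) and~(b): the pair $(\mc P_{\Delta_0}^*, \mc L_{\Delta_0}^*)$ is defined over $K$, and $u(\gamma)$ stabilizes both $\mc P_{\Delta_0}^*$ and $\mc L_{\Delta_0}^*$ for every $\gamma \in \Gamma_K$. Since $\mc P_{\Delta_0}$ and $\mc L_{\Delta_0}$ are $\Gamma_K$-stable, transporting through $\psi$ shows that $u(\gamma)\circ\gamma^*$ stabilizes $(\mc P_{\Delta_0}^*,\mc L_{\Delta_0}^*)$, and as $\mc B^*,\mc T^*$ are defined over $K$ this pair as well as its image under $\gamma^*$ are standard. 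The crux is to deduce that $\gamma^*$ by itself stabilizes the pair. Here one uses that $u(\gamma)$ lies in the \emph{identity component} of the automorphism group, so it cannot move a standard pseudo-parabolic $K_s$-subgroup of $\mc G^*$ off its $\mc G^*(K_s)$-conjugacy class: after the (harmless) passage to $\mc G^*/Z(\mc G^*)$, in which $\mc L_{\Delta_0}^*$ still contains the centre, one may realize $u(\gamma)$ as $\mr{Ad}(g_\gamma)$ with $g_\gamma \in \mc D(\mc G^*)(K_s)$ by the mechanism in the proof of Lemma~\ref{lem:2.4} (built on \cite[Proposition 6.2.4]{CP}); then $g_\gamma$ conjugates the standard pseudo-parabolic $\gamma^*(\mc P_{\Delta_0}^*)$ to the standard one $\mc P_{\Delta_0}^*$, so they coincide by the conjugacy classification of pseudo-parabolic $K_s$-subgroups \cite[\S 3.5]{CGP}, and matching indices then gives $\gamma^*(\mc L_{\Delta_0}^*) = \mc L_{\Delta_0}^*$ as well. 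Consequently $\gamma^*$, hence also $u(\gamma) = (u(\gamma)\circ\gamma^*)\circ(\gamma^*)^{-1}$, stabilizes the pair $(\mc P_{\Delta_0}^*, \mc L_{\Delta_0}^*)$.

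With that in place, Lemma~\ref{lem:2.4} applied to $\mc G^*$, to $\lambda$, and to $\phi = u(\gamma)$, gives that $u(\gamma)$ stabilizes every $K_s$-subgroup of $\mc G^*$ containing $\mc L_{\Delta_0}^*$; in particular it stabilizes $\psi(\mc H)$, since $\psi(\mc H) \supset \psi(\mc L_{\Delta_0}) = \mc L_{\Delta_0}^*$. The rest is exactly Lemma~\ref{lem:6}(c): a $K_s$-subgroup is defined over $K$ precisely when it is $\Gamma_K$-stable, and from $\psi\circ\gamma\circ\psi^{-1} = u(\gamma)\circ\gamma^*$ together with $u(\gamma)(\psi(\mc H)) = \psi(\mc H)$ one reads off that $\gamma(\mc H) = \mc H$ if and only if $\gamma^*(\psi(\mc H)) = \psi(\mc H)$, for each $\gamma \in \Gamma_K$. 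Hence $\mc H$ is $\Gamma_K$-stable if and only if $\psi(\mc H)$ is, that is, $\mc H$ is defined over $K$ if and only if $\psi(\mc H)$ is.

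The step I expect to be the real obstacle is the one in the middle paragraph: showing that the twisting automorphisms $u(\gamma)$ genuinely fix the standard pair $(\mc P_{\Delta_0}^*, \mc L_{\Delta_0}^*)$. For reductive groups this was immediate from ``conjugate standard parabolic pairs are equal'', but here $u(\gamma)$ is an abstract automorphism rather than a conjugation, so one must invoke the structure of $(\mr{Aut}^{sm}_{\mc D(\mc G^*)/K_s})^\circ$ to exclude a permutation of the simple factors of the pseudo-root datum; once that is settled, everything else is routine transport of structure through $\psi$.
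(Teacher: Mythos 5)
Your proof follows the same route as the paper: establish that $(\mc P^*_{\Delta_0},\mc L^*_{\Delta_0})$ is defined over $K$ and stabilized by the twisting automorphisms $u(\gamma)$, then invoke Lemma~\ref{lem:2.4} to propagate that stability to $\psi(\mc H)$, and conclude via \eqref{eq:1.7}. The paper compresses the first step into ``exactly as in the proof of Lemma~\ref{lem:6}''; your middle paragraph correctly identifies and resolves the one point where that is not literal (namely that $u(\gamma)$ is no longer an inner automorphism by an element of $\mc G^*(K_s)$, so one must realize it as $\mr{Ad}(g_\gamma)$ with $g_\gamma\in\mc D(\mc G^*)(K_s)$ via the structure of $(\mr{Aut}^{sm}_{\mc D(\mc G^*)/K_s})^\circ$), which is a legitimate and welcome elaboration rather than a deviation.
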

\begin{proof}
Exactly as in the proof of Lemma \ref{lem:6} one shows that $(\mc P^*_{\Delta_0},\mc L^*_{\Delta_0})$
is defined over $K$ and stable under Ad$(u(\gamma))$ for all $\gamma \in \Gamma_K$. Next
Lemma \ref{lem:2.4} says that Ad$(u(\gamma)) \in (\mr{Aut}^{sm}_{\mc D (\mc G) / K} )^\circ (K_s)$
stabilizes $\psi (\mc H)$. Then \eqref{eq:1.7} shows that $\psi (\mc H)$ is $\Gamma_K$-stable
if and only if $\mc H$ is $\Gamma_K$-stable.
\end{proof}

Now we can finish the proof of our main result.

\begin{prop}\label{prop:2.8}
Theorem \ref{thm:2.7} holds for absolutely pseudo-simple $K$-groups with trivial centre. 
\end{prop}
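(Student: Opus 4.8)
The plan is to run the argument of Lemma \ref{lem:7} in the pseudo-reductive setting, with Proposition \ref{prop:2.3} playing the role of ``Theorem \ref{thm:2} for quasi-split groups'' and Lemma \ref{lem:2.5} playing the role of Lemma \ref{lem:6}.c. By Lemma \ref{lem:2.2}.a it suffices to consider two standard pseudo-Levi $K$-subgroups $\mc L_I,\mc L_J$ (with $\Delta_0\subseteq I,J\subseteq\Delta$ both $\mu_{\mc B}(\Gamma_K)$-stable) that are $\mc G(\overline K)$-conjugate, and to produce an element of $\mc G(K)$ conjugating $\mc L_I$ onto $\mc L_J$. First I would choose, via \cite[Theorem C.2.10]{CP}, a quasi-split pseudo-reductive $K$-group $\mc G^*$ and a pseudo-inner twist $\psi\colon\mc G\to\mc G^*$, normalized as in the setup preceding Lemma \ref{lem:2.5} so that $\mc P^*_{\Delta_0}\supseteq\mc B^*$ and $\mc L^*_{\Delta_0}\supseteq\mc T^*$. (The handful of exceptional cases in characteristic $2$ with $[K:K^2]>4$ where no quasi-split pseudo-inner form exists would need a separate treatment, presumably via the explicit classification in \cite{CP}; this is the one genuine loose end of the plan.) Setting $\mc L_I^*:=\psi(\mc L_I)$, $\mc P_I^*:=\psi(\mc P_I)$ and likewise for $J$, these are pseudo-Levi, resp.\ pseudo-parabolic, $K_s$-subgroups of $\mc G^*$ that contain $\psi(\mc L_{\Delta_0})=\mc L^*_{\Delta_0}$; since $\mc L_I,\mc P_I,\mc L_J,\mc P_J$ are standard and hence $\Gamma_K$-stable, Lemma \ref{lem:2.5} shows they are all defined over $K$.

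Next, since $\psi$ is an isomorphism of $K_s$-groups it stays an isomorphism after base change to $\overline K$, so $\mc L_I^*$ and $\mc L_J^*$ are $\mc G^*(\overline K)$-conjugate pseudo-Levi $K$-subgroups of the quasi-split group $\mc G^*$; Proposition \ref{prop:2.3} then provides $h^*\in\mc G^*(K)$ with $h^*\mc L_I^*(h^*)^{-1}=\mc L_J^*$. I would set $\mc P^*:=h^*\mc P_I^*(h^*)^{-1}$ (a pseudo-parabolic $K$-subgroup of $\mc G^*$, containing $\mc L_J^*\supseteq\mc L^*_{\Delta_0}$), $h:=\psi^{-1}(h^*)\in\mc G(K_s)$ and $\mc P:=\psi^{-1}(\mc P^*)=h\mc P_I h^{-1}$, noting also $\mc L_J=\psi^{-1}(\mc L_J^*)=h\mc L_I h^{-1}$. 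Then $\mc P\supseteq\mc L_J\supseteq\mc L_{\Delta_0}$, and since $\mc P^*$ is defined over $K$, Lemma \ref{lem:2.5} gives that $\mc P$ is defined over $K$. Thus $\mc P$ and $\mc P_I$ are $\mc G(K_s)$-conjugate pseudo-parabolic $K$-subgroups of $\mc G$, and by the equivalence of $\mc G(K_s)$- and $\mc G(K)$-conjugacy for pseudo-parabolic $K$-subgroups (recorded in the discussion preceding Lemma \ref{lem:2.2}) there is $g\in\mc G(K)$ with $g\mc P g^{-1}=\mc P_I$. In particular $g\mc L_J g^{-1}=g(h\mc L_I h^{-1})g^{-1}\subseteq g\mc P g^{-1}=\mc P_I$ is a pseudo-Levi $K$-subgroup of $\mc G$ contained in $\mc P_I$.

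The remaining and, I expect, the main step is a pseudo-reductive analogue of the appeal to \cite[Proposition 16.1.1]{Spr} at the end of Lemma \ref{lem:7}: one must show that $g\mc L_J g^{-1}$ is $\mc P_I(K)$-conjugate to $\mc L_I$. Here I would use that, $\mc G$ being pseudo-reductive, $\mc P_I=\mc L_I\ltimes\mc U_I$ with $\mc U_I:=\mc U_{\mc G}(\lambda_I)$ by \eqref{eq:2.1}, all of these defined over $K$. From $g\mc P g^{-1}=\mc P_I$ and $\mc P=h\mc P_I h^{-1}$ one gets $gh\in N_{\mc G}(\mc P_I)(K_s)=\mc P_I(K_s)$ by \cite[Proposition 3.5.7]{CGP}; writing $gh=\ell u$ with $\ell\in\mc L_I(K_s)$ and $u\in\mc U_I(K_s)$, and using that $\ell$ normalizes both $\mc L_I$ and $\mc U_I$, one obtains
\[
g\mc L_J g^{-1}=gh\,\mc L_I\,h^{-1}g^{-1}=\ell u\,\mc L_I\,u^{-1}\ell^{-1}=u'\,\mc L_I\,(u')^{-1},\qquad u':=\ell u\ell^{-1}\in\mc U_I(K_s).
\]
As in the proof of Lemma \ref{lem:2.4}, no nontrivial element of $\mc U_{\mc G}(\lambda_I)$ can normalize $\mc L_I=\mc Z_{\mc G}(\lambda_I)$, so $u'$ is the \emph{unique} $v\in\mc U_I(\overline K)$ with $v\,\mc L_I\,v^{-1}=g\mc L_J g^{-1}$. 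Since $\mc U_I$, $\mc L_I$ and $g\mc L_J g^{-1}$ are all defined over $K$, the set of such $v$ is $\Gamma_K$-stable, hence its unique point $u'$ is $\Gamma_K$-fixed, i.e.\ $u'\in\mc U_I(K)\subseteq\mc P_I(K)$. Therefore $(u'^{-1}g)\,\mc L_J\,(u'^{-1}g)^{-1}=\mc L_I$ with $u'^{-1}g\in\mc G(K)$, which is what was wanted. The main obstacle, apart from the exceptional-characteristic caveat mentioned above, is precisely isolating this last step: there is no off-the-shelf ``conjugacy of Levi factors of a pseudo-parabolic $K$-subgroup over $K$'' statement, and the uniqueness-plus-descent argument built on $N_{\mc U_I}(\mc L_I)=1$ is what makes it go through.
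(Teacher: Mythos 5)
Your plan for the case where a quasi-split pseudo-inner form exists matches the paper's: reduce to standard $\mc L_I,\mc L_J$ via Lemma \ref{lem:2.2}.a, transfer to the quasi-split form with Lemma \ref{lem:2.5}, apply Proposition \ref{prop:2.3} there, and pull back to get $\mc P = h\mc P_I h^{-1}$ defined over $K$ and $\mc G(K)$-conjugate to $\mc P_I$. Your closing step is even a little more self-contained than the paper's: where the paper refers back to the proof of Lemma \ref{lem:2.2}.a for the $\mc P_I(K)$-conjugacy of pseudo-Levi factors, you decompose $gh = \ell u$ in $\mc P_I(K_s) = \mc L_I(K_s)\ltimes \mc U_I(K_s)$ and use $N_{\mc U_I}(\mc L_I)=1$ plus Galois descent to land the conjugating element in $\mc U_I(K)$. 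That argument is correct and is a legitimate substitute for the appeal to \cite[Proposition 16.1.1]{Spr}.

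However, the ``one genuine loose end'' you flag is not a handful of stray cases that can be deferred: it is a substantial chunk of the theorem, and the paper spends roughly half of the proof of Proposition \ref{prop:2.8} on it. Concretely, the paper invokes \cite[Theorem 9.2.1]{CP} to see that $\mc G$ is generalized standard, and isolates the situation where no quasi-split pseudo-inner form is available as: $\mr{char}(K)=2$, $[K:K^2]>4$, $\mc G$ neither standard nor exotic, and root system over $K_s$ of type $B_n$, $C_n$ or $BC_n$. (In the standard and exotic cases it still gets a quasi-split form, for exotic groups via \cite[Propositions C.1.3 and 6.3.4]{CP}, using that $B_n$, $C_n$, $F_4$ have no diagram automorphisms.) For the residual $B_n/C_n/BC_n$ case the twisting strategy is abandoned entirely: since the diagram has no automorphisms, the problem becomes showing that two $W(B_n)$-associate subsets $I,J\subset\Delta$ are already associate under $W(\mc G,\mc S)$; the paper pins down the short-root tail common to $I$ and $J$, reduces to a type-$A$ subdiagram $\Delta'$, and compares with standard Levi subgroups of an auxiliary inner form $GL_{(n-m)/d}(D)$ of $GL_{n-m}$ whose index matches $(\Delta',\Delta'\cap\Delta_0)$, so that Theorem \ref{thm:2} for that group yields the required element of $W(\mc L_{\Delta'},\mc S)$. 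Nothing in your proposal supplies this combinatorial/descent argument, so as written the proof is incomplete precisely in the characteristic-$2$ cases that make the pseudo-reductive setting genuinely harder than the reductive one.
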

\begin{proof}
By Lemma \ref{lem:2.2}.a it suffices to consider two standard pseudo-Levi subgroups
$\mc L_I, \mc L_J$ which are $\mc G (\overline K)$-conjugate. As $\mc G$ becomes pseudo-split
over $K_s$, Proposition \ref{prop:2.3} tells us that there exists a $w \in \mc G (K_s)$ with
$w \mc L_I w^{-1} = \mc L_J$. 

By \cite[Proposition 4.1.3 and Theorem 9.2.1]{CP} $\mc G$ is generalized standard, in the
sense of \cite[Definition 9.1.7]{CP}. With \cite[Definition 9.1.5]{CP} we see that (at least) 
one of the following conditions holds:
\begin{enumerate}[(i)]
\item The characteristic of $K$ is not 2, or char$(K) = 2$ and $[K : K^2] \leq 4$.
\item The group $\mc G$ is standard \cite[Definition 2.1.3]{CP} or exotic 
\cite[Definitions 2.2.2 and 2.2.3]{CP}.
\item The root system of $\mc G$ over $K_s$ has type $B_n, C_n$ or $BC_n$ with $n \geq 1$.
\end{enumerate}
\textbf{(i) and (ii).} In the cases (i) and (ii) with $\mc G$ standard,
\cite[Theorem C.2.10]{CP} tells us that $\mc G$ has a quasi-split pseudo-inner form. If we are
in case (ii) with $\mc G$ non-standard and char$(K) = 2$, then $\mc G$ is an exotic 
pseudo-reductive group with 
root system (over $K_s$) of type $B_n ,C_n$ or $F_4$. By \cite[Proposition C.1.3]{CP} it has
a pseudo-split $K_s / K$-form. Since the Dynkin diagram of $\mc G$ admits no nontrivial
automorphisms, the group $\mr{Aut}^{sm}_{\mc G / K}$ is connected and every $K_s/K$-form of
$\mc G$ is pseudo-inner \cite[Proposition 6.3.4]{CP}. Thus, in the cases (i) and (ii) $\mc G$ 
has a quasi-split pseudo-inner form.

Now we argue as in the proof of Lemma \ref{lem:7}, using Lemma \ref{lem:2.5} instead of 
Lemma \ref{lem:6}.c. The hypothesis in Lemma \ref{lem:7} is fulfilled 
for quasi-split pseudo-reductive groups, by Proposition \ref{prop:2.3}. This shows that
$\mc L_J$ is $\mc G (K)$-conjugate to a pseudo-Levi factor of $\mc P_I$. In the proof of
Lemma \ref{lem:2.2}.a we checked that all such pseudo-Levi factors are $\mc P_I (K)$-conjugate,
so $\mc L_J$ is $\mc G (K)$-conjugate to $\mc L_I$.

\textbf{(iii).} The three types can be dealt with in the same way, so we only consider 
root systems $\Phi (\mc G,\mc T)$ of type $B_n$. Since this Dynkin diagram does not
admit any nontrivial automorphisms, the action $\mu_{\mc B}$ of $\Gamma_K$ is trivial.

Suppose first that $n \leq 2$. Then any two diferent subsets of $\Delta$ are not 
$W(\mc G,\mc T)$-associate, as is easily checked. Hence $I = J$ and $\mc L_I = \mc L_J$ 
in this case.

From now on we suppose that $\Phi (\mc G,\mc T)$ has type $B_n$ with $n > 2$.
We realize the root system of type $B_n$ in the standard 
way in $\Z^n$. Let $\alpha_1, \ldots, \alpha_{n-1}, \alpha_n$ be the vertices of $\Delta$, 
where $\alpha_i = e_i - e_{i+1}$ for $i < n$ and $\alpha_n = e_n$ is the short simple root.

By Lemma \ref{lem:2.2}.b there exists a $w \in W(\mc G,\mc T) = W(B_n)$ with $w I = J$.
When $I$ or $J$ equals $\Delta$, we immediately obtain $I = J$. Hence we may assume that
$I \subsetneq \Delta \supsetneq J$.
Let $m \in \Z_{\geq 0}$ be the smallest number such that $\alpha_{n-m} \notin I$. For $j < m$,
$\alpha_{n-j} \in I$ is the unique root in $\Delta$ which is connected to $\alpha_n$ by
a string of length $j$. As $\alpha_n$ is the unique short simple root and $w I \subset \Delta$,
it follows that $w (\alpha_{n-j}) = \alpha_{n-j}$ for all $j < m$. The same considerations
apply to $J$ and $w I = J$, so $\{ \alpha_{n+1-m},\ldots,\alpha_n \} \subset I \cap J$ is 
fixed pointwise by $w$ and $\alpha_{n-m} \neq I \cup J$. As 
\[
\mr{span}_\Z \{ \alpha_{n+1-m}, \ldots,\alpha_n \} = \mr{span}_\Z \{ e_n, e_{n-1},\ldots,e_{n+1-m} \},
\]
$w$ must lie in $W(B_{n-m})$.

Write $\Delta' = \{ \alpha_1, \ldots, \alpha_{n-1-m} \}$ and $\Delta'' = \{\alpha_{n+1-m}, 
\ldots, \alpha_n \}$, two orthogonal sets of simple roots. The standard pseudo-Levi 
$K$-subgroup $\mc L_{\Delta' \cup \Delta''}$ of $\mc G$ contains $\mc L_I$ and $\mc L_J$. 
Decomposing its root system in irreducible components gives
\[
\mc L_{\Delta' \cup \Delta''} = \mc L_{\Delta'} \mc L_{\Delta''} .
\]
The index of $\mc L_{\Delta'}$ consists of $\Delta', \Delta' \cap \Delta_0$ and the trivial 
action of $\Gamma_K$. Here $\Delta'$ has type $A_{n-1-m}$ and by \cite[Lemma 15.5.8]{Spr} 
the subset $\Delta'_0 := \Delta' \cap \Delta_0$ is stable under the nontrivial automorphism of 
$A_{n-1-m}$. As shown in \cite[\S 3.3.2]{Tit}, this implies that there exists a divisor
$d$ of $n-m$ such that
\[
\Delta' \setminus \Delta'_0 = \Z d \cap [1,\ldots,n-1-m] . 
\]
With \cite[\S 17.1]{Spr} we see that $(\Delta',\Delta'_0,\mr{triv})$ is the index of an inner 
form $\mc H$ of $GL_n$. Explicitly, we can take $\mc H (\Q) = GL_{(n-m)/d}(D)$ where $D$ is a 
division algebra whose centre equals the ground field $\Q$. As maximal $\Q$-split torus
$\mc S^{\mc H}(\Q)$ we take the diagonal matrices with entries in $\Q^\times$.

The isomorphism class of the
Dynkin diagram $I' := I \cap \Delta$ determines the isomorphism class of the standard Levi
$\Q$-subgroup $\mc L_{I'}^{\mc H}$ of $\mc H$. Namely, $\mc L_{I'}^{\mc H}(\Q)$ is a direct
product of groups $GL_{n_j}(D)$, where $\sum_j n_j = (n-m)/d$ and $I'$ has connected components
of sizes $d n_j - 1$.

The set of simple roots $J' := J \cap \Delta'$ is associate to $I'$ by $w \in W(B_{n-m})$,
so isomorphic to $I'$ as Dynkin diagram. It follows that the standard Levi $\Q$-subgroups 
$\mc L_{I'}^{\mc H}$ and $\mc L_{J'}^{\mc H}$ of $\mc H$ are isomorphic. That is, 
$\mc L_{J'}^{\mc H}$ is also a direct product of the groups $GL_{n_j}(D)$, but maybe situated
in a different (standard) position inside $GL_{(n-m)/d}(D)$. With a permutation $w'$ from 
$S_{(n-m)/d}$ we can bring them in the same position. Then $n \mc L_{I'}^{\mc H} n^{-1} =
\mc L_{J'}^{\mc H}$ for some $n \in N_{\mc H (\Q}(\mc S^{\mc H}(\Q))$ and
$w' I' = J'$, where $w'$ is the image of $n$ in $W(\mc H,\mc S^{\mc H}) \cong S_{(n-m)/d}$.

As $W(\mc H,\mc S^{\mc H}) = W(\mc L_{\Delta'},\mc S)$, we conclude that $I'$ and $J'$ are
associate by an element of $W(\mc L_{\Delta'},\mc S) \subset W(\mc G,\mc S)$. 
Since $\Delta'$ and $\Delta''$ are orthogonal, $w'$ fixes $I \cap \Delta'' = J \cap \Delta''$
pointwise. Hence $w' I = J$ and by Lemma \ref{lem:2.2}.b $\mc L_I$ and $\mc L_J$ are
$\mc G (K)$-conjugate.
\end{proof}

\newpage

\end{document}